\documentclass[11pt]{article}
\usepackage{amsmath,amsthm,amssymb}
\usepackage[usenames,dvipsnames]{xcolor}
\usepackage{enumerate}
\usepackage{graphicx}
\usepackage{cite}
\usepackage{comment}
\usepackage{oands}
\usepackage{tikz}
\usepackage{changepage}
\usepackage{bbm}
\usepackage{mathtools}
\usepackage[margin=1in]{geometry} 
\usepackage{authblk}
\usepackage[pagewise,mathlines]{lineno}
\usepackage{appendix}
\usepackage{multicol}
\usepackage{stmaryrd} 

\usepackage[pdftitle={Introduction to the Liouville quantum gravity metric}, pdfauthor={Jian Ding, Julien Dub\'edat, Ewain Gwynne}, colorlinks=true,linkcolor=NavyBlue,urlcolor=RoyalBlue,citecolor=PineGreen,bookmarks=true,bookmarksopen=true,bookmarksopenlevel=2,unicode=true,linktocpage]{hyperref}

\setcounter{tocdepth}{2}






\theoremstyle{plain}
\newtheorem{thm}{Theorem}[section]

\newtheorem{lem}[thm]{Lemma}
\newtheorem{prop}[thm]{Proposition}

\def\@rst #1 #2other{#1}
\newcommand\MR[1]{\relax\ifhmode\unskip\spacefactor3000 \space\fi
  \MRhref{\expandafter\@rst #1 other}{#1}}
\newcommand{\MRhref}[2]{\href{http://www.ams.org/mathscinet-getitem?mr=#1}{MR#2}}


\theoremstyle{definition}
\newtheorem{defn}[thm]{Definition}
\newtheorem{remark}[thm]{Remark}
\newtheorem{prob}[thm]{Problem}

\numberwithin{equation}{section}

\newcommand{\dsb}{\begin{adjustwidth}{2.5em}{0pt}
\begin{footnotesize}}
\newcommand{\dse}{\end{footnotesize}
\end{adjustwidth}}

\newcommand{\ssb}{\begin{adjustwidth}{2.5em}{0pt}}
\newcommand{\sse}{\end{adjustwidth}}

\newcommand{\aryb}{\begin{eqnarray*}}
\newcommand{\arye}{\end{eqnarray*}}
\def\alb#1\ale{\begin{align*}#1\end{align*}}
\def\allb#1\alle{\begin{align}#1\end{align}}
\newcommand{\eqb}{\begin{equation}}
\newcommand{\eqe}{\end{equation}}
\newcommand{\eqbn}{\begin{equation*}}
\newcommand{\eqen}{\end{equation*}}

\newcommand{\BB}{\mathbbm}
\newcommand{\ol}{\overline}

\newcommand{\op}{\operatorname}

\newcommand{\frk}{\mathfrak}
\newcommand{\eqD}{\overset{d}{=}}
\newcommand{\ep}{\varepsilon}
\newcommand{\rta}{\rightarrow}

\newcommand{\wt}{\widetilde}
\newcommand{\wh}{\widehat} 
\newcommand{\mcl}{\mathcal}

\newcommand{\bdy}{\partial}

\newcommand{\ccM}{{\mathbf{c}_{\mathrm M}}}
\newcommand{\tr}{{\mathrm{tr}}}
   
\let\originalleft\left
\let\originalright\right
\renewcommand{\left}{\mathopen{}\mathclose\bgroup\originalleft}
\renewcommand{\right}{\aftergroup\egroup\originalright}

\title{Introduction to the Liouville quantum gravity metric}
\date{    }
\author{Jian Ding, Julien Dub\'edat, Ewain Gwynne}

\begin{document}

\maketitle

\begin{abstract}
Liouville quantum gravity (LQG) is a one-parameter family of models of random fractal surfaces which first appeared in the physics literature in the 1980s. Recent works have constructed a metric (distance function) on an LQG surface. We give an overview of the construction of this metric and discuss some of its most important properties, such as the behavior of geodesics and the KPZ formula. We also discuss some of the main techniques for proving statements about the LQG metric, give examples of their use, and discuss some open problems. 
\end{abstract}

\tableofcontents

\section{Introduction}
\label{sec-intro}

\emph{Liouville quantum gravity} (LQG) is a family of models of random ``surfaces", or equivalently random ``two-dimensional Riemannian manifolds" which are in some sense canonical. The reason for the quotations is that, as we will see, LQG surfaces are too rough to be Riemannian manifolds in the literal sense. Such surfaces were first studied in the physics literature in the 1980's~\cite{polyakov-qg1,david-conformal-gauge,dk-qg,kpz-scaling}. 
The purpose of this article is give an overview of the construction of the distance function associated with an LQG surface (Section~\ref{sec-metric}) as well as some of its properties (Section~\ref{sec-properties}) and the main tools used for studying it (Section~\ref{sec-proofs}). We also discuss some open problems in Section~\ref{sec-open-problems}. In the rest of this section, we will give some basic background on the theory of LQG and its motivations.
\bigskip

\noindent \textbf{Acknowledgments.} J.\ Ding was partially supported by NSF grants DMS-1757479 and DMS-1953848. J.\ Dub\'edat was partially supported by NSF grant DMS-1512853. E.G.\ was partially supported by a Clay research fellowship. 

\subsection{Definition of LQG}

One can define LQG surfaces with the topology of any orientable surface (disks, spheres, torii, etc.), and all have the same local geometry.
We will be primarily interested in the local geometry, so for simplicity we will focus on LQG surfaces with the topology of the whole plane.\footnote{See~\cite{wedges,dkrv-lqg-sphere,grv-higher-genus,drv-torus,remy-annulus} for constructions of canonical LQG surfaces with various topologies.} 
 
To define LQG, we first need to define the Gaussian free field. 
The whole-plane \emph{Gaussian free field} (GFF) is the centered Gaussian process $h$ with covariances\footnote{Our choice of covariance function corresponds to normalizing $h$ so that its average over the unit circle is zero; see, e.g.,~\cite[Section 2.1.1]{vargas-dozz-notes}.}
\eqbn
\op{Cov}(h(z) , h(w)) = G(z,w) := \log \frac{\max\{|z|,1\}  \max\{|w|,1\}}{|z-w|} ,\quad\forall z,w\in \BB C .
\eqen 
Since $\lim_{w\rta z} G(z,w) = \infty$, the GFF is not a function. However, it still makes sense as a generalized function (i.e., a distribution). That is, if $\phi : \BB C \rta\BB R$ is smooth and compactly supported, then one can define the $L^2$ inner product $(h,\phi) = \int_{\BB C} h(z) \phi(z) \,d^2 z$ as a random variable. These random variables have covariances
\eqbn
\op{Cov}\left( (h,\phi) , (h,\psi) \right) = \int_{\BB C  \times \BB C } \phi(z) \psi(w) G(z,w) \,d^2z\,d^2w .
\eqen
 The reader can consult~\cite{shef-gff,pw-gff-notes,bp-lqg-notes} for more background on the GFF.
We have included a simulation of the GFF in Figure~\ref{fig-planar-map}, left.

More generally, we say that a random generalized function $h$ on $\BB C$ is a \emph{GFF plus a nice function} if $h = \wt h + f$, where $\wt h$ is the whole-plane GFF and $f : \BB C\rta \BB R$ is a (possibly random and $\wt h$-dependent) function which is continuous except at finitely many points. 

Let $\gamma \in (0,2]$, which will be the parameter for our LQG surfaces. 
A \emph{$\gamma$-LQG surface} parametrized by $\BB C$ is the random two-dimensional Riemannian manifold with Riemannian metric tensor
\eqb \label{eqn-ddk}
  e^{\gamma h(z)} (dx^2+dy^2) , \quad \text{for} \quad z = x+iy
\eqe 
where $dx^2+dy^2$ denotes the Euclidean metric tensor and $h$ is the whole-plane GFF, or more generally a whole-plane GFF plus a nice function.  
 
\subsection{Area measure and conformal covariance}
\label{sec-measure}

The Riemannian metric tensor~\eqref{eqn-ddk} is not well-defined since $h$ is not defined pointwise, so $e^{\gamma h}$ does not make literal sense. However, it is possible to make sense of various objects associated with~\eqref{eqn-ddk} rigorously using regularization procedures. The idea is to consider a collection of continuous functions $\{h_\ep\}_{\ep > 0}$ which converge to $h$ in some sense as $\ep\rta 0$, define objects associated with the Riemannian metric tensor~\eqref{eqn-ddk} with $h_\ep$ in place of $h$, then take a limit as $\ep \rta 0$. In this paper, we will discuss two objects which can be constructed in this way: the LQG area measure (to be discussed just below) and the LQG metric (which is the main focus of the paper). Other examples include the LQG length measure on Schramm-Loewner evolution-type curves~\cite{shef-zipper,benoist-lqg-chaos}, Liouville Brownian motion~\cite{grv-lbm,berestycki-lbm}, and the correlation functions for the random ``fields" $e^{\alpha h}$ for $\alpha  \in \BB R$~\cite{krv-dozz}. 
 
For simplicity, let us restrict attention to the case when $h$ is a whole-plane GFF. 
A convenient choice of $\{h_\ep\}$ is the convolution of $h$ with the heat kernel. 
For $t  > 0$ and $z\in\BB C$, we define the heat kernel $p_t(z) := \frac{1}{2\pi t} e^{-|z|^2/2t}$ and we define
\eqb \label{eqn-gff-convolve}
h_\ep^*(z) := (h*p_{\ep^2/2})(z) = \int_{\BB C} h(w) p_{\ep^2/2} (z  - w) \, d^2w ,\quad \forall z\in \BB C  
\eqe
where the integral is interpreted in the sense of distributional pairing. 

The easiest non-trivial object associated with~\eqref{eqn-ddk} to construct rigorously is the LQG area measure, or volume form. This is a random measure $\mu_h$ on $\BB C$ which is defined as the a.s.\ limit, with respect to the vague topology,\footnote{In the case when $\gamma = 2$, there is a log correction in the scaling factor, see~\cite{shef-deriv-mart,shef-renormalization,powell-critical-lqg}.}
\eqb \label{eqn-lqg-measure}
\mu_h = \lim_{\ep \rta 0} \ep^{\gamma^2/2} e^{\gamma h_\ep^*} \,d^2 z ,
\eqe 
where $d^2 z$ denotes Lebesgue measure on $\BB C$.
The reason for the normalizing factor $\ep^{\gamma^2/2}$ is that $\BB E[e^{\gamma h_\ep^*(z)}] \approx \ep^{-\gamma^2/2}$. 
The existence of the limit in~\eqref{eqn-lqg-measure} is a special case of the theory of Gaussian multiplicative chaos (GMC)~\cite{kahane,rhodes-vargas-review}. There are a variety of different ways of approximating $\mu_h$ which are all known to converge to the same limit; see~\cite{shef-kpz,shamov-gmc} for some results in this direction. 

The measure $\mu_h$ is mutually singular with respect to Lebesgue measure. In fact, it is supported on a dense subset of $\BB C$ of Hausdorff dimension $2-\gamma^2/2$; see, e.g.,~\cite[Section 3.3]{shef-kpz}. However, it has no atoms and assigns positive mass to every open subset of $\BB C$. 

The LQG area measure also satisfies a conformal covariance property. Let $U , \wt U\subset \BB C$ be open and let $f : \wt U \rta U$ be a conformal (bijective, holomorphic) map. Let 
\eqb \label{eqn-Q}
\wt h = h\circ \phi  +Q\log |\phi'| , \quad \text{where} \quad Q = \frac{2}{\gamma} + \frac{\gamma}{2} .
\eqe
Then $\wt h$ is a random generalized function on $\wt U$ whose law is locally absolutely continuous with respect to the law of $h$, so $\mu_{\wt h}$ can be defined.  
It is shown in~\cite[Proposition 2.1]{shef-kpz} that a.s.\ 
\eqb \label{eqn-lqg-coord}
\mu_{\wt h}(X) = \mu_h(\phi(X)), \quad \text{$\forall$ Borel set $X\subset U$.} 
\eqe 
We can think of the pairs $(U, h|_U)$ and $(\wt U , \wt h)$ as representing two different parametrizations of the same LQG surface. The relation~\eqref{eqn-lqg-coord} implies that the LQG area measure is an intrinsic function of the surface, i.e., it does not depend on the choice of parametrization.

The main focus of this article is the \emph{LQG metric}, i.e., the Riemannian distance function associated with the Riemannian metric tensor~\eqref{eqn-ddk}. 
This metric can be constructed via a similar regularization procedure as the measure, but the proof of convergence is much more involved. See Section~\ref{sec-metric} for details.

\begin{figure}[ht!]
\begin{center}
\includegraphics[width=.3\textwidth]{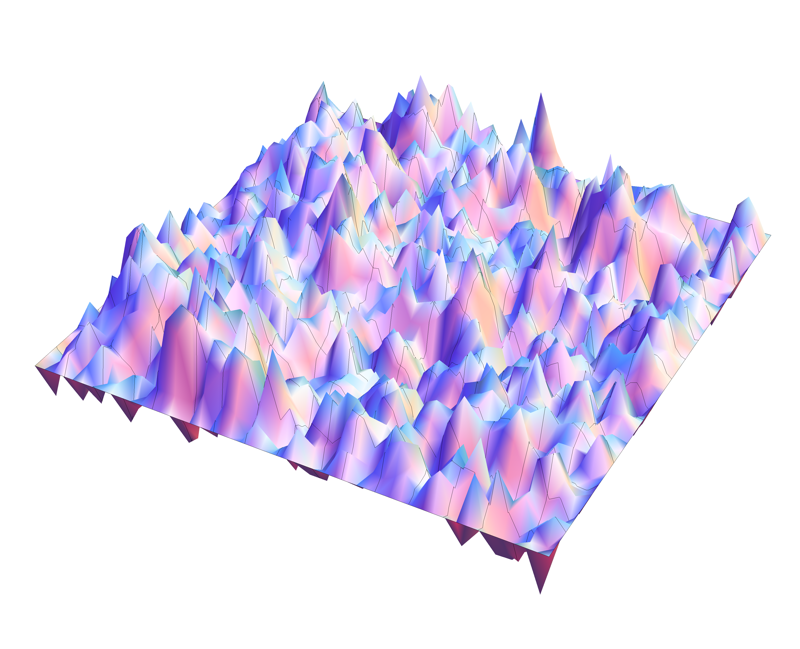}
\includegraphics[width=.6\textwidth]{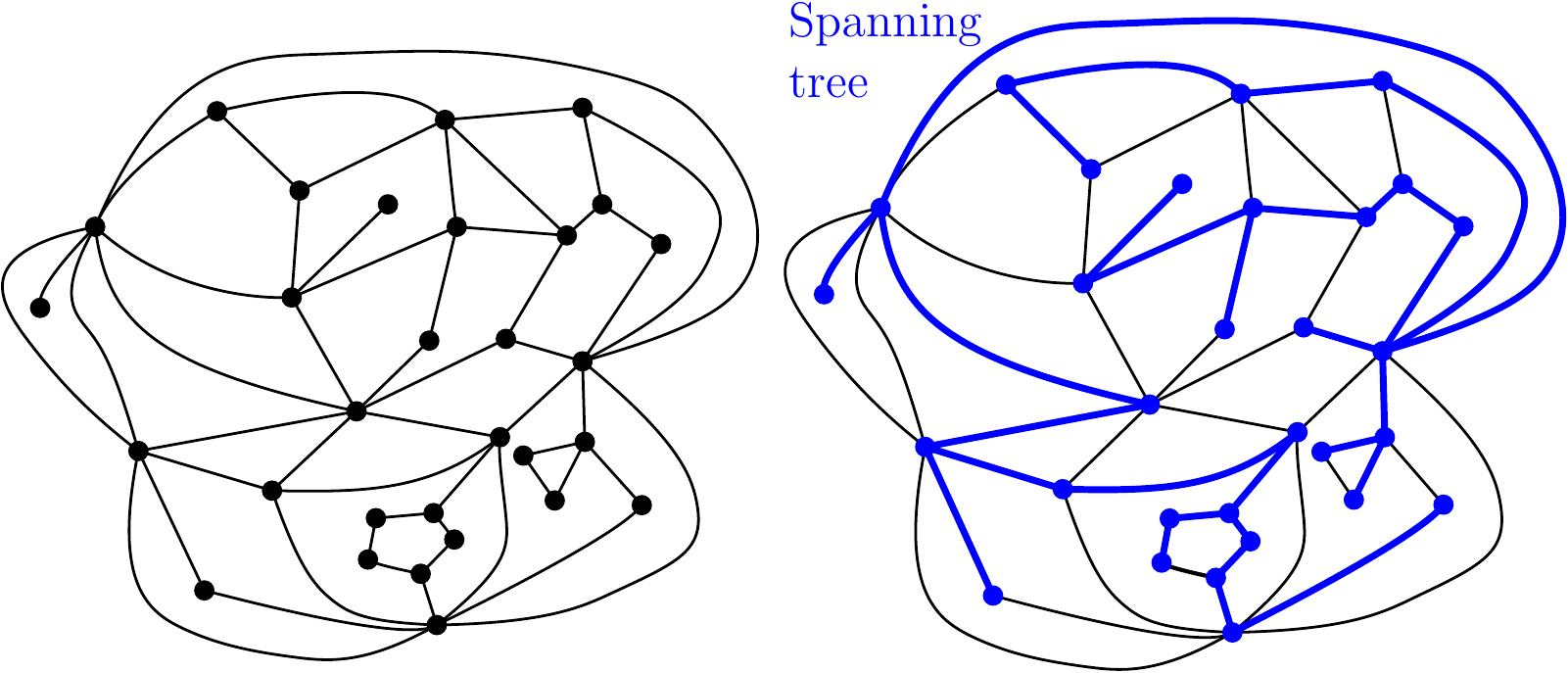} 
\caption{\label{fig-planar-map} \textbf{Left:} A simulation of the graph of a continuous function which approximates the GFF. \textbf{Middle.} A planar map. Equivalent representations of the same planar map can be obtained by applying an orientation-preserving homeomorphism from $\BB C$ to $\BB C$. \textbf{Right.} A spanning tree on the planar map. 
}
\end{center}
\end{figure}

\subsection{Motivation} \label{sec-motivation}
 
LQG was first studied by Polyakov~\cite{polyakov-qg1} in the 1980s in the context of string theory (we discuss Polyakov's motivation in Remark~\ref{remark-cc}). 
LQG is also of interest in conformal field theory since it is closely connected to Liouville conformal field theory, one of the simplest non-trivial conformal field theories. See~\cite{vargas-dozz-notes} for an overview of recent mathematical work on Liouville conformal field theory. 

One of the most important applications of LQG theory is the so-called Knizhnik-Polyakov-Zamolodchikov (KPZ) formula~\cite{kpz-scaling}, which gives a relationship between critical exponents for statistical mechanics models in random geometries and deterministic geometries.\footnote{The KPZ formula discussed here has no relation with Kardar-Parisi-Zhang equation from~\cite{kpz-fluctuation}, except that the initials of the authors for the two papers are the same.}
For example, this formula was used by Duplantier to give non-rigorous predictions for the Brownian intersection exponents~\cite{duplantier-bm-exponents} (the exponents were predicted earlier by Duplantier and Kwon~\cite{duplantier-kwon-brownian}). These predictions were later verified rigorously by Lawler, Schramm, and Werner in \cite{lsw-bm-exponents1,lsw-bm-exponents2,lsw-bm-exponents3} using SLE techniques. We discuss the KPZ formula in the context of the LQG metric in Section~\ref{sec-kpz}. 

Another reason to study LQG is that, at least conjecturally, it describes the large-scale behavior of discrete random geometries, such as random planar maps. A \emph{planar map} is a graph embedded in the plane so that no two edges cross, viewed modulo orientation-preserving homeomorphisms of the plane. See Figure~\ref{fig-planar-map}, middle, for an illustration.  
There are various interesting types of random planar maps, such as the following. 
\begin{itemize}
\item Uniform planar maps: consider the (finite) set of planar maps with a specified number $n\in\BB N$ of edges and choose an element of this set uniformly at random. 
\item Uniform planar maps with local constraints, such as triangulations (resp.\ quadrangulations), where each face has exactly 3 (resp.\ 4) edges. 
\item Decorated planar maps. Suppose, for example, that we want to sample a uniform pair $(M,T)$ consisting of a planar map $M$ with $n$ edges and a spanning tree $T$ on $M$ (i.e., a subgraph of $M$ which includes every vertex of $M$ and has no cycles). Under this probability measure, the marginal law of $M$ is not uniform; rather, the probability of seeing any particular planar map with $n$ edges is proportional to the number of spanning trees it admits. One can similarly consider planar maps decorated by statistical physics models (such as the Ising model or the FK model) or by various types of orientations on their edges.
\end{itemize}

It is believed that a large class of different types of planar maps converge to LQG in some sense. 
The parameter $\gamma$ depends on the type of planar map under consideration. Uniform planar maps, including maps with local constraints, correspond to $\gamma=\sqrt{8/3}$. This case is sometimes called ``pure gravity" in the physics literature. Other values of $\gamma$ correspond to planar maps decorated by statistical physics models. This case is sometimes called ``gravity coupled to matter". For example, the spanning tree-decorated maps discussed above are expected to converge to LQG with $\gamma=\sqrt 2$. 

For this article, the most relevant conjectured mode of convergence of random planar maps toward LQG is the following. View a planar map as a compact metric space, equipped with the graph distance. If we re-scale distances in this metric space appropriately, then as the number of edges tends to $\infty$ it should converge in the Gromov-Hausdorff sense to an LQG surface equipped with its LQG metric. So far, this type of convergence has only been proven for $\gamma=\sqrt{8/3}$, see Section~\ref{sec-lqg-tbm}. However, weaker connections between random planar maps and $\gamma$-LQG have been established rigorously for all $\gamma \in (0,2)$ using so-called \emph{mating of trees} theory. See~\cite{ghs-mating-survey} for a survey of this theory.

\section{Construction of the LQG metric}
\label{sec-metric}

\subsection{Liouville first passage percolation}
\label{sec-lfpp}

In analogy with the approximation scheme for the LQG measure in~\eqref{eqn-lqg-measure}, for a parameter $\xi > 0$, we define 
\eqb \label{eqn-lfpp}
D_{h}^\ep(z,w) := \inf_{P : z\rta w} \int_0^1 e^{\xi h_\ep^*(P(t))} |P'(t)| \,dt ,\quad \forall z,w\in\BB C ,\quad\forall \ep > 0 
\eqe
where the infimum is over all piecewise continuously differentiable paths $P : [0,1]\rta \BB C$ from $z$ to $w$. The metrics $D_h^\ep$ are sometimes referred to as $\ep$-\emph{Liouville first passage percolation} (LFPP). 

We want to choose the parameter $\xi$ in a manner depending on $\gamma$ so that the LFPP metrics~\eqref{eqn-lfpp} converge to the distance function associated with the metric tensor~\eqref{eqn-ddk}. To determine what $\xi$ should be, we use a heuristic scaling argument. From~\eqref{eqn-lqg-measure}, we see that scaling areas by $C  >0$ corresponds to replacing $h$ by $h + \frac{1}{\gamma} \log C$. On the other hand, from~\eqref{eqn-lfpp} we see that replacing $h$ by $h + \frac{1}{\gamma}\log C$ scales distances by a factor of $C^{\xi / \gamma}$. Hence $\xi/\gamma$ is the scaling exponent relating areas and distances. In other words, we want $\gamma / \xi  $ to be the ``dimension" of an LQG surface. 

It was shown in~\cite{dzz-heat-kernel,dg-lqg-dim} that there is an exponent $d_\gamma > 2$ which arises in various discrete approximations of LQG and which can be interpreted as the dimension of LQG. For example, $d_\gamma$ is the ball volume exponent for certain random planar maps~\cite[Theorem 1.6]{dg-lqg-dim}. Once the LQG metric has been constructed, one can show that $d_\gamma$ is its Hausdorff dimension~\cite{gp-kpz} (see Theorem~\ref{thm-dim}). The value of $d_\gamma$ is not known explicitly except that $d_{\sqrt{8/3}} = 4$. Computing $d_\gamma$ for general $\gamma \in (0,2]$ is one of the most important open problems in LQG theory. 

The above discussion suggests that one should take
\eqb \label{eqn-xi}
\xi = \frac{\gamma}{d_\gamma}. 
\eqe
It is shown in~\cite[Proposition 1.7]{dg-lqg-dim} that $\xi$ is an increasing function of $\gamma$, so for $\gamma \in (0,2]$, $\xi$ takes values in $(0,2/d_2]$. Estimates for $d_\gamma$~\cite{dg-lqg-dim,gp-lfpp-bounds} show that $2/d_2 \approx 0.41$. 

The definition of LFPP in~\eqref{eqn-lfpp} also makes sense for $\xi > 2/d_2$. In this regime, LFPP metrics do not correspond to $\gamma$-LQG with $\gamma\in(0,2]$. 
Rather, as we will explain in Section~\ref{sec-cc}, LFPP for $\xi  >2/d_2$ converges to a metric which is related to LQG with \emph{matter central charge} $\ccM \in (1,25)$, or equivalently $\gamma \in \BB C$ with $|\gamma|=2$. 

\begin{defn}
We refer to LFPP with $\xi < 2/d_2$, $\xi = 2/d_2$, and $\xi > 2/d_2$ as the \emph{subcritical}, \emph{critical}, and \emph{supercritical} phases, respectively.
\end{defn}

\begin{remark}  \label{remark-difficulty}
It is much more difficult to show the convergence of the approximating metrics~\eqref{eqn-lfpp} than it is to show the convergence of the approximating measures in~\eqref{eqn-lqg-measure}. One intuitive explanation for this is that the infimum in~\eqref{eqn-lfpp} introduces a substantial degree of non-linearity. The minimizing path in~\eqref{eqn-lfpp} depends on $\ep$, so one has to keep track of both the location of the minimizing path and its length, whereas for the measure one just has to keep track of the mass of a given set. One can think of the study of LFPP as the study of the extrema of the path-indexed random field whose value on each path is given by the integral in~\eqref{eqn-lfpp}.  
\end{remark}

\begin{remark} \label{remark-fpp}
The study of LFPP is very different from the study of ordinary first passage percolation (FPP), say on $\BB Z^2$. 
In ordinary FPP, the weights of the edges are i.i.d.\ and the law of the random environment is stationary with respect to spatial translations, neither of which are the case for LFPP (the law of the whole-plane GFF is only translation invariant modulo additive constant). 
However, for LFPP one has strong independence statements for the field at different Euclidean scales and one can get approximate spatial independence in certain contexts. See Sections~\ref{sec-annulus-iterate} and~\ref{sec-perc}. 
These independence properties are fundamental tools in the proof of the convergence of LFPP and the study of the limiting metric. 
\end{remark}

\begin{figure}[t!]
 \begin{center}
\includegraphics[width=0.4\textwidth]{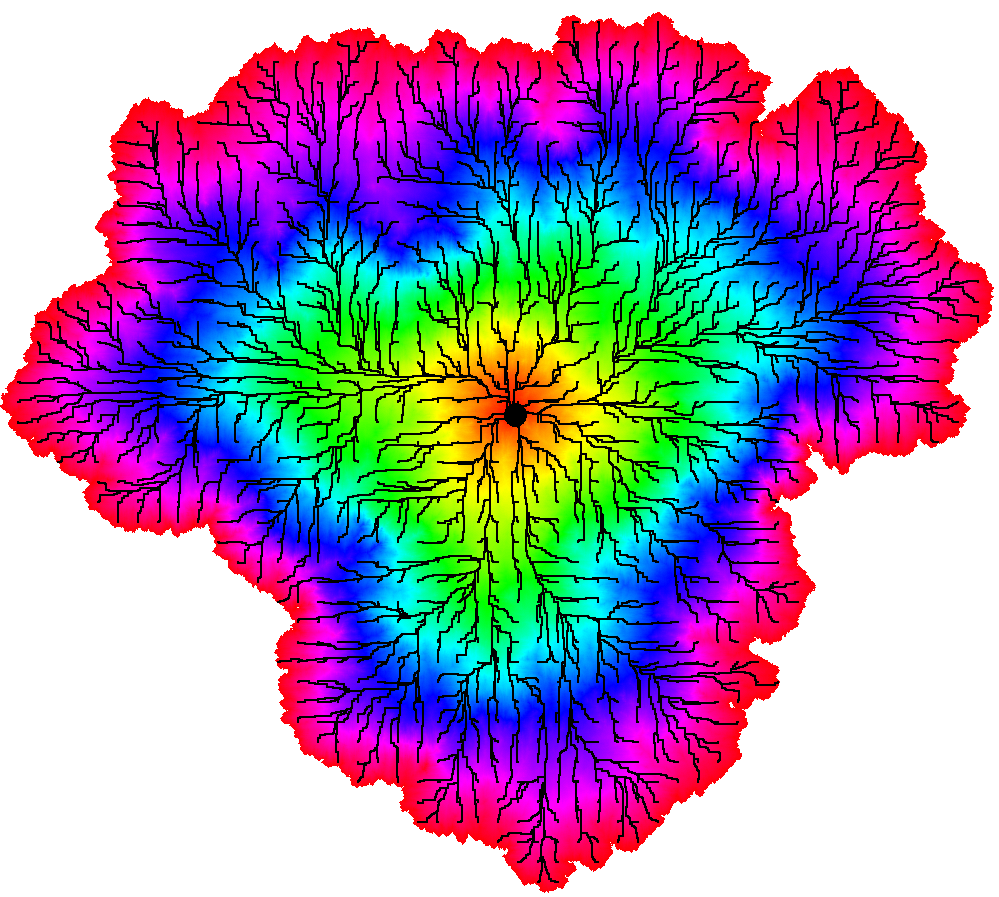} \hspace{0.05\textwidth}
\includegraphics[width=0.4\textwidth]{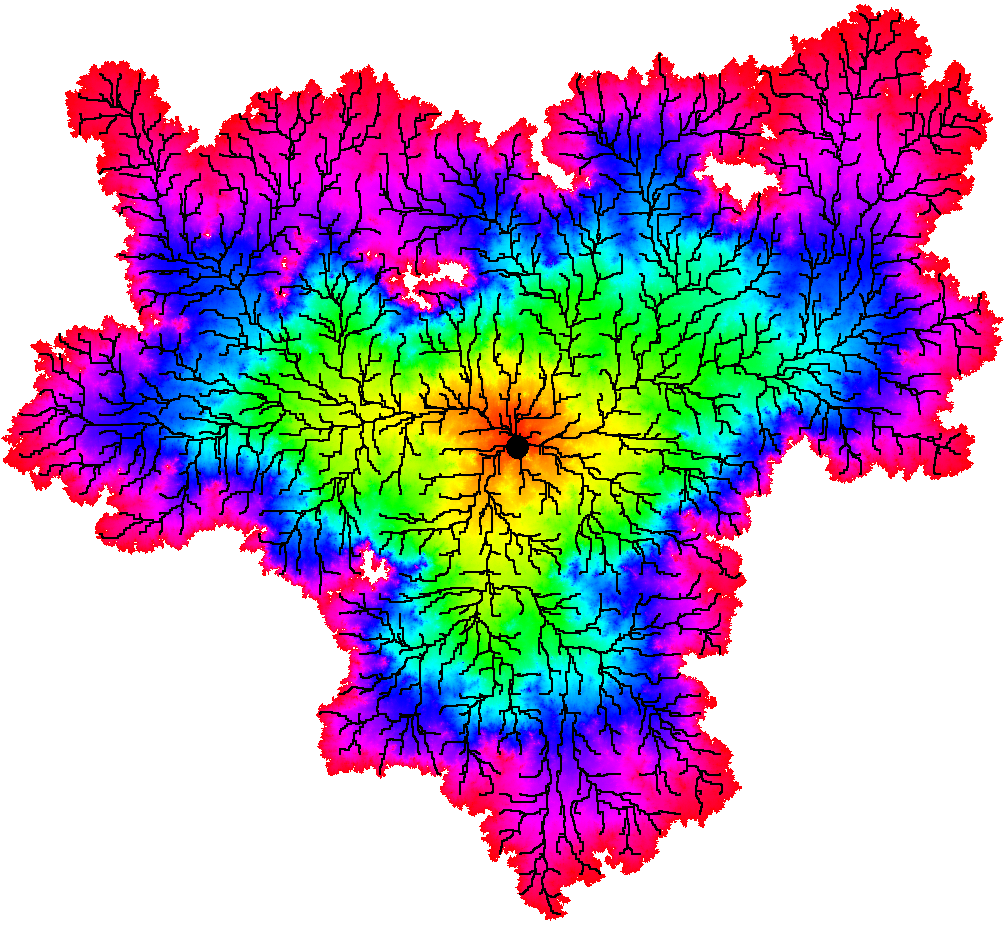} \\
\includegraphics[width=0.4\textwidth]{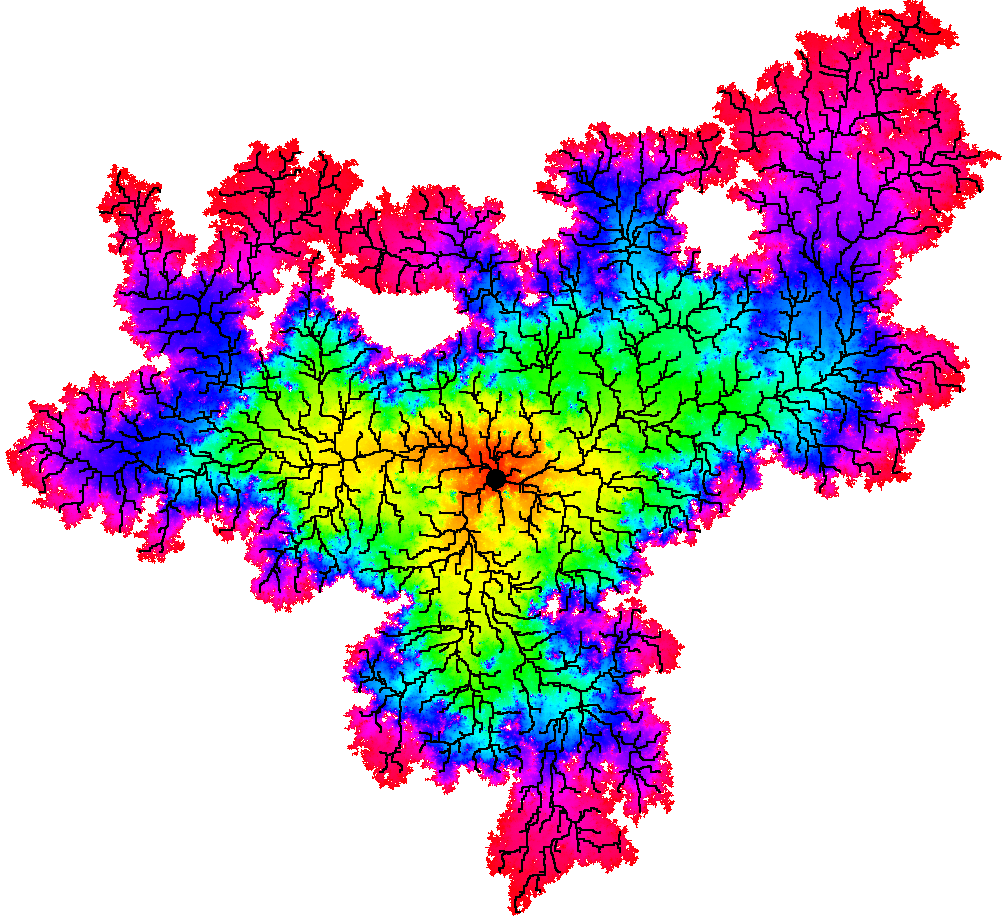} \hspace{0.05\textwidth}
\includegraphics[width=0.4\textwidth]{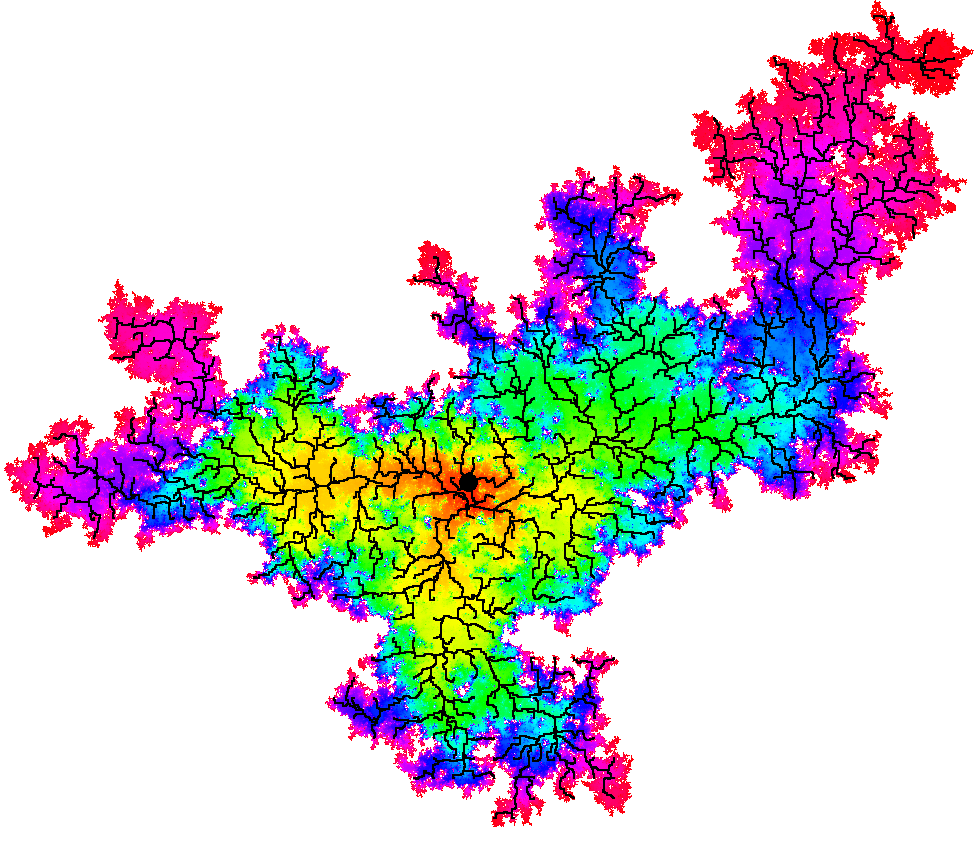}  
\vspace{-0.01\textheight}
\caption{Simulation of LFPP metric balls for $\xi = 0.2$ (top left), $\xi =  0.4$ (top right), $\xi = 0.6$ (bottom left), and $\xi = 0.8$ (bottom right). 
The values $\xi = 0.2,0.4$ are subcritical and correspond to $\gamma \approx 0.46 $ and $\gamma\approx 1.48$, respectively. The values $\xi = 0.6, 0.8$ are supercritical. 
The colors indicate distance to the center point (marked with a black dot) and the black curves are geodesics from the center point to other points in the ball. 
These geodesics have a tree-like structure, which is consistent with the confluence of geodesics results discussed in Section~\ref{sec-geodesic}. 
The pictures are slightly misleading in that the balls depicted do not have enough ``holes". In actuality, LQG metric balls have infinitely many complementary connected components for all $\xi > 0$, and have empty Euclidean interior for $\xi > 2/d_2$ (Section~\ref{sec-ball}). 
The simulation was produced using LFPP w.r.t.\ a discrete GFF on a $1024 \times 1024$ subset of $\BB Z^2$.  It is believed that this variant of LFPP falls into the same universality class as the variant in~\eqref{eqn-gff-convolve}. The geodesics go from the center of the metric ball to points in the intersection of the metric ball with the grid $20\BB Z^2$. The code for the simulation was provided by J.\ Miller.
}\label{fig-ball-sim}
\end{center}
\vspace{-1em}
\end{figure} 

\subsection{Convergence in the subcritical case}
\label{sec-subcritical}

\subsubsection{Tightness}
\label{sec-tight}

To extract a non-trivial limit of the metrics $D_h^\ep$, we need to re-normalize. We (somewhat arbitrarily) define our normalizing factor by
\eqb \label{eqn-gff-constant}
\frk a_\ep := \text{median of} \: \inf\left\{ \int_0^1 e^{\xi h_\ep^*(P(t))} |P'(t)| \,dt  : \text{$P$ is a left-right crossing of $[0,1]^2$} \right\} ,
\eqe  
where a left-right crossing of $[0,1]^2$ is a piecewise continuously differentiable path $P : [0,1]\rta [0,1]^2$ joining the left and right boundaries of $[0,1]^2$.
 
The value of $\frk a_\ep$ is not known explicitly (in contrast to the case of the LQG measure), but it is shown in~\cite[Proposition 1.1]{dg-supercritical-lfpp} that for each $\xi > 0$, there exists $Q = Q(\xi) > 0$ such that 
\eqb  \label{eqn-Q-def}
\frk a_\ep = \ep^{1 - \xi Q  + o_\ep(1)} ,\quad \text{as} \quad \ep \rta 0 . 
\eqe 
The existence of $Q$ is proven via a subadditivity argument, so the exact relationship between $Q$ and $\xi$ is not known. However, it is known that $Q \in (0,\infty)$ for all $\xi  > 0$, $Q$ is a continuous, non-increasing function of $\xi$, $\lim_{\xi\rta 0} Q(\xi) = \infty$, and $\lim_{\xi\rta\infty} Q(\xi) = 0$~\cite{dg-supercritical-lfpp,lfpp-pos}. See also~\cite{gp-lfpp-bounds,ang-discrete-lfpp} for bounds for $Q$ in terms of $\xi$. 

In the subcritical and critical cases, one has $\xi = \gamma/d_\gamma$ for some $\gamma \in (0,2]$ and 
\eqb \label{eqn-Q-subcrit}
Q(\gamma/d_\gamma) = \frac{2}{\gamma} + \frac{\gamma}{2} .
\eqe 
In other words, the value of $Q$ for LFPP is the same as the value of $Q$ appearing in the LQG coordinate change formula~\eqref{eqn-Q}. Furthermore, from~\eqref{eqn-Q-subcrit} we see that determining the relationship between $Q$ and $\xi$ in the subcritical case is equivalent to computing $d_\gamma$. 

The first major step in the construction of the LQG metric is to show that the re-scaled metrics $\frk a_\ep^{-1} D_h^\ep$ are tight, i.e., they admit subsequential limits in distribution. The first paper to prove a version of this was~\cite{ding-dunlap-lqg-fpp}, which showed that the metrics $\frk a_\ep^{-1} D_h^\ep$ are tight when $\xi$ is smaller than some non-explicit constant. The proof of this result was simplified in~\cite{df-lqg-metric}: most importantly,~\cite{df-lqg-metric} gave a simpler proof of the necessary RSW estimate (for all $\xi>0$) using a conformal invariance argument. Finally, tightness for the full subcritical regime $\xi\in (0,2/d_2)$ was proven in~\cite{dddf-lfpp}.

\begin{thm}[\!\!\cite{dddf-lfpp}] \label{thm-tight}
Assume that $\xi < 2/d_2$. The laws of the metrics $\{\frk a_\ep^{-1} D_h^\ep\}_{\ep > 0}$ are tight with respect to the topology of uniform convergence on compact subsets of $\BB C\times \BB C$. Every possible subsequential limit is a metric on $\BB C$ which induces the same topology as the Euclidean metric.
\end{thm}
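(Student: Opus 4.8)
The plan is to reduce tightness to a pair of quantitative, two-sided H\"older-type comparisons between $\frk a_\ep^{-1} D_h^\ep$ and the Euclidean metric that hold uniformly in $\ep$, and then to invoke an Arzel\`a--Ascoli-type compactness criterion for the space of metrics on $\BB C$ inducing the Euclidean topology. Concretely, for each compact $K \subset \BB C$ and each $\delta > 0$ I would try to produce exponents $\alpha,\beta \in (0,1)$ and, with probability at least $1-\delta$ uniformly in $\ep$, a finite random constant $C$ such that $\frk a_\ep^{-1} D_h^\ep(z,w) \le C|z-w|^\alpha$ and $|z-w| \le C(\frk a_\ep^{-1} D_h^\ep(z,w))^\beta$ for all $z,w \in K$. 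The first inequality gives uniform equicontinuity of the maps $(z,w)\mapsto \frk a_\ep^{-1} D_h^\ep(z,w)$ on $K\times K$ together with a uniform high-probability bound on their values, so Arzel\`a--Ascoli yields subsequential limits in the topology of uniform convergence on compact subsets of $\BB C\times\BB C$; the second inequality (``no shortcuts'') passes to any such limit $D$, and the two together show that $D$ is finite, symmetric, positive definite, and induces the Euclidean topology, which is precisely the claim about subsequential limits.

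To establish the two-sided H\"older bounds I would build on two ingredients. The first is the Russo--Seymour--Welsh estimate for LFPP proved in \cite{df-lqg-metric} for all $\xi > 0$ via a conformal invariance argument: the $D_h^\ep$-distance across a Euclidean rectangle of fixed modulus, and the $D_h^\ep$-distance around a fixed Euclidean annulus, are each comparable to $\frk a_\ep$ (times an appropriate power of the side length) with probability bounded below uniformly in $\ep$. Here one uses the approximate scale invariance of the mollified field $h_\ep^*$ and the fact that replacing $h$ by $h+c$ multiplies $D_h^\ep$ by $e^{\xi c}$. The second ingredient is the near-independence of $h_\ep^*$ across Euclidean scales, coming from the white-noise (or Markov) decomposition of the whole-plane GFF, together with Gaussian concentration for the logarithm of a crossing distance around its median. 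Concatenating RSW crossings across a dyadic hierarchy of scales upgrades ``with uniform positive probability'' to ``with probability close to $1$, uniformly in $\ep$'': chaining left-right crossing distances of a sequence of small squares along a candidate path gives the lower, no-shortcut bound (hence the Euclidean-H\"older estimate), while covering a small square by a nested family of around-annulus loops bounds its $D_h^\ep$-diameter and gives the upper bound (hence the $D_h^\ep$-H\"older estimate). Throughout, the a priori relation $\frk a_\ep = \ep^{1-\xi Q + o_\ep(1)}$ from \eqref{eqn-Q-def} and the approximate submultiplicativity of $\frk a_\ep$ used to prove it keep the normalization consistent from one dyadic scale to the next.

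The main obstacle is running this multiscale scheme uniformly over the entire subcritical range $\xi \in (0,2/d_2)$, rather than only for $\xi$ below some small constant as in \cite{ding-dunlap-lqg-fpp}. The difficulty is quantitative: as $\xi$ grows the exponential moments $\BB E[e^{p\xi h_\ep^*}]$ diverge for every $p>0$, so the crude tail bounds that control the normalized crossing distances for small $\xi$ no longer suffice, and one cannot sum crossing distances over many small squares (in either direction) without first knowing they are not atypically large. The resolution in \cite{dddf-lfpp} is a bootstrap: from a weak a priori polynomial bound on the moments (or tails) of the normalized rectangle-crossing distances, the concatenation-and-concentration step produces a strictly better bound, and iterating converges to a finite limit precisely when $\xi < 2/d_2$ (equivalently $Q > 2$). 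The no-shortcut bound, moreover, genuinely requires subcriticality --- in the supercritical phase the limiting object does not induce the Euclidean topology (metric balls have empty Euclidean interior) --- so this threshold is intrinsic rather than an artifact. By contrast the Arzel\`a--Ascoli step and the verification that the two-sided H\"older bounds survive in the limit are soft.
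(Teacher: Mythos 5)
The paper does not prove Theorem~\ref{thm-tight}; it states it as a cited result from \cite{dddf-lfpp} and gives only a one-paragraph historical summary of the approach (RSW for all $\xi>0$ from \cite{df-lqg-metric}, partial tightness from \cite{ding-dunlap-lqg-fpp}, full subcritical range from \cite{dddf-lfpp}). Your sketch is consistent with that summary and with the actual architecture of \cite{dddf-lfpp}: two-sided uniform-in-$\ep$ H\"older estimates obtained from RSW plus multiscale concatenation, Gaussian concentration for the logarithm of crossing distances via a white-noise/Markov decomposition of the field, a bootstrap on moment bounds that closes on the range $Q>2$ (equivalently $\xi<2/d_2$), and a soft Arzel\`a--Ascoli extraction together with passage of the two-sided bounds to the limit.

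One small imprecision worth fixing: you write that as $\xi$ grows ``the exponential moments $\BB E[e^{p\xi h_\ep^*}]$ diverge for every $p>0$.'' For fixed $\ep>0$ these are moments of a Gaussian and are finite for every $p$ and $\xi$. The actual obstruction is that $\op{Var}(h_\ep^*(z))\sim\log\ep^{-1}$, so $\BB E[e^{p\xi h_\ep^*(z)}]\asymp\ep^{-p^2\xi^2/2}$ blows up polynomially as $\ep\to0$, and consequently the normalized crossing distances have only finitely many finite positive moments, with the number of controllable moments shrinking as $\xi$ increases. This is exactly why the crude tail bounds that suffice for small $\xi$ in \cite{ding-dunlap-lqg-fpp} fail to cover the whole subcritical range, and why the bootstrap of \cite{dddf-lfpp} is designed to extract polynomial tail bounds from RSW-plus-concentration input without assuming any exponential integrability.
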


Although the subsequential limit induces the same topology as the Euclidean metric, its geometric properties are very different. See Figure~\ref{fig-ball-sim} and Section~\ref{sec-properties}.

\subsubsection{Uniqueness}
\label{sec-unique}

The second major step is to show that the subsequential limit is unique. In fact, we want a stronger statement than just the uniqueness of the subsequential limit, since we would like to say that the limiting metric does not depend on the approximation procedure. To this end, the paper~\cite{gm-uniqueness} established an axiomatic characterization of the LQG metric. To state this characterization, we need some preliminary definitions. 

Let $\frk d$ be a metric on $\BB C$. For a path $P : [a,b]\rta \BB C$, we define its $\frk d$-length by
\eqb \label{eqn-length}
\op{len}(P;\frk d) :=  \sup_{T} \sum_{i=1}^{\# T} \frk d(P(t_i) , P(t_{i-1})) 
\eqe 
where the supremum is over all partitions $T : a= t_0 < \dots < t_{\# T} = b$ of $[a,b]$. We say that $\frk d$ is a \emph{length metric} if for each $z,w\in\BB C$, $\frk d(z,w)$ is equal to the infimum of the $\frk d$-lengths of all paths joining $z$ and $w$.

For an open set $U\subset \BB C$, we define the \emph{internal metric} of $\frk d$ on $U$ by  
\eqb \label{eqn-internal-metric}
\frk d(z,w;U) = \inf\left\{\op{len}(P;\frk d) : \text{$P$ is a path from $z$ to $w$ in $U$} \right\} ,\quad \forall z,w\in U . 
\eqe 
We note that $\frk d(z,w;U)$ can be strictly larger than the $\frk d(z,w)$ since all of the paths from $z$ to $w$ of near-minimal $\frk d$-length might exit $U$. 

The following is the axiomatic definition of the LQG metric from~\cite{gm-uniqueness}.  

\begin{defn}[LQG metric]
\label{def-metric}
Let $\mcl D'$ be the space of distributions (generalized functions) on $\BB C$, equipped with the usual weak topology.\footnote{
We do not care about how $D_h$ is defined on any subset of $\mcl D'$ which has measure zero for the law of any random distribution which is a GFF plus a continuous function. }   
For $\gamma\in (0,2)$, a \emph{$\gamma$-LQG metric} is a measurable functions $h\mapsto D_h$ from $\mcl D'$ to the space of metrics on $\BB C $ which induce the Euclidean topology with the following properties. Let $h$ be a GFF plus a continuous function on $\BB C$: i.e., $h = \wt h  +f$ where $\wt h$ is a whole-plane GFF and $f$ is a possibly random continuous function. Then the associated metric $D_h$ satisfies the following axioms. 
\begin{enumerate}[I.]
\item \textbf{Length space.} Almost surely, $D_h$ is a length metric. \label{item-metric-length} 
\item \textbf{Locality.} Let $U\subset\BB C$ be a deterministic open set. 
The $D_h$-internal metric $D_h(\cdot,\cdot ; U)$ is a.s.\ given by a measurable function of $h|_U$.  \label{item-metric-local}
\item \textbf{Weyl scaling.} Let $\xi$ be as in~\eqref{eqn-xi}. For a continuous function $f : \BB C \rta \BB R$, define
\eqb \label{eqn-metric-f}
(e^{\xi f} \cdot D_h) (z,w) := \inf_{P : z\rta w} \int_0^{\op{len}(P ; D_h)} e^{\xi f(P(t))} \,dt , \quad \forall z,w\in \BB C ,
\eqe 
where the infimum is over all $D_h$-continuous paths from $z$ to $w$ in $\BB C$ parametrized by $D_h$-length.
Then a.s.\ $ e^{\xi f} \cdot D_h = D_{h+f}$ for every continuous function $f: \BB C \rta \BB R$. \label{item-metric-f}
\item \textbf{Coordinate change for scaling and translation.} Let $r > 0$ and $z\in\BB C$. Almost surely, \label{item-metric-coord}
\eqbn
D_h(r u + z, rv + z) = D_{h(r\cdot+z)+Q\log r}(u,v) ,\quad\forall u,v\in\BB C ,\quad \text{where} \quad Q = \frac{2}{\gamma} + \frac{\gamma}{2} .
\eqen
\end{enumerate}
\end{defn}

The reason why we impose Axioms~\ref{item-metric-length} through~\ref{item-metric-f} is that we want $D_h$ to be the Riemannian distance function associated to the Riemannian metric tensor~\eqref{eqn-ddk}. Axiom~\ref{item-metric-coord} is analogous to the conformal coordinate change formula for the LQG area measure~\eqref{eqn-lqg-coord}, but restricted to translations and scalings. As in the case of the measure, it can be thought of as saying that the metric $D_h$ is intrinsic to the LQG surface, i.e., it does not depend on the choice of parametrization. The axioms in Definition~\ref{def-metric} imply a coordinate change formula for general conformal maps, including rotations; see~\cite[Remark 1.6]{gm-uniqueness} and~\cite{gm-coord-change}. 
 
The main result of~\cite{gm-uniqueness} is the following statement, whose proof builds on~\cite{dddf-lfpp,lqg-metric-estimates,local-metrics,gm-confluence}. 

\begin{thm}[\!\!\cite{gm-uniqueness}] \label{thm-unique}
For each $\gamma\in (0,2)$, there exists a $\gamma$-LQG metric. This metric is the limit of the re-scaled LFPP metrics $\frk a_\ep^{-1} D_h^\ep$ in probability w.r.t.\ the topology of uniform convergence on compact subsets of $\BB C\times\BB C$. Moreover, this metric is unique in the following sense: if $D_h$ and $\wt D_h$ are two $\gamma$-LQG metrics, then there is a deterministic constant $C>0$ such that a.s.\ $D_h(z,w) = C \wt D_h(z,w)$ for all $z,w\in\BB C$ whenever $h$ is a whole-plane GFF plus a continuous function.
\end{thm}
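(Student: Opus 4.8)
The plan is to prove the statement in two logically independent halves — \emph{existence} of a $\gamma$-LQG metric, and \emph{uniqueness} up to a multiplicative constant — and then combine them with the choice of normalization $\frk a_\ep$ to get convergence in probability. For existence, I would start from Theorem~\ref{thm-tight}: pick any subsequential limit $D_h$ of $\frk a_\ep^{-1} D_h^\ep$ (in distribution, then a.s.\ uniformly on compacts after a Skorokhod coupling) and verify that it satisfies Axioms~\ref{item-metric-length}--\ref{item-metric-coord}. Length space: a uniform-on-compacts limit of length metrics inducing the Euclidean topology is again such a length metric. Locality: $D_h^\ep(\cdot,\cdot;U)$ is determined by $h_\ep^*$ on a shrinking neighborhood of $U$ and $h_\ep^*\to h$, so the property passes to the limit. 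Weyl scaling: from~\eqref{eqn-lfpp}, $D_{h+f}^\ep$ and $e^{\xi f}\cdot D_h^\ep$ differ only through $(h+f)_\ep^* - h_\ep^* - f$, which vanishes since $f$ is continuous. Coordinate change: heat-kernel LFPP is only approximately scale covariant, but the discrepancy is absorbed by the normalization because $\frk a_\ep=\ep^{1-\xi Q+o_\ep(1)}$ by~\eqref{eqn-Q-def}, which produces exactly the $Q\log r$ term. These verifications are essentially contained in~\cite{lqg-metric-estimates,local-metrics}.

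For uniqueness, let $D_h$ and $\wt D_h$ be two $\gamma$-LQG metrics. First I would reduce to the case where $h$ is a whole-plane GFF: by Axiom~\ref{item-metric-local} and local absolute continuity of ``GFF plus a continuous function'' with respect to the whole-plane GFF (together with an exhaustion of $\BB C$ by bounded domains), a proportionality relation for the pure GFF transfers to the general case. Next, using the regularity estimates available for any $\gamma$-LQG metric (Hölder comparison with the Euclidean metric and polynomial moment bounds on crossing distances), $D_h$ and $\wt D_h$ are a.s.\ bi-Lipschitz equivalent, so the optimal constants
\eqbn
C_* := \inf\{C>0 : D_h\le C\wt D_h \text{ a.s.}\}, \qquad c_* := \sup\{c>0: D_h \ge c \wt D_h \text{ a.s.}\}
\eqen
lie in $(0,\infty)$ with $c_*\le C_*$. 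They are deterministic: the events $\{D_h\le C\wt D_h\}$ and $\{D_h\ge c\wt D_h\}$ are invariant under the scaling/translation coordinate change (Axiom~\ref{item-metric-coord}) and under adding a continuous function (Axiom~\ref{item-metric-f}), so the pointwise best constants are measurable with respect to a $\sigma$-algebra on which the whole-plane GFF is trivial, hence a.s.\ constant. Thus a.s.\ $c_*\wt D_h \le D_h \le C_*\wt D_h$.

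The heart of the argument is to show $C_*=c_*$. Suppose not, say $C_*>c_*$, and fix a small $\delta>0$. Since the two metrics are not identically proportional, a quantitative estimate gives $p>0$ and a Euclidean annulus $A$ such that, with probability at least $p$, every path separating the two boundary circles of $A$ has $D_h$-length at most $(C_*-\delta)$ times the $\wt D_h$-distance across $A$. Now decompose a very large annulus into $N$ disjoint concentric rescaled copies of $A$; using the near-independence of the GFF across disjoint scales (Remark~\ref{remark-fpp}), a law-of-large-numbers argument shows that a fraction $\approx p$ of these sub-annuli are ``good'' in the above sense. On the good sub-annuli one surgers the $\wt D_h$-geodesic that crosses the big annulus — replacing its excursion across a good sub-annulus by a $D_h$-near-geodesic crossing of that sub-annulus — to produce a crossing whose $D_h$-length is at most $\bigl((1-p)C_* + p(C_*-\delta)+o(1)\bigr)$ times the $\wt D_h$-crossing distance. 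Chaining such annuli along an arbitrary $\wt D_h$-geodesic then upgrades this to $D_h\le (C_*-p\delta/2)\wt D_h$ globally, contradicting the optimality of $C_*$. Hence $C_*=c_*=:C$ and $D_h=C\wt D_h$ a.s. The step I expect to be the main obstacle is precisely this surgery: one must verify that the portion of the $\wt D_h$-geodesic inside a good sub-annulus is itself close to an optimal $\wt D_h$-crossing of that sub-annulus (so the local gain is not wasted) and that the gains over different scales accumulate without compounding errors — this is exactly where confluence of geodesics~\cite{gm-confluence} and the cross-scale independence of the GFF are indispensable.

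Finally, to conclude that $\frk a_\ep^{-1}D_h^\ep$ converges in probability to a fixed $\gamma$-LQG metric: every subsequential limit is a $\gamma$-LQG metric, hence equals a fixed one up to a constant by the uniqueness just proved, and the normalization~\eqref{eqn-gff-constant} pins this constant down identically (the median left-right crossing distance of $[0,1]^2$ equals $1$ for every subsequential limit). Since the $\frk a_\ep^{-1}D_h^\ep$ are measurable functions of the single field $h$, are tight, and have a unique subsequential limit in law, standard arguments promote this to convergence in probability. This is the content of~\cite{gm-uniqueness}.
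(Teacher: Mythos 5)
There is a genuine gap in your existence step. You assert that the coordinate-change axiom (Axiom~\ref{item-metric-coord}) can be verified directly for a subsequential limit of $\frk a_\ep^{-1}D_h^\ep$, with the ``$o_\ep(1)$ discrepancy absorbed by the normalization.'' This is precisely the point where the direct approach breaks down, and it is the central difficulty of the whole construction. Rescaling space changes $\ep$: one has $D_h^\ep(rz,rw) = r\,D_{h(r\cdot)}^{\ep/r}(z,w)$, so comparing $\frk a_\ep^{-1}D_h^\ep$ with $\frk a_{\ep/r}^{-1}D_{h(r\cdot)}^{\ep/r}$ involves the ratio $\frk a_{\ep/r}/\frk a_\ep$. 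By~\eqref{eqn-Q-def} this is $r^{\xi Q - 1 + o_\ep(1)}$, but the $o_\ep(1)$ is an unquantified error that could fluctuate as $\ep\to 0$; and since one only has a \emph{subsequential} limit, different values of $r$ would require passing to different subsequences, so one cannot conclude that the chosen subsequential limit satisfies any exact scaling relation. The paper (Section~\ref{sec-weak}) explicitly flags this as the reason Axiom~\ref{item-metric-coord} cannot be checked directly. It circumvents the problem by introducing \emph{weak} LQG metrics (Definition~\ref{def-weak}), where Axiom~\ref{item-metric-coord} is replaced by translation invariance and tightness across scales with unknown scaling constants $\frk c_r$ — properties which \emph{do} pass to subsequential limits — and then proving Theorem~\ref{thm-weak-uniqueness}: uniqueness within the class of weak LQG metrics sharing the same $\frk c_r$. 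Axiom~\ref{item-metric-coord} is then \emph{deduced} from that uniqueness statement (by observing that $D_{h(b\cdot)+Q\log b}(\cdot/b,\cdot/b)$ is a weak LQG metric with the same $\frk c_r$, hence proportional to $D_h$ with constant $1$), rather than verified directly.

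Your uniqueness sketch — bi-Lipschitz equivalence, a $0$--$1$ argument making the optimal constants $c_*, C_*$ deterministic, and a cross-scale surgery using confluence of geodesics to force $C_* = c_*$ — is broadly in the spirit of~\cite{gm-uniqueness}. But as written it invokes Axiom~\ref{item-metric-coord}, which is unavailable at the stage of the argument where it is needed (existence has not yet been established). To make the structure close, the uniqueness argument has to be run for weak LQG metrics, replacing each use of exact coordinate change by the pair of axioms IV$'$ and V$'$ and carrying the scaling constants $\frk c_r$ through the estimates; this is exactly why Theorem~\ref{thm-weak-uniqueness} is stated for weak metrics with fixed $\frk c_r$. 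A secondary issue: ``tight $+$ unique subsequential law $+$ all functions of the same field $h$ implies convergence in probability'' is not a standard implication in general (uniqueness of the limiting law does not by itself control joint behavior of different $\ep$'s), so that final step also needs a genuine argument rather than a one-liner.
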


Due to Theorem~\ref{thm-unique}, we can refer to \emph{the} LQG metric, keeping in mind that this metric is only defined up to a deterministic positive multiplicative constant (the value of this constant is usually unimportant). 

Once Theorem~\ref{thm-unique} is established, it is typically easier to prove statements about the LQG metric directly from the axioms, as opposed to going back to the approximation procedure. We explain some of the techniques for doing so in Section~\ref{sec-proofs}.

\subsubsection{Weak LQG metrics}
\label{sec-weak}


The existence part of Theorem~\ref{thm-unique} of course follows from the tightness result in Theorem~\ref{thm-tight}, but not as directly as one might expect at first glance.  
It is relatively easy to check from the definition~\eqref{eqn-lfpp} that every possible subsequential limit of the re-scaled LFPP metrics $\frk a_\ep^{-1} D_h^\ep$ satisfies Axioms~\ref{item-metric-length}, \ref{item-metric-local}, and~\ref{item-metric-f} in Definition~\ref{def-metric}. See~\cite[Section 2]{lqg-metric-estimates} for details. 

Checking Axiom~\ref{item-metric-coord} is much more difficult. The reason is that re-scaling space changes the value of $\ep$ in~\eqref{eqn-lfpp}: for $\ep ,r>0$, one has~\cite[Lemma 2.6]{lqg-metric-estimates}
\eqbn
D_h^\ep(rz, rw) = r D_{h(r\cdot)}^{\ep/r}(z,w)  ,\quad \forall z,w\in \BB C.
\eqen
So, since we only have subsequential limits of $\frk a_\ep^{-1} D_h^\ep$, we cannot deduce that the subsequential limit satisfies an exact spatial scaling property. 

To get around this difficulty, we consider a weaker property than Axiom~\ref{item-metric-coord} which is sufficient for the proof of uniqueness. To motivate this property, let us consider how Axiom~\ref{item-metric-coord} is used in proofs about the LQG metric.
 
Assume that $h$ is a whole-plane GFF. For $z\in\BB C$ and $r>0$, let $h_r(z)$ be the average of $h$ over the circle $\bdy B_r(z)$ (see~\cite[Section 3.1]{shef-kpz} for the definition and basic properties of the circle average process). It is easy to see from the definition of the whole-plane GFF that for any $z\in\BB C$ and $r>0$,  
\eqb \label{eqn-gff-law}
h(r\cdot+z) - h_r(z) \eqD h .
\eqe 
Furthermore, from Weyl scaling and the LQG coordinate change formula (Axioms~\ref{item-metric-f} and~\ref{item-metric-coord}), a.s.\ 
\eqb \label{eqn-metric-scale}
D_{h(r\cdot+z) - h_r(z)}(u,v) = e^{-\xi h_r(z)} r^{-\xi Q} D_h(ru + z , rv + z) ,\quad\forall u ,v \in \BB C.
\eqe  
By~\eqref{eqn-gff-law} and~\eqref{eqn-metric-scale}  , 
\eqb \label{eqn-metric-law}
e^{-\xi h_r(z)} r^{-\xi Q} D_h(r\cdot + z , r \cdot + z)\eqD D_h .
\eqe
The relation~\eqref{eqn-metric-law} allows us to get estimates for $D_h$ which are uniform across different spatial locations and Euclidean scales. However, for many purposes one does not need an exact equality in law in~\eqref{eqn-metric-law}, but rather just an up-to-constants comparison. This motivates the following definition.

\begin{defn}[Weak LQG metric] \label{def-weak}
For $\gamma\in (0,2)$, a \emph{weak $\gamma$-LQG metric} is a measurable functions $h\mapsto D_h$ from $\mcl D'$ to the space of metrics on $\BB C $ which induce the Euclidean topology which satisfies Axioms~\ref{item-metric-length}, \ref{item-metric-local}, and~\ref{item-metric-f} in Definition~\ref{def-metric} plus the following further axioms.
\begin{enumerate}
\item[VI$'$.] \textbf{Translation invariance.} If $h$ is a whole-plane GFF, then for each fixed deterministic $z \in \BB C$, a.s.\ $D_{h(\cdot + z)} = D_h(\cdot + z , \cdot+z)$.  \label{item-metric-translate}
\item[V$'$.] \textbf{Tightness across scales.}  Suppose $h$ is a whole-plane GFF and for $z\in\BB C$ and $r>0$ let $h_r(z)$ be the average of $h$ over the circle $\bdy B_r(z)$. For each $r > 0$, there is a deterministic constant $\frk c_r > 0$ such that the set of laws of the metrics $\frk c_r^{-1} e^{-\xi h_r(0)} D_h (r \cdot , r\cdot)$ for $r > 0$ is tight (w.r.t.\ the local uniform topology). Furthermore, every subsequential limit of the laws of the metrics $\frk c_r^{-1} e^{-\xi h_r(0)} D_h (r \cdot  , r \cdot )$ is supported on metrics which induce the Euclidean topology on $\BB C$. 
\end{enumerate}
\end{defn}

From~\eqref{eqn-metric-law}, we see that every strong LQG metric is a weak LQG metric with $\frk c_r = r^{\xi Q}$. 
Furthermore, it is straightforward to check that every subsequential limit of LFPP is a weak LQG metric~\cite{lqg-metric-estimates}. 
In particular, Theorem~\ref{thm-tight} implies that there exists a weak LQG metric for each $\gamma\in(0,2)$. 
We note that most literature requires rather weak a priori bounds for the scaling constants $\frk c_r$ in Definition~\ref{def-weak}, but the recent paper~\cite{dg-polylog} shows that these bounds are unnecessary. 

It turns out that most statements which can be proven for LQG metrics can also be proven for weak LQG metrics. 
Using this,~\cite{gm-uniqueness} established the following statement.

\begin{thm}[Uniqueness of weak LQG metrics] \label{thm-weak-uniqueness}
Let $\gamma \in (0,2)$ and let $D_h$ and $\wt D_h$ be two weak $\gamma$-LQG metrics which have the \emph{same} values of $\frk c_r$ in Definition~\ref{def-weak}.
There is a deterministic constant $C > 0$ such that if $h$ is a whole-plane GFF plus a continuous function, then a.s.\ $D_h = C \wt D_h$. 
\end{thm}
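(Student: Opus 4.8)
The plan is to follow the strategy of~\cite{gm-uniqueness}: first show that any two weak $\gamma$-LQG metrics with the same scaling constants are bi-Lipschitz equivalent with a \emph{deterministic} optimal ratio, and then run a bootstrap argument showing that this ratio cannot be a nondegenerate interval, so the metrics must be exactly proportional. \textbf{Step 1 (a priori bi-Lipschitz equivalence).} Since $D_h$ and $\wt D_h$ are weak $\gamma$-LQG metrics with the \emph{same} constants $\frk c_r$, the a priori estimates available for weak LQG metrics (moment bounds for point-to-point distances and for distances across Euclidean annuli, and the fact that each metric induces the Euclidean topology, as developed in~\cite{lqg-metric-estimates,local-metrics,dddf-lfpp}) apply to the two metrics with matching normalization at every scale. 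Combining these with the locality and Weyl scaling axioms (Axioms~\ref{item-metric-local} and~\ref{item-metric-f}) and a union bound over dyadic scales, one obtains a deterministic $C_0 \ge 1$ with $C_0^{-1}\wt D_h \le D_h \le C_0 \wt D_h$ a.s.\ when $h$ is a whole-plane GFF; by Weyl scaling and local absolute continuity this extends to $h$ a GFF plus a continuous function.

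\textbf{Step 2 (the optimal constants are deterministic).} Set
\[
c_* := \inf_{z \ne w}\frac{D_h(z,w)}{\wt D_h(z,w)} , \qquad c^* := \sup_{z\ne w}\frac{D_h(z,w)}{\wt D_h(z,w)} ,
\]
so $C_0^{-1}\le c_*\le c^*\le C_0$ by Step 1. The ratio $D_h(z,w)/\wt D_h(z,w)$ is unchanged under adding a deterministic constant to $h$ (both metrics scale by the same $e^{\xi c}$ by Axiom~\ref{item-metric-f}) and is covariant under spatial translation (Axiom~VI$'$), so $c_*$ and $c^*$ are translation-invariant functionals of the whole-plane GFF viewed modulo additive constant. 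By the ergodicity of this field under translations (equivalently, a $0$--$1$ law), $c_*$ and $c^*$ are a.s.\ equal to deterministic constants. It now suffices to prove $c_* = c^*$, since then $D_h = c_*\wt D_h$ a.s., which is the claim with $C = c_*$.

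\textbf{Step 3 (local improvement).} Suppose, for contradiction, that $c_* < c^*$. Then $\{(z,w): D_h(z,w) > c_*\wt D_h(z,w)\}$ is nonempty, i.e.\ somewhere the lower bound has genuine slack. The key point is to turn this into a \emph{quantitative, localizable} improvement: there exist $\epsilon, p_0 > 0$ and, for every Euclidean square $S$, an event $G_S$ depending only on $h|_S$ with $\mathbb P[G_S] \ge p_0$ (uniformly over the scale of $S$, using the scale-covariance furnished by Axiom~V$'$), on which a suitable $D_h$-crossing quantity of $S$ exceeds $(c_*+\epsilon)$ times the corresponding $\wt D_h$-crossing quantity. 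This should be proven by transporting the slack to a prescribed location: conditioning locally and using that the law of the GFF restricted to a bounded region is mutually absolutely continuous under the addition of a deterministic smooth bump function, together with Weyl scaling (Axiom~\ref{item-metric-f}) to track the (a priori different) effect of the bump function on the two metrics, one can force a slack configuration to occur inside $S$ with positive probability. Locality (Axiom~\ref{item-metric-local}) guarantees that $G_S$ is $h|_S$-measurable.

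\textbf{Step 4 (multi-scale percolation and conclusion).} Finally one feeds the events $G_S$ into a multi-scale argument. Using the near-independence of the GFF across disjoint locations and Euclidean scales (cf.\ Remark~\ref{remark-fpp}) and the confluence/structure of geodesics from~\cite{gm-confluence}, one shows that a.s.\ every $\wt D_h$-geodesic of macroscopic length is forced to accumulate a definite fraction $\alpha > 0$ of its $\wt D_h$-length inside regions where $G_S$ holds, so that its $D_h$-length is at least $(c_* + \alpha\epsilon)$ times its $\wt D_h$-length. Taking the infimum over $z,w$ gives $D_h \ge (c_* + \alpha\epsilon)\wt D_h$ a.s., contradicting the definition of $c_*$. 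Hence $c_* = c^*$ and the theorem follows. The main obstacle is Step 3 together with its interface with Step 4: choosing a ``good event'' $G_S$ that is simultaneously $h|_S$-measurable (so that locality and independence across scales apply) and strong enough that geodesics genuinely cannot avoid accumulating length in good regions, and controlling how the bump function perturbs the two geodesics, which may differ since $c_* < c^*$. This is the technical heart of~\cite{gm-uniqueness}; establishing Step 1 with matching constants is also nontrivial but is essentially inherited from the weak-metric estimates of~\cite{lqg-metric-estimates,local-metrics}.
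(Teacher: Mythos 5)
Your overall architecture matches the strategy of~\cite{gm-uniqueness} as the present paper sketches it: establish an a priori bi-Lipschitz equivalence between $D_h$ and $\wt D_h$ (this is essentially~\cite[Theorem 1.6]{local-metrics}), show the optimal ratios $c_* \leq c^*$ are deterministic via translation-ergodicity of the GFF modulo additive constant, then run a bootstrap showing $c_*=c^*$ by localizing the ``slack'' to annular events with positive probability and iterating across scales using confluence of geodesics. Steps 1--3 are reasonable high-level descriptions of what is actually done.

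Step 4 as written contains a genuine logical error in the direction of the inequalities. You state that a $\wt D_h$-geodesic $P$ from $z$ to $w$ accumulates length in good regions, so that $\op{len}(P; D_h) \geq (c_* + \alpha\epsilon)\op{len}(P;\wt D_h)$, and you conclude $D_h(z,w) \geq (c_*+\alpha\epsilon)\wt D_h(z,w)$. This does not follow: since $D_h(z,w)$ is an \emph{infimum} over all paths, $D_h(z,w) \leq \op{len}(P;D_h)$, so a lower bound on the $D_h$-length of one particular path gives no lower bound on $D_h(z,w)$. The argument in~\cite{gm-uniqueness} runs the other way: one takes a $D_h$-geodesic $P$ (so $\op{len}(P;D_h)=D_h(z,w)$) and shows that, because a positive fraction of its $D_h$-length passes through good annuli where $\wt D_h$ admits short detours (specifically, $\wt D_h$ around the annulus at most $(c_*^{-1}-\delta)$ times $D_h$ across it), one can build a competitor path of $\wt D_h$-length strictly less than $c_*^{-1}D_h(z,w)$, hence $\wt D_h(z,w) \leq (c_*^{-1}-\delta')D_h(z,w)$, improving the inequality $c_*\wt D_h\leq D_h$. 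Replacing ``$\wt D_h$-geodesic'' with ``$D_h$-geodesic'' and ``accumulates $D_h$-length'' with ``admits $\wt D_h$-shortcuts'' fixes the direction.

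There is a second, subtler issue at the interface of Steps 3 and 4, which you flag as an obstacle but do not resolve. Your events $G_S$ are required to be $h|_S$-measurable so that the independence-across-annuli machinery (Lemma~\ref{lem-annulus-iterate}) applies; but whether a $D_h$-geodesic \emph{passes through} a given square $S$ is a global, non-local function of $h$. Merely having positive probability for each $G_S$ does not by itself force the geodesic to hit good squares, since the geodesic could, a priori, select the conditionally rare configurations. This is precisely where confluence of geodesics from~\cite{gm-confluence} enters: it is used to establish that, conditionally on coarse geodesic data, the field in small annuli far along a geodesic is close enough to an unconditioned GFF that the positive-probability event still occurs with positive conditional probability, so the iteration can close. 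Your sketch mentions confluence only as ``structure of geodesics'' and does not explain how it breaks the apparent circularity between ``$G_S$ is $h|_S$-measurable'' and ``the geodesic must pass through many $S$ where $G_S$ holds.'' Without this, the multi-scale step does not close.
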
 

Let us now explain why Theorem~\ref{thm-weak-uniqueness} implies Theorem~\ref{thm-unique} (see~\cite[Section 1.4]{gm-uniqueness} for more details). 
If $D_h$ is a weak LQG metric and $b > 0$, then one can check that $D_{h(b\cdot )+Q\log b}(  \cdot / b ,   \cdot / b)$ is a weak LQG metric with the same scaling constants $\frk c_r$ as $D_h$. From this, one gets that $D_{h(b\cdot )+Q\log b}(  \cdot / b ,   \cdot / b) $ is a deterministic constant multiple of $D_h$. One can check that the constant has to be 1. This shows that $D_h$ satisfies Axiom~\ref{item-metric-coord} in Definition~\ref{def-metric}, i.e., $D_h$ is a strong LQG metric. In particular, $D_h$ is a weak LQG metric with scaling constants $r^{\xi Q}$. This holds for any possible weak LQG metric, so we infer that every weak LQG metric is a strong LQG metric and the weak LQG metric is unique up to constant multiples.

\begin{remark} \label{remark-lgd}
There are a few other ways to approximate the LQG metric besides LFPP, which are expected but not proven to give the same object. 
One possible approximation, called \emph{Liouville graph distance}, is based on the LQG area measure $\mu_h$: for $\ep > 0$ and $z,w\in\BB C$, we let $\wh D_h^\ep(z,w)$ be the minimal number of Euclidean balls of $\mu_h$-mass $\ep$ whose union contains a path from $z$ to $w$. 
The tightness of the metrics $\{\wh D_h^\ep\}_{\ep > 0}$, appropriately re-scaled, is proven in~\cite{ding-dunlap-lgd}, but the subsequential limit has not yet been shown to be unique. 

Another type of approximation is based on \emph{Liouville Brownian motion}, the ``LQG time" parametrization of Brownian motion on an LQG surface~\cite{grv-lbm,berestycki-lbm}. 
Roughly speaking, the idea here is that Liouville Brownian motion conditioned to travel a macroscopic distance in a small time should roughly follow an LQG geodesic. 
No one has yet established the tightness of any Liouville Brownian motion-based approximation scheme. However, the paper~\cite{dzz-heat-kernel} shows that the exponent for the Liouville heat kernel can be expressed in terms of the LQG dimension $d_\gamma$, which gives some rigorous connection between Liouville Brownian motion and the LQG metric.
\end{remark}

\subsection{The supercritical and critical cases}
\label{sec-supercritical}

\subsubsection{Convergence}
\label{sec-supercritical-tight}

Recall that LFPP is related to $\gamma$-LQG for $\gamma\in (0,2)$ in the subcritical case, i.e., when $\xi = \gamma/d_\gamma < 2/d_2 \approx 0.41\dots$. 
In this subsection we will explain what happens in the supercritical and critical cases, i.e., when $\xi \geq 2/d_2$. 

The tightness of critical and supercritical LFPP was established in~\cite{dg-supercritical-lfpp}. Subsequently, it was shown in~\cite{dg-uniqueness}, building on~\cite{pfeffer-supercritical-lqg}, that the subsequential limit is uniquely characterized by a list of axioms analogous to the ones in Definition~\ref{def-metric} (see~\cite[Section 1.3]{dg-uniqueness} for a precise statement). Unlike in the subcritical case, in the supercritical case the limiting metric $D_h$ is not a continuous function on $\BB C\times \BB C$, so one cannot work with the uniform topology. However, this metric is lower semicontinuous, i.e., for any $(z,w) \in \BB C \times \BB C$ one has
\eqb \label{eqn-lower-semicont}
D_h(z,w) \leq \liminf_{(z',w') \rta (z,w)} D_h(z',w') .
\eqe
In~\cite[Section 1.2]{dg-supercritical-lfpp} the authors describe a metrizable topology on the space of lower semicontinuous functions $\BB C\times \BB C \rta \BB R \cup \{\pm\infty\}$, based on the construction of Beer~\cite{beer-usc}. With this topology in hand, we can state the following generalization of Theorems~\ref{thm-tight} and~\ref{thm-unique}.

\begin{thm}[\!\!\cite{dg-supercritical-lfpp,pfeffer-supercritical-lqg,dg-uniqueness}] \label{thm-supercritical}
Let $\xi > 0$. The re-scaled LFPP metrics metrics $\{\frk a_\ep^{-1} D_h^\ep\}_{\ep > 0}$ converge in probability with respect to the topology on lower semicontinuous functions on $\BB C\times \BB C$. The limit $D_h$ is a metric on $\BB C$, except that it is allowed to take on infinite values. Moreover, $D_h$ is uniquely characterized (up to multiplication by a deterministic positive constant) by a list of axioms similar to the ones in Definition~\ref{def-metric}.  
\end{thm}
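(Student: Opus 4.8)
The plan is to follow the same three-stage strategy used in the subcritical case (Theorems~\ref{thm-tight} and~\ref{thm-unique}) --- tightness, identification of subsequential limits through a list of axioms, and uniqueness of any object satisfying those axioms --- but carried out in the space of lower semicontinuous functions $\BB C\times\BB C\to\BB R\cup\{\pm\infty\}$ with the metrizable topology built from Beer's construction~\cite{beer-usc}. The point of working in this larger space is that for $\xi\geq 2/d_2$ the limiting metric is genuinely discontinuous and $[0,\infty]$-valued (there are points at infinite distance), so the uniform topology of Theorem~\ref{thm-tight} is no longer available.

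First I would establish tightness of $\{\frk a_\ep^{-1} D_h^\ep\}_{\ep>0}$ in the Beer topology on lower semicontinuous functions (see~\cite[Section~1.2]{dg-supercritical-lfpp} for the precise definition). In this topology, tightness of a family of lower semicontinuous metrics reduces to ruling out the two ways the rescaled metrics could degenerate: an upper bound preventing $\frk a_\ep^{-1} D_h^\ep(z,w)$ from blowing up for a fixed pair of Lebesgue-typical points $z,w$, and a lower bound preventing $\frk a_\ep^{-1} D_h^\ep$ from collapsing toward the identically-zero function. Both follow from moment estimates for LFPP crossing distances of Euclidean squares together with a Russo--Seymour--Welsh / chaining argument in the spirit of~\cite{dddf-lfpp}. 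The new feature, relative to the subcritical case, is that one only controls distances after deleting small Euclidean neighborhoods of the $\alpha$-thick points of $h$ with $\alpha\in(Q,2)$, where $Q = Q(\xi) < 2$ is the exponent from~\eqref{eqn-Q-def}; near such points the LFPP distances diverge as $\ep\to 0$. Carrying this out is the content of~\cite{dg-supercritical-lfpp}.

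Next I would show that every subsequential limit $D_h$ is a measurable function of $h$ which satisfies an axiomatic list analogous to Definition~\ref{def-metric}: it is a length metric taking values in $[0,\infty]$, it satisfies Locality and Weyl scaling (Axioms~\ref{item-metric-local} and~\ref{item-metric-f}), it is lower semicontinuous in the sense of~\eqref{eqn-lower-semicont}, and it obeys translation invariance together with a tightness-across-scales property in the spirit of Definition~\ref{def-weak}. These properties involve only the structure of the approximating integrals~\eqref{eqn-lfpp}, so the arguments of~\cite[Section~2]{lqg-metric-estimates} transfer with cosmetic changes. To this list one must adjoin the qualitative description of the set of points at infinite distance, established in~\cite{pfeffer-supercritical-lqg}: there is an $h$-measurable, Euclidean-dense, Lebesgue-null set of thick points such that $D_h(z,\cdot)\equiv\infty$ exactly for $z$ in this set, while the restriction of $D_h$ to the complement of any Euclidean neighborhood of it is a finite-valued metric inducing the Euclidean topology there. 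This ``finiteness structure'' is what makes the axiomatic list rich enough to pin down the metric.

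The heart of the argument, and the step I expect to be the main obstacle, is uniqueness: if $D_h$ and $\wt D_h$ both satisfy the axioms with the same scaling constants $\frk c_r$, then $D_h = C\wt D_h$ for a deterministic $C>0$. Here one adapts the Gwynne--Miller machinery~\cite{gm-uniqueness,local-metrics,gm-confluence} to the supercritical setting, following~\cite{dg-uniqueness}. The skeleton: (a) using Locality, Weyl scaling and a zero-one law, show that the essential supremum $C_*$ and the essential infimum $c_*$ of the ratio $D_h/\wt D_h$ over Lebesgue-typical pairs of points are deterministic, and that comparing the a priori estimates forces $0 < c_* \leq C_* < \infty$; (b) using confluence of geodesics, show that if $c_* < C_*$ then a $D_h$-geodesic between typical points spends a definite fraction of its length in regions where the ratio is within a constant factor of $c_*$, which lets one build a path witnessing $D_h(z,w)/\wt D_h(z,w) < C_*$ on an event of positive probability, contradicting the definition of $C_*$; hence $c_* = C_* =: C$; (c) upgrade from typical pairs to all pairs using lower semicontinuity and the length-space property. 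The genuinely new difficulties compared with the subcritical case are that geodesics and their confluence must be analyzed between Lebesgue-typical points while steering clear of the dense singular set, and that many estimates which were previously ``comparable to Euclidean scales up to constants'' now hold only after excising thick-point neighborhoods; controlling these exceptional sets uniformly across scales is the crux. Once uniqueness is in hand, convergence in probability follows by the standard argument: any subsequential limit of the joint law of $(h,\frk a_\ep^{-1} D_h^\ep)$ is a coupling of a whole-plane GFF with an object satisfying the axioms, hence equals $(h,D_h)$ for the single measurable map $h\mapsto D_h$ provided by uniqueness; since the subsequential limit is always the same, the whole family converges in probability.
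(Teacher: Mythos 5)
Your three-stage outline (tightness in the Beer topology on lower semicontinuous functions, axiomatic characterization of subsequential limits, uniqueness of the axiomatically-defined object, then convergence in probability via the standard subsequence argument) matches the architecture the paper describes, which delegates these steps to~\cite{dg-supercritical-lfpp}, \cite{pfeffer-supercritical-lqg}, and~\cite{dg-uniqueness} respectively. Your account of the tightness step, the lower semicontinuity, the singular-point/thick-point structure from~\cite{pfeffer-supercritical-lqg}, and the final deduction of convergence in probability from uniqueness all agree with the paper's summary.

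However, your step~(b) in the uniqueness argument misidentifies the key technique of~\cite{dg-uniqueness}. You propose to run the Gwynne--Miller argument from~\cite{gm-uniqueness}: show a geodesic spends a definite fraction of its length where the ratio $D_h/\wt D_h$ is near $c_*$, and use confluence of geodesics to achieve near-independence for events depending on small neighborhoods of far-away points along the geodesic. But the paper states explicitly, in the paragraph following Theorem~\ref{thm-clsce}, that ``the proof of the uniqueness of the LQG metric for general $\xi > 0$ in~\cite{dg-uniqueness} does not use confluence of geodesics.'' Confluence for general $\xi>0$ is proved in the separate work~\cite{dg-confluence}, but~\cite{dg-uniqueness} deliberately sidesteps it. In effect you have reconstructed the subcritical proof of~\cite{gm-uniqueness} and attributed it to the supercritical reference; the actual supercritical uniqueness argument differs in this essential respect, precisely because the ``crux'' you correctly identify (controlling geodesics between typical points in the presence of a Euclidean-dense singular set) makes the confluence route harder, not easier, than the alternative. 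The paper, being a survey, does not describe the confluence-free argument, so I cannot tell you what replaces your step~(b); but your proposal as written does not match the route the cited work takes.
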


Let us be more precise about what we mean by allowing the metric to take on infinite values. For $\xi > 2/d_2$, it is shown in~\cite{dg-supercritical-lfpp} that if $D_h$ is as in Theorem~\ref{thm-supercritical}, then a.s.\ there is an uncountable dense set of \emph{singular points} $z\in\BB C$ such that 
\eqb \label{eqn-singular}
D_h(z,w) = \infty, \quad \forall w \in \BB C\setminus \{z\} .
\eqe
However, a.s.\ each fixed $z\in\BB C$ is not a singular point (so the singular points have Lebesgue measure zero) and any two non-singular points lie at finite $D_h$-distance from each other. Roughly speaking, if $\{h_r(z) : z\in\BB C , r > 0\}$ denotes the circle average process of $h$, then singular points correspond to points in $\BB C$ for which $\limsup_{r\rta 0} h_r(z) / \log r > Q$, where $Q$ is as in~\eqref{eqn-Q-def}~\cite[Proposition 1.11]{pfeffer-supercritical-lqg}. 

Due to the existence of singular points, for $\xi > 2/d_2$, the metric $D_h$ is not continuous with respect to the Euclidean metric on $\BB C\times \BB C$, but one can still show that the Euclidean metric is continuous with respect to $D_h$, see~\cite[Theorem 1.3]{dg-supercritical-lfpp} or~\cite[Proposition 1.10]{pfeffer-supercritical-lqg}.

In the critical case $\xi  =2/d_2$, which corresponds to $\gamma=2$, it is shown in~\cite{dg-critical-lqg} that $D_h$ induces the Euclidean topology on $\BB C$. In particular, there are no singular points for $\xi = 2/d_2$. We expect that the LFPP metrics $\frk a_\ep^{-1} D_h^\ep$ converge uniformly to $D_h$ in this case (not just with respect to the topology on lower semicontinuous functions), but this has not been proven.

\subsubsection{Central charge} 
\label{sec-cc}

\begin{figure}[ht!]
\begin{center}
\includegraphics[width=\textwidth]{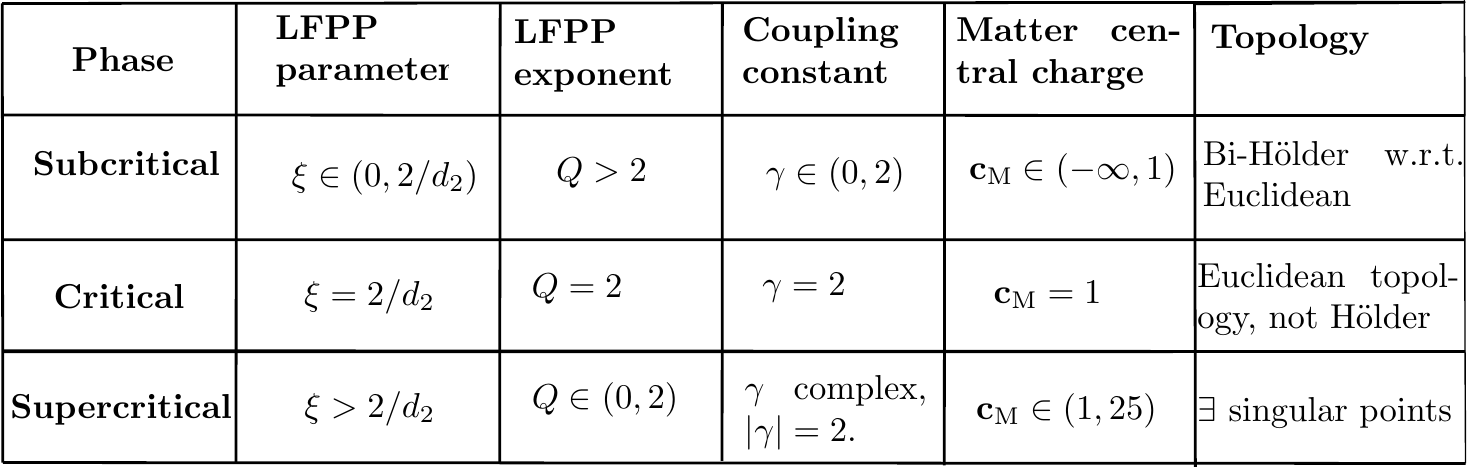}
\caption{\label{fig-c-phases-table} Table summarizing the phases for the LQG metric. 
}
\end{center}
\end{figure}

For $\gamma \in (0,2]$, the \emph{matter central charge} associated with $\gamma$-LQG is 
\eqb \label{eqn-cc}
\ccM = 25 - 6 Q^2 = 25 - 6\left(\frac{2}{\gamma} + \frac{\gamma}{2}\right)^2 \in (-\infty,1]. 
\eqe
Note that $\gamma=\sqrt{8/3}$ corresponds to $\ccM  =0$ and the critical case $\gamma=2$ corresponds to $\ccM  =1$.  
From physics heuristics, one expects that it should also be possible to define LQG, at least in some sense, in the case when the matter central charge is in $(1,25)$. However, this regime is much less well understood than the case when $\ccM \in (-\infty,1]$, even at a physics level of rigor. A major reason for this is that the formula~\eqref{eqn-cc} shows that $\ccM \in (1,25)$ corresponds to $\gamma \in \BB C$ with $|\gamma|=2$, so various formulas for LQG yield non-physical complex answers when $\ccM \in (1,25)$. See~\cite{ghpr-central-charge,apps-central-charge} for further discussion, references, and open problems concerning LQG with $\ccM \in (1,25)$. 

In light of~\eqref{eqn-Q-subcrit} and~\eqref{eqn-cc}, it is natural to define the matter central charge associated with LFPP for $\xi > 2/d_2$ by
\eqb \label{eqn-cc-supercrit}
\ccM = 25 - 6 Q(\xi)^2 ,
\eqe
where $Q(\xi)$ is the LFPP distance exponent as in~\eqref{eqn-Q-def}. One has $Q(\xi) \in (0,2)$ for $\xi > 2/d_2$, so~\eqref{eqn-cc-supercrit} gives $\ccM \in (1,25)$ for $\xi > 2/d_2$. 
Hence, the limit of supercritical LFPP can be interpreted as a metric associated with LQG with $\ccM \in (1,25)$. 
Since $\xi \mapsto Q(\xi)$ is continuous and non-increasing and $\lim_{\xi\rta\infty} Q(\xi ) = 0$~\cite[Proposition 1.1]{dg-supercritical-lfpp}, there is a $\xi > 2/d_2$ corresponding to each $\ccM \in (1,25)$. 

See Figure~\ref{fig-c-phases-table} for an table summarizing the phases for the LQG metric.

\begin{remark} \label{remark-cc}
From a physics perspective, an LQG surface with matter central charge $\ccM$ represents ``two dimensional gravity coupled to a matter field with central charge $\ccM$". 
Equivalently, an LQG surface parametrized by a domain $U$ should be a ``uniform sample from the space of Riemannian metric tensors $g$ on $U$, weighted by $(\det \Delta_g)^{-\ccM/2}$, where $\Delta_g$ is the Laplace Beltrami operator". 
This interpretation is far from being rigorous (e.g., since there is no uniform measure on the space of Riemannian metric tensors), but some partial progress using on regularization procedures has been made in~\cite{apps-central-charge}.    

The central charge also comes up in Polyakov's original motivation for LQG from string theory. If $\ccM$ is an integer, then roughly speaking an evolving string in $\BB R^{\ccM  -1}$ traces out a two-dimensional surface embedded in space-time $\BB R^{\ccM-1} \times \BB R$, called a \emph{world sheet}. Polyakov wanted to develop a theory of integrals over all possible surfaces embedded in $\BB R^{\ccM}$ as a string-theoretic generalization of the Feynman path integral (which is an integral over all possible paths). To do this one needs to define a probability measure on surfaces. It turns out that the ``right" measure on surfaces for this purpose is LQG with matter central charge $\ccM$. However, the most relevant case for string theory is $\ccM = 25$, which is outside the range of parameter values for which LQG can be defined probabilistically. 
\end{remark}

\subsection{Alternative construction and planar map connection for $\gamma=\sqrt{8/3}$} 
\label{sec-lqg-tbm}

In the special case when $\gamma=\sqrt{8/3}$, there is an earlier construction of the $\sqrt{8/3}$-LQG metric due to Miller and Sheffield~\cite{lqg-tbm1,lqg-tbm2,lqg-tbm3}. We will comment briefly on the main idea of this construction. See Miller's ICM paper~\cite{miller-icm} for a more detailed overview. 

The idea of the Miller-Sheffield construction is to first construct a candidate for LQG metric balls, then show that these balls are in fact the metric balls for a unique metric on $\BB C$. The candidates for LQG metric balls are generated using a random growth process called \emph{quantum Loewner evolution} (QLE), which is produced by ``re-shuffling" an SLE$_6$ curve in a random manner depending on $h$. The construction of this growth process and the proof that one can generate a metric from it both rely crucially on special symmetries for $\sqrt{8/3}$-LQG which are established in~\cite{wedges,sphere-constructions}, so the construction does not work for any other value of $\gamma$. 

The Miller-Sheffield metric satisfies the conditions of Definition~\ref{def-metric}, so Theorem~\ref{thm-unique} implies that it agrees with the $\sqrt{8/3}$-LQG metric constructed using LFPP. 
On the other hand, the construction using QLE gives a number of properties of the $\sqrt{8/3}$-LQG metric which are not apparent from the LFPP construction, for example various Markov properties for LQG metric balls and the fact that $d_{\sqrt{8/3}} = 4$. These properties can be proven directly using QLE, or can alternatively be deduced from analogous properties of the Brownian map together with the equivalence between the Brownian map and $\sqrt{8/3}$-LQG discussed just below. 

The papers~\cite{lqg-tbm1,lqg-tbm2} also establish a link between the $\sqrt{8/3}$-LQG metric and uniform random planar maps. This link comes by combining two big results:
\begin{itemize}
\item Le Gall~\cite{legall-uniqueness} and Miermont~\cite{miermont-brownian-map} showed independently that certain types of uniform random planar maps (namely, uniform $k$-angulations for $k=3$ or $k$ even), equipped with their graph distance, converge in the Gromov-Hausdorff sense to a random metric space called the \emph{Brownian map}. See~\cite{legall-sphere-survey,legall-brownian-geometry} for a survey of this work. 
\item Miller and Sheffield showed that there is a certain special variant of the GFF on $\BB C$ (corresponding to the so-called \emph{quantum sphere}) such that the sphere $\BB C\cup\{\infty\}$, equipped with the $\sqrt{8/3}$-LQG metric, is isometric to the Brownian map. This is done using the axiomatic characterization of the Brownian map from~\cite{tbm-characterization}. 
\end{itemize}

\begin{remark}
Building on the aforementioned work (and many additional papers), Holden and Sun~\cite{hs-cardy-embedding} showed the re-scaled graph distance on uniform triangulations embedded into the plane via the so-called \emph{Cardy embedding} converges to the $\sqrt{8/3}$-LQG metric with respect to a version of the uniform topology. This gives a stronger form of convergence than Gromov-Hausdorff convergence. 
 \end{remark}

\section{Properties of the LQG metric}
\label{sec-properties}

In this subsection we will discuss several properties of the LQG metric which have been established in the literature. Throughout, $h$ denotes a whole-plane GFF and $D_h$ denotes the associated LQG metric with a given parameter $\xi > 0$. We also let $Q$ be as in~\eqref{eqn-Q-def} and for $\xi \leq 2/d_2$ we let $\gamma\in (0,2)$ be such that $\xi = \gamma/d_\gamma$, so that $Q = 2/\gamma + \gamma/2$~\eqref{eqn-Q-subcrit}. 

\subsection{Dimension}
\label{sec-dim}

For $\Delta >0$, the $\Delta$-Hausdorff content of a compact metric space $(X,d)$ is
\eqbn
\inf\left\{\sum_{j=1}^\infty r_j^\Delta :\text{there is a covering of $X$ be $d$-metric balls with radii $\{r_j\}_{j\in\BB N}$}\right\}
\eqen
and the \emph{Hausdorff dimension} of $(X,d)$ is the infimum of the values of $\Delta$ for which the $\Delta$-Hausdorff content is zero.  
 
The following theorem follows from the combination of~\cite[Corollary 1.7]{gp-kpz} and~\cite[Proposition 1.14]{pfeffer-supercritical-lqg}.

\begin{thm} \label{thm-dim}
In the subcritical case, i.e., when $\gamma\in (0,2)$ and $\xi = \gamma/d_\gamma$, a.s.\ the Hausdorff dimension of $\BB C$, equipped with the $\gamma$-LQG metric, is equal to $d_\gamma$ (recall the discussion in Section~\ref{sec-lfpp}).
In the supercritical case, i.e., when $\xi  >2/d_2$, the Hausdorff dimension of $\BB C$, equipped with the LQG metric with parameter $\xi$, is $\infty$.
\end{thm}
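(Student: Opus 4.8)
The plan is to handle the subcritical and supercritical cases by quite different arguments; throughout I use the exact scaling relation~\eqref{eqn-metric-law} (legitimate since, by Theorem~\ref{thm-unique}, the LQG metric is a \emph{strong} LQG metric), the fact that the circle average $r\mapsto h_r(z)$ is a continuous centered Gaussian process with $\op{Var} h_r(z) = \log(1/r) + O(1)$ for $z$ in a bounded set and $r$ small, and the multifractal structure of the Gaussian free field.

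\emph{Subcritical upper bound, $\dim_{\mathcal H}(\BB C,D_h)\le d_\gamma$.} First I would cover a fixed ball $B_0 := B_{1/4}(0)$ by its $\lesssim 2^{2n}$ dyadic subsquares $S$ of side $2^{-n}$, with centers $z_S$, and bound $\op{diam}(S;D_h)$ by the internal diameter $\op{diam}(S;D_h(\cdot,\cdot;B_{2^{-n+1}}(z_S)))$, which by the Locality axiom depends only on $h|_{B_{2^{-n+1}}(z_S)}$. Rescaling by $2^{-n+1}$ using~\eqref{eqn-metric-scale}--\eqref{eqn-gff-law}, this equals $(2^{-n+1})^{\xi Q}e^{\xi h_{2^{-n+1}}(z_S)}$ times the corresponding internal diameter of a fixed compact subset of the unit disk for $D_{h'}$, $h'$ a whole-plane GFF; for $n$ large one has $B_{2^{-n+1}}(z_S)\subset B_1(0)$, which makes that factor \emph{independent} of $h_{2^{-n+1}}(z_S)$, and it has finite moments of all positive orders by the basic metric estimates of~\cite{lqg-metric-estimates}. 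Since $h_{2^{-n+1}}(z_S)$ is centered Gaussian with variance $n\log 2 + O(1)$, this gives $\BB E[\op{diam}(S;D_h)^\Delta]\lesssim 2^{n(-\Delta\xi Q + \frac{\xi^2}{2}\Delta^2)}$, hence
\eqbn
\BB E\left[\sum_S \op{diam}(S;D_h)^\Delta\right]\lesssim 2^{n\left(2 - \Delta\xi Q + \frac{\xi^2}{2}\Delta^2\right)} .
\eqen
Using $\xi = \gamma/d_\gamma$ and $Q = 2/\gamma+\gamma/2$, the exponent $2-\Delta\xi Q+\frac{\xi^2}{2}\Delta^2$ vanishes at $\Delta = d_\gamma$ and has derivative $\xi(\gamma - Q)<0$ there, so it is negative for $\Delta$ slightly above $d_\gamma$; for such $\Delta$ the bound tends to $0$, and since the $\Delta$-Hausdorff content of $(B_0,D_h)$ is at most $\sum_S\op{diam}(S;D_h)^\Delta$, Fatou gives content $0$ a.s. Letting $\Delta\downarrow d_\gamma$ and covering $\BB C$ by countably many translates of $B_0$ (translation invariance) finishes the upper bound.

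\emph{Subcritical lower bound, $\dim_{\mathcal H}(\BB C,D_h)\ge d_\gamma$.} Here I would run a Frostman / mass-distribution argument with the LQG area measure $\mu_h$, a Borel measure for $D_h$ (which induces the Euclidean topology in the subcritical case) with $\mu_h(B_{1/2}(0))\in(0,\infty)$ a.s. The input I would borrow is the ``volume of a metric ball'' estimate $\mu_h(B_\rho^{D_h}(z)) = \rho^{\,d_\gamma + o_\rho(1)}$ for $\mu_h$-a.e.\ $z$, which is the technical core of~\cite{gp-kpz}. Given it, $\mu_h(B_\rho^{D_h}(z))\le \rho^{d_\gamma-\ep}$ for all small $\rho$ and $\mu_h$-a.e.\ $z$, so the mass distribution principle yields $\dim_{\mathcal H}(B_{1/2}(0),D_h)\ge d_\gamma-\ep$ for every $\ep>0$, whence the lower bound and, with the previous step, the subcritical claim.

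\emph{Supercritical case, $\dim_{\mathcal H}(\BB C,D_h)=\infty$.} Now $Q = Q(\xi)\in(0,2)$. For $\alpha\in(0,Q)$ let $T_\alpha := \{z : h_r(z)/\log(1/r)\to\alpha\}$, the set of $\alpha$-thick points of the GFF, with Euclidean Hausdorff dimension $2-\alpha^2/2 > 2-Q^2/2 > 0$. Since $\alpha<Q$, every $z\in T_\alpha$ is non-singular: by~\eqref{eqn-metric-scale} and the RSW lower bound of~\cite{df-lqg-metric} (with a union bound over dyadic scales) the $D_h$-crossing distance of the annulus $B_\delta(z)\setminus B_{\delta/2}(z)$ is $\ge\delta^{\xi(Q-\alpha)+o_\delta(1)}$, and these costs are summable. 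The same estimate gives, for $z\in T_\alpha$ and $|z-w|=\delta$ small, $D_h(z,w)\ge D_h(z,\bdy B_\delta(z))\ge\delta^{\xi(Q-\alpha)+o_\delta(1)}$, so (writing $\theta_\alpha := \xi(Q-\alpha)$) any $D_h$-ball of radius $\rho$ centered on $T_\alpha$ meets $T_\alpha$ only within Euclidean distance $\rho^{1/\theta_\alpha + o_\rho(1)}$ of its center. Converting $D_h$-ball covers of $T_\alpha$ into Euclidean covers in this way and comparing with $\dim_{\mathcal H}^{\mathrm{Euc}}(T_\alpha)$ gives $\dim_{\mathcal H}(T_\alpha,D_h)\ge (2-\alpha^2/2)/(\xi(Q-\alpha))$, which tends to $\infty$ as $\alpha\uparrow Q$ (numerator $\ge 2-Q^2/2>0$, denominator $\to 0$); as $T_\alpha$ consists of non-singular points this gives $\dim_{\mathcal H}(\BB C,D_h)=\infty$. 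I expect the main obstacle in both halves of the theorem to be exactly the \emph{lower} bounds on $D_h$-distances used above — the metric-ball volume estimate and the annulus-crossing lower bound — which are not formal consequences of the axioms and rest on the multiscale/percolation analysis of LFPP from~\cite{lqg-metric-estimates,dddf-lfpp,dg-supercritical-lfpp,gm-confluence}; the covering and Frostman arguments built on them are comparatively routine.
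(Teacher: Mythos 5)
The paper itself does not prove this theorem; it simply cites it as following from \cite[Corollary 1.7]{gp-kpz} for the subcritical case and \cite[Proposition 1.14]{pfeffer-supercritical-lqg} for the supercritical case, so there is no ``paper proof'' to compare against directly. Your sketch is a reasonable reconstruction of the structure of those arguments, and you correctly identify that the hard input is lower bounds on $D_h$ (the ball-volume estimate in the subcritical case, the annulus-crossing lower bound at thick points in the supercritical case), with the covering/Frostman bookkeeping being comparatively routine.

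One factual slip: you claim the rescaled internal diameter factor ``has finite moments of all positive orders.'' It does not — as the paper itself records in Section~\ref{sec-estimates}, for $\xi < 2/d_2$ the $D_h$-diameter of a non-singleton compact set has finite $p$th moments only for $p < 4d_\gamma/\gamma^2$. Your first-moment computation still closes, because the exponent $2 - \Delta\xi Q + \tfrac{\xi^2}{2}\Delta^2$ becomes negative for $\Delta$ just above $d_\gamma$, and $d_\gamma < 4d_\gamma/\gamma^2$ for $\gamma\in(0,2)$, so a valid window of $\Delta$ exists; but you should invoke the correct (finite) moment threshold rather than ``all positive orders,'' since otherwise the reader cannot see that the window is nonempty. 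A second place that needs more care is the supercritical step: the bound $D_h(z,\bdy B_\delta(z)) \ge \delta^{\xi(Q-\alpha)+o_\delta(1)}$ must hold \emph{simultaneously} for all $z$ in the random set $T_\alpha$, uniformly enough to convert $D_h$-covers into Euclidean covers. This does not follow from a pointwise scaling estimate plus Borel--Cantelli for a fixed $z$; one needs a stratification of $T_\alpha$ (e.g.\ into sets of points whose circle averages stay in a window $(\alpha\pm\ep)\log(1/r)$ for all $r\le\delta_0$) together with a multiscale/union bound to control all scales and locations at once, which is exactly the content of \cite[Proposition~1.14]{pfeffer-supercritical-lqg}. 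As a sketch your argument is sound; these are the two points I would flag if you were asked to write it out in full.
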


As noted above, the value of $d_\gamma$ is not known except that $d_{\sqrt{8/3}}=4$, but upper and lower bounds for $d_\gamma$ have been proven in~\cite{dg-lqg-dim,gp-lfpp-bounds,ang-discrete-lfpp} (see Figure~\ref{fig-d-bound}). 
It is shown in~\cite[Theorem 1.2]{dg-lqg-dim} that $\gamma\mapsto d_\gamma$ is increasing and $\lim_{\gamma\rta 0} d_\gamma=2$. Hence, Theorem~\ref{thm-dim} implies that the LQG metric gets ``rougher" as $\gamma$ increases. We expect that the dimension of $\BB C$ with respect to the critical ($\gamma=2$) LQG metric is $d_2 = \lim_{\gamma\rta 2} d_\gamma\approx 4.8$, but this has not been proven.

It was shown in~\cite{afs-metric-ball} that for $\gamma\in (0,2)$, the Minkowski dimension of $(\BB C, D_h)$ is also equal to $d_\gamma$. 
We expect that in this case, the $d_\gamma$-Minkowski content measure for $D_h$ exists and is equal to the $\gamma$-LQG area measure $\mu_h$ from~\eqref{eqn-lqg-measure}. Similarly, the Hausdorff measure associated with $D_h$, for an appropriate gauge function, should exist and be equal to $\mu_h$. This has been proven for the Brownian map (which is equivalent to $\sqrt{8/3}$-LQG, recall Section~\ref{sec-lqg-tbm}) in~\cite{legall-measure}. 

\subsection{Quantitative estimates}
\label{sec-estimates}

The optimal H\"older exponents relating $D_h$ and the Euclidean metric can be computed in terms of $\xi$ and $Q$. For the subcritical (resp.\ supercritical) case, see~\cite[Theorem 1.7]{lqg-metric-estimates} (resp.~\cite[Proposition 1.10]{pfeffer-supercritical-lqg}).

\begin{prop}[H\"older continuity]
Let $U\subset\BB C$ be a bounded open set. Almost surely, for each $\delta>0$ there is a random $C>0$ such that
\eqbn
C^{-1} |z-w|^{\xi (Q+2) + \delta} \leq D_h(z,w) \leq  
\begin{cases}
C |z-w|^{\xi(Q-2) - \delta} ,\quad \xi < 2/d_2 \\
\infty , \quad \xi \geq 2/d_2 .
\end{cases}
\eqen
Furthermore, the exponents $\xi(Q+2)$ and $\xi(Q-2)$ are optimal.
\end{prop}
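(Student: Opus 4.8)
The plan is to prove the four one-sided bounds (upper and lower, subcritical and supercritical) and then the optimality of the exponents. The key tool throughout is the exact scaling relation~\eqref{eqn-metric-law}, which lets us transfer estimates between Euclidean scales, together with standard Gaussian tail bounds for the circle-average process $r \mapsto h_r(z)$. Recall that $h_r(0)$ is a centered Gaussian with variance $\log(1/r) + O(1)$ as $r \to 0$, and that $\max_{|z| \le 1} (h_{|z-w|}(w) - \text{something})$-type suprema can be controlled by a union bound over dyadic scales and a net of points, using the near-independence of the GFF across scales (Remark~\ref{remark-fpp}).

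First I would establish the building-block estimate: there are constants depending on $\xi, Q$ such that for a fixed pair $z, w$ with $r = |z-w|$ small, the ``typical size'' of $D_h(z,w)$ is $r^{\xi Q} e^{\xi h_r(z)}$, up to a random multiplicative factor with moments of all orders. This follows from~\eqref{eqn-metric-law}: the rescaled metric $\frk c_r^{-1} e^{-\xi h_r(z)} D_h(r\cdot + z, r \cdot + z)$ has the same law as $D_h$ (with $\frk c_r = r^{\xi Q}$ in the strong case), so $D_h(z,w)$ equals $r^{\xi Q} e^{\xi h_r(z)}$ times $D_h$ evaluated at two points at Euclidean distance $1$, which has good upper and lower tails by the a priori estimates on weak LQG metrics established in~\cite{lqg-metric-estimates,local-metrics}. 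Combining this with the Gaussian bound $h_r(z) \le (1+\delta')\sqrt{2\log(1/r)\cdot\log\log(1/r)}$ (LIL-type upper bound) — or more simply, $h_r(z) \le -(Q \pm \text{small})\log r$ on the relevant events — converts the factor $r^{\xi Q} e^{\xi h_r(z)}$ into $r^{\xi(Q - 2) - \delta}$ for the upper bound and $r^{\xi(Q+2) + \delta}$ for the lower bound. The asymmetry between $Q - 2$ and $Q+2$ comes from the fact that for the upper bound on $D_h(z,w)$ we need the field to be large (positive $h_r$) only along \emph{some} path, which costs less, whereas for the lower bound we need the field to be not-too-small \emph{everywhere} near the geodesic, i.e.\ we must rule out deep negative ``wells'' of the field, and a negative $2\log r$-sized well is exactly what produces the extra $2$; this is where one uses a multiscale/chaining argument over all pairs $z,w$ in $U$ simultaneously, not just a fixed pair. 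In the supercritical case $\xi \ge 2/d_2$ the upper bound is simply $\infty$ because of the singular points~\eqref{eqn-singular}, so only the lower bound requires proof, and the same argument applies verbatim since the lower bound never used continuity of $D_h$.

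To upgrade from a fixed pair to ``a.s., for all $z,w \in U$'' I would use a union bound over a dyadic net: cover $U$ by $O(2^{2n})$ points at scale $2^{-n}$, apply the single-pair estimate with a slightly worse exponent and a polynomial-in-$2^n$ error probability coming from the stretched-exponential tails, sum over $n$, and invoke Borel–Cantelli; a chaining step then fills in the estimate for all pairs, not just net pairs, using the length-space axiom (Axiom~\ref{item-metric-length}) to bound $D_h$ along short paths. The main obstacle, and the technical heart of the argument, is the lower bound: one must show that with overwhelming probability there is no pair of nearby points $z,w$ joined by a path along which the field $h$ (hence $e^{\xi h}$) is anomalously small, which requires a quantitative multiscale estimate on the modulus of continuity / minima of the GFF-driven metric — precisely the kind of estimate proven via the percolation and iterate-across-annuli techniques of Sections~\ref{sec-annulus-iterate} and~\ref{sec-perc}. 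Finally, optimality of the exponents $\xi(Q \pm 2)$ follows by exhibiting, for each $\delta > 0$, points where the bound is nearly saturated: for the lower exponent, pick $z$ with $h_r(z)/\log r$ close to $-(Q+2)$ for a sequence of $r \to 0$ (such $z$ exist a.s.\ by the multifractal spectrum of the GFF / the analysis of near-minima of the circle average), and symmetrically for the upper exponent using thick points with $h_r(z)/\log r \to -(Q-2)$; that these thick/thin points have the claimed effect on $D_h$ is again read off from~\eqref{eqn-metric-law}.
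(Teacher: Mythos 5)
The paper does not prove this proposition; it is stated with citations to~\cite[Theorem 1.7]{lqg-metric-estimates} and~\cite[Proposition 1.10]{pfeffer-supercritical-lqg}, so there is no in-paper proof to compare against. Your overall strategy is the correct one and matches those references: pull out the factor $r^{\xi Q}e^{\xi h_r(z)}$ via the exact scaling relation~\eqref{eqn-metric-law}, control the residual unit-scale distance by its moment bounds, control $h_r(z)$ by Gaussian tails, and upgrade to a simultaneous statement over all pairs by a dyadic union bound, Borel--Cantelli, and chaining using Axiom~\ref{item-metric-length}; the supercritical lower bound goes through the same way, and the upper bound is vacuous there.

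However, you have a concrete error in identifying where the ``$\pm 2$'' comes from, and it recurs in two places. The relevant fact is that $\sup_{z\in U} h_r(z) = (2+o(1))\log(1/r)$ and $\inf_{z\in U} h_r(z) = -(2+o(1))\log(1/r)$ as $r\to 0$: these are the extremal thick/thin points (the set of $\alpha$-thick points is nonempty precisely for $\alpha\in[-2,2]$). Plugging $h_r(z)\approx \pm 2\log(1/r)$ into $r^{\xi Q}e^{\xi h_r(z)}$ gives $r^{\xi(Q\mp 2)}$, which is exactly the claimed pair of exponents; the two exponents are symmetric about $\xi Q$ and both come from this same $\pm 2$, so the paragraph explaining an ``asymmetry'' between paths and wells is a red herring. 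In the building-block step you write $h_r(z)\le -(Q\pm\text{small})\log r$, i.e.\ $h_r(z)\lesssim Q\log(1/r)$; since $Q>2$ in the subcritical regime this bound is not tight and would give the exponent $0$ rather than $\xi(Q-2)$. More seriously, in the optimality step you propose to use points with $h_r(z)/\log r \to -(Q+2)$ (resp.\ $-(Q-2)$); since $|{-}(Q+2)|>2$, such points do not exist in the GFF, so the argument breaks down. The correct choice is to take $(-2)$-thick points to saturate the lower exponent and $(+2)$-thick points to saturate the upper one, i.e.\ $h_r(z)/\log(1/r)\to\mp 2$. With that correction the outline would be sound.
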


In the critical case when $\xi = 2/d_2$, equivalently $Q=2$, the metric $D_h$ is continuous with respect to the Euclidean metric but not H\"older continuous. Rather, the optimal upper bound for $D_h(z,w)$ is a power of $1/\log(|z-w|^{-1})$~\cite{dg-critical-lqg}. 

We also have moment bounds for point-to-point distances, set-to-set distances, and diameters. The following is a compilation of several results from~\cite{lqg-metric-estimates,pfeffer-supercritical-lqg}.

\begin{prop}[Moments]
For each distinct $z,w\in \BB C$, the distance $D_h(z,w)$ has a finite $p$th moment for all $p\in (-\infty,2Q/\xi)$. 
For any two disjoint compact connected sets $K_1,K_2\subset\BB C$ which are not singletons, $D_h(K_1,K_2)$ has finite moments of all positive and negative orders.
For $\xi < 2/d_2$, for any non-singleton compact set $K\subset \BB C$, the $D_h$-diameter $\sup_{z,w\in K} D_h(z,w)$ has a finite $p$th moment for all $p \in (-\infty,4 d_\gamma/ \gamma^2)$. 
\end{prop}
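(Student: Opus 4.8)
The plan is to derive all three moment bounds from the axioms for the (weak) LQG metric, using the exact scaling relation \eqref{eqn-metric-law} together with the Gaussian tails of the circle average process. The common mechanism is the following: by Weyl scaling and coordinate change (Axioms~\ref{item-metric-f} and~\ref{item-metric-coord}), for a whole-plane GFF $h$ and any $z\in\BB C$, $r>0$ we have $D_h(z, \partial B_r(z)) \eqD e^{\xi h_r(z)} r^{\xi Q} D_h(0,\partial B_1(0))$ in distribution (this is the set-to-set version of \eqref{eqn-metric-law}). Since $h_r(z)$ is a centered Gaussian with variance $\log(1/r) + O(1)$ for small $r$, the factor $e^{\xi h_r(z)}$ has all positive and negative moments, and the exponent $\xi Q$ controls the typical size. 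The finiteness of the base quantities $D_h(0,\partial B_1(0))^{\pm}$ for small enough exponents is established in \cite{lqg-metric-estimates} (subcritical) and \cite{pfeffer-supercritical-lqg} (supercritical); I would cite these as inputs rather than reprove them.

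For the point-to-point bound, first note $D_h(z,w)$ is comparable, up to constants depending only on $|z-w|$, to $e^{\xi h_r(z)} r^{\xi Q}$ times an order-one random variable where $r = |z-w|$; more precisely one sandwiches $D_h(z,w)$ between $D_h(z,\partial B_{r/2}(z))$-type quantities. The threshold $2Q/\xi$ arises because the $p$th moment of $e^{\xi h_r(z)}\cdot(\text{base distance})$ is finite iff we can afford the Gaussian moment generating function evaluated at $p\xi$ against the logarithmic variance $\log(1/r)$ \emph{and} the base distance has a finite $p$th moment; the dominant constraint turns out to be the negative-moment side of the base distance near $0$, which caps $p$ at $2Q/\xi$. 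I would reduce to the case $z=0$, $w$ on the unit circle by scaling, then invoke the known moment bounds for $D_h(0, \partial B_1(0))$ and $D_h(0,\partial B_2(0) )$ together with a union/chaining over annuli $B_{2^{k}}\setminus B_{2^{k-1}}$ to get the full moment, using that $\sum_k$ of the scaled contributions converges when $p < 2Q/\xi$ (here Axiom~\ref{item-metric-length}, the length space property, lets us build a path realizing the upper bound by concatenating crossings of dyadic annuli).

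For the set-to-set bound with $K_1, K_2$ disjoint compact connected non-singleton sets: the key point is that such sets have positive diameter, so each contains a nondegenerate continuum. Fix $r>0$ less than half the distance between $K_1$ and $K_2$ and less than both diameters; then $D_h(K_1,K_2)$ is bounded below by $\inf_{z\in K_1} D_h(z, \partial B_r(z))$ and above by a sum over a covering of a path from $K_1$ to $K_2$ by order-$O(1)$ many balls of radius $r$. Because $K_i$ is connected and not a singleton, one can also bound $D_h$-distances \emph{within} $K_i$ by a similar chaining, so the diameter of $K_i$ in $D_h$ has all moments. The upshot: $D_h(K_1,K_2)$ is sandwiched between positive powers of order-one random variables built from finitely many independent-ish scaled copies of base distances, and crucially the logarithmic blowup of the variance is absent because $r$ is bounded below — hence all positive and negative moments. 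For the diameter bound $\sup_{z,w\in K}D_h(z,w)$, restricting to $\xi<2/d_2$ so that $D_h$ is finite and induces the Euclidean topology, I would cover $K$ by $O(\delta^{-2})$ Euclidean balls of radius $\delta$, bound the $D_h$-diameter of each by a scaled base diameter, and sum; optimizing the resulting moment computation against the Gaussian tails of the $O(\delta^{-2})$ circle averages (a maximum over $\sim\delta^{-2}$ Gaussians contributes a $\sqrt{\log(1/\delta)}$ shift) gives a finite $p$th moment exactly when $p < 4d_\gamma/\gamma^2 = 4/(\xi Q)\cdot$(correction); I would track the constants to confirm the stated threshold $4 d_\gamma/\gamma^2$, which equals $2 \cdot 2/(\xi Q \gamma / \xi)$ — i.e. it comes from balancing the volume exponent $2$ for the covering against $\xi Q$ and the Gaussian cost.

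The main obstacle is the diameter bound: unlike point-to-point and set-to-set distances, it genuinely requires controlling a \emph{supremum} of distances over a continuum of scales and locations simultaneously, so one needs a quantitative a priori estimate — a chaining/multiscale argument showing that with high probability no small Euclidean ball has anomalously large $D_h$-diameter — rather than a one-line distributional identity. This is where one must borrow the metric estimates of \cite{lqg-metric-estimates} in earnest (in particular the Hölder-type bounds and tail estimates for $D_h$-diameters of dyadic squares), and the delicate point is that the optimal exponent $4d_\gamma/\gamma^2$ is saturated, so the argument has to be tight and cannot afford lossy union bounds; one controls the contribution of the bad Gaussian event for the circle averages by precisely matching the number of squares against the Gaussian large-deviation rate.
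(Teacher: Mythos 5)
The paper gives no proof of this proposition; it explicitly records it as a compilation of results from \cite{lqg-metric-estimates} and \cite{pfeffer-supercritical-lqg}. So there is no in-text argument to compare against. Your overall architecture (the scaling relation \eqref{eqn-metric-law}, Gaussian tails of the circle average, multiscale chaining) is the correct one and does match those references, but two of your threshold explanations are wrong, and one step as written is circular.

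\textbf{Point-to-point.} Attributing the cutoff $2Q/\xi$ to ``the negative-moment side of the base distance near $0$'' is incorrect: the base quantity $D_h(0,\partial B_1(0))$ is a set-to-set distance at a fixed scale and has moments of \emph{all} orders, so it imposes no constraint. The threshold comes entirely from summability over scales near the endpoints: the $p$th moment of the $D_h$-distance across $B_{2^{-k+1}}(z)\setminus B_{2^{-k}}(z)$ scales like $2^{k(p^2\xi^2/2 - p\xi Q)}$ (Gaussian MGF of the circle average against the contraction $r^{p\xi Q}$), and the Minkowski sum over $k$ converges iff $p<2Q/\xi$. Your subsequent sentence about chaining is the right mechanism; the ``negative moment'' remark is a red herring.

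\textbf{Diameter.} First, your algebra does not close: $4d_\gamma/\gamma^2 = 4/(\xi\gamma)$, while the quantity you write, $2\cdot 2/(\xi Q\gamma/\xi) = 4/(Q\gamma)$, is different since $Q\neq\xi$. Second, a maximum of $\sim\delta^{-2}$ circle averages with variance $\log(1/\delta)$ contributes a shift of order $\log(1/\delta)$, not $\sqrt{\log(1/\delta)}$ --- you dropped the factor of the standard deviation. Third, and substantively, the plan to ``bound the $D_h$-diameter of each covering ball by a scaled base diameter and sum'' is circular, since the base diameter has exactly the moment threshold you are trying to establish; the multiscale step must instead use $D_h$-distances \emph{around} (or across) the annuli surrounding each dyadic box, which are set-to-set quantities at fixed scale and hence have all moments. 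Finally, your closing claim that the argument ``cannot afford lossy union bounds'' is backwards: a plain union bound is sharp here. Writing $\sup_{z,w\in K}D_h(z,w)\leq C\sum_k M_k$ with $M_k$ the maximum of the $\sim 2^{2k}$ around-annulus distances at scale $2^{-k}$, one gets $\|M_k\|_p\lesssim 2^{k(2/p + p\xi^2/2 - \xi Q)}$, which is summable iff $p^2\xi^2 - 2p\xi Q + 4<0$. Since $Q=2/\gamma+\gamma/2$ gives $Q^2-4=(2/\gamma-\gamma/2)^2$, the upper root is exactly $(Q+\sqrt{Q^2-4})/\xi = 4/(\gamma\xi) = 4d_\gamma/\gamma^2$; the union bound reproduces the stated exponent because large crossings in distinct small annuli are essentially disjoint events, so the entropy term $2^{2k}$ is matched, not beaten, by the Gaussian large deviation rate.
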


The moment bound for diameters is related to the fact that the LQG area measure has finite moments up to order $4/\gamma^2$ (see, e.g.,~\cite[Theorem 2.11]{rhodes-vargas-review}).

\subsection{Geodesics} 
\label{sec-geodesic}

Using basic metric space theory, one can show that a.s.\ for any two points $z,w\in\BB C$ with $D_h(z,w) < \infty$, there is a $D_h$-geodesic from $z$ to $w$, i.e., a path of minimal $D_h$-length  (see, e.g.,~\cite[Corollary 2.5.20]{bbi-metric-geometry} for the subcritical case and~\cite[Proposition 1.12]{pfeffer-supercritical-lqg} for the supercritical case). If $z$ and $w$ are fixed, then a.s.\ this geodesic is unique~\cite[Theorem 1.2]{mq-geodesics}. We give a short proof of this fact in Lemma~\ref{lem-geo-unique} below. 

It can be shown that the $D_h$-geodesics started from a specified point have a tree-like structure: two geodesics with the same starting point and different target points stay together for a non-trivial initial time interval. The property is called \emph{confluence of geodesics}, and can be seen in the simulations from Figure~\ref{fig-ball-sim}. 

We emphasize that confluence of geodesics is not true for a smooth Riemannian metric (such as the Euclidean metric). Rather, two geodesics for a smooth Riemannian metric with the same starting points and different target points typically intersect only at their starting point. 

Confluence of geodesics for the LQG metric was established in the subcritical case ($\xi < 2/d_2$) in~\cite{gm-confluence} and for general $\xi > 0$ in~\cite{dg-confluence}. Let us now state a precise version of this result, which is illustrated in Figure~\ref{fig-confluence} For $s > 0$ and $z\in\BB C$, let $\mcl B_s(z;D_h)$ be the $D_h$-metric ball of radius $s$ centered at $z$.

\begin{figure}[ht!]
\begin{center}
\includegraphics[width=.5\textwidth]{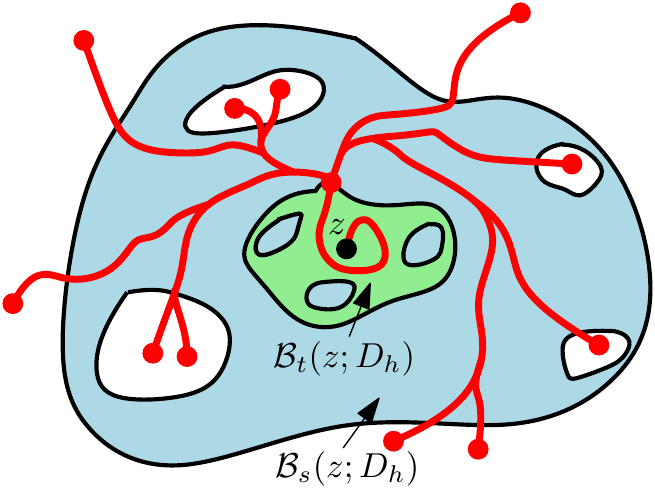}
\caption{\label{fig-confluence} Illustration of the statement of Theorem~\ref{thm-clsce}. The red curves are $D_h$-geodesics going from $z$ to points outside of the LQG metric ball $\mcl B_s(z;D_h)$. The theorem asserts that these geodesics all coincide until their first exit time from $\mcl B_t(z;D_h)$. 
}
\end{center}
\end{figure}

\begin{thm}[Confluence of geodesics] \label{thm-clsce}
Fix $z\in\BB C$. Almost surely, for each radius $s > 0$ there exists a radius $t \in (0,s)$ such that any two $D_h$-geodesics from $z$ to points outside of $\mcl B_s(z;D_h)$ coincide on the time interval $[0,t]$.
\end{thm}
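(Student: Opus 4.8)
The plan is to prove confluence at a fixed scale first, i.e., to show that with positive probability all geodesics from $z$ to $\partial\mcl B_s(z;D_h)$ agree on $[0,t]$ for some $t$ depending on a good event, and then to boost this to an almost sure statement for \emph{every} $s$ by a scaling/iteration argument using the near-scale-invariance of the LQG metric (the weak LQG metric axioms, in particular Axiom V$'$). By translation invariance we may take $z=0$. The key geometric input is that $D_h$-geodesics from $0$ to the boundary of a metric ball are forced to pass through ``bottleneck'' sets: I would look for a Euclidean annulus $A$ around $0$ (at a random but positive-probability-controlled scale) such that the $D_h$-distance across $A$ (from its inner to its outer boundary) is much larger than the $D_h$-diameter of the inner circle relative to the distance scale at hand. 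On such an event, the portion of any geodesic from $0$ to a point outside $\mcl B_s(0;D_h)$ that lies inside the smaller ball is essentially pinned, because any two geodesics that entered $A$ at far-apart points could be spliced to produce a shorter path, contradicting minimality; this is the mechanism that forces coincidence on an initial interval $[0,t]$.

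The key steps, in order, are: (1) Establish a one-scale confluence estimate: there is $p_0>0$ so that with probability at least $p_0$, there exists $t>0$ such that all $D_h$-geodesics from $0$ that exit $\mcl B_s(0;D_h)$ coincide on $[0,t]$ — this uses the moment/H\"older bounds of Section~\ref{sec-estimates} (to compare distances across annuli with ball diameters), the RSW-type estimates underlying Theorem~\ref{thm-tight}, and a combinatorial argument that only finitely many ``geodesic directions'' need to be controlled (one covers $\partial\mcl B_s$ by finitely many arcs and uses uniqueness of geodesics to fixed points, Theorem~1.2 of~\cite{mq-geodesics}, plus lower semicontinuity~\eqref{eqn-lower-semicont} in the supercritical case to pass to a limiting geodesic). (2) Upgrade the positive-probability event to an almost-sure event by applying step (1) across a sequence of Euclidean scales $r_k\to 0$: the relation~\eqref{eqn-metric-law} (or the weak-metric analogue via $\frk c_r$) together with the near-independence of the GFF across scales (Remark~\ref{remark-fpp}, Sections~\ref{sec-annulus-iterate}, \ref{sec-perc}) gives that the good event occurs for infinitely many $k$ a.s., and on any such scale the geodesics from $0$ are pinned inside $\mcl B_{r_k}$-related balls. (3) Deduce the statement for \emph{all} $s>0$ simultaneously: since every geodesic from $0$ to a point at $D_h$-distance $\geq s$ must traverse every metric ball of radius $<s$ around $0$, and since confluence at one small scale forces coincidence on an initial interval, a monotonicity argument across a dense set of radii $s$ (and continuity/lower semicontinuity of $D_h$) promotes the countable statement to all $s$.

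The main obstacle is step (1), the one-scale confluence estimate — specifically, producing the bottleneck event with a quantitative gap and ruling out that geodesics ``sneak'' through the annulus at many well-separated locations. The difficulty is that a priori a geodesic ball can have a complicated, fractal boundary with many complementary components (as noted after Figure~\ref{fig-ball-sim}), so one cannot simply say a geodesic crosses the annulus at a single controlled point; one needs an RSW/percolation-style argument showing that the set of ``crossing points'' of near-geodesics is contained in a single small cluster with good probability. I would handle this by choosing the annulus scale so that the across-annulus distance dominates, then noting that any two geodesics whose crossing points were in different clusters could be shortcut along a cheap path in the annulus — here one must be careful that such a shortcut path exists and is cheap, which is exactly where the RSW estimate and the Weyl-scaling axiom (Axiom~\ref{item-metric-f}) enter. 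A secondary, more technical obstacle is the supercritical case, where $D_h$ is only lower semicontinuous and can be infinite: one must work with the topology on lower semicontinuous functions from Theorem~\ref{thm-supercritical} and check that geodesics to exterior points still exist and that limits of geodesics are geodesics, but this is by now standard (see~\cite[Proposition 1.12]{pfeffer-supercritical-lqg}).
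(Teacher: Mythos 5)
This survey does not itself prove Theorem~\ref{thm-clsce}; it cites~\cite{gm-confluence} (subcritical) and~\cite{dg-confluence} (general $\xi > 0$). Your scaffolding --- a positive-probability good event at one scale, iterated across a sequence of Euclidean scales using near-independence of the field in concentric annuli (Lemma~\ref{lem-annulus-iterate}), followed by a monotonicity step in $s$ --- is consistent with the architecture of those papers. The genuine gap is in your step (1), the one-scale coalescence event: the ``bottleneck/splicing'' mechanism you describe does not force geodesics to coincide. If $P_1$ and $P_2$ are $D_h$-geodesics from $0$ to two \emph{different} endpoints $w_1, w_2$, splicing part of $P_1$ onto $P_2$ produces a path to the wrong endpoint, so there is no length contradiction. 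The argument of Lemma~\ref{lem-geo-hit} that you are implicitly importing uses a cheap loop $\pi$ to reroute a \emph{single} geodesic between its own first and last hits of $\pi$, hence shows avoidance of the inner ball; it gives nothing about two geodesics to distinct targets having to agree. More fundamentally, forcing all geodesics from $0$ to pass through a small set is not the same as coincidence: for a smooth Riemannian metric every geodesic from $0$ crosses every tiny metric circle around $0$, yet geodesics to different targets diverge instantly. Confluence is a purely fractal phenomenon and needs a mechanism that produces an actual merge.

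Your proposed remedy --- covering $\bdy\mcl B_s(0;D_h)$ by finitely many arcs and invoking uniqueness of geodesics to fixed endpoints (Theorem~1.2 of~\cite{mq-geodesics}) --- also does not close the gap. As the text after Lemma~\ref{lem-geo-unique} points out, geodesic uniqueness is a statement for a \emph{fixed} pair of points; a.s.\ there are exceptional pairs $(0,w)$ joined by several geodesics, and Theorem~\ref{thm-clsce} must quantify over all $w \notin \mcl B_s(0;D_h)$, so no finite-arc covering argument can reduce to finitely many geodesics a priori. The mechanism used in~\cite{gm-confluence,dg-confluence} is, at a high level, an inductive branch-counting scheme: one first establishes that a.s.\ only finitely many points of $\bdy\mcl B_t(0;D_h)$ lie on geodesics from $0$ to $\bdy\mcl B_s(0;D_h)$; one then shows that, with probability uniformly bounded below conditionally on the field outside an annulus at each Euclidean scale, this count of geodesic crossing points strictly decreases when passing from one metric sphere to a smaller one; and one iterates via annulus-independence to drive the count down to one. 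Both the finiteness and the strict-decrease events require topological input about how geodesic branches carve the region between two metric spheres into sectors --- content that the bottleneck heuristic by itself does not supply, and which is the real work in those proofs.
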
 

Theorem~\ref{thm-clsce} only holds a.s.\ for a fixed center point $z\in\BB C$. Almost surely, there is a Lebesgue measure zero set of points in $\BB C$ where Theorem~\ref{thm-clsce} fails. For example, if $P : [0,T]\rta\BB C$ is a $D_h$-geodesic, then the conclusion of Theorem~\ref{thm-clsce} fails for each $z\in P((0,T))$.

Confluence of geodesics is used in the proof of the uniqueness of the $\gamma$-LQG metric $\gamma\in (0,2)$ in~\cite{gm-uniqueness}. Roughly speaking, confluence is used to establish near-independence for events which depend on small neighborhoods of far-away points on a $D_h$-geodesic, despite the fact that $D_h$-geodesics are non-Markovian and do not depend locally on $h$. See~\cite{gm-uniqueness} for details. The proof of the uniqueness of the LQG metric for general $\xi > 0$ in~\cite{dg-uniqueness} does not use confluence of geodesics.  

\begin{remark}
Confluence of geodesics was previously established by Le Gall~\cite{legall-geodesics} for the Brownian map, which is equivalent to $\sqrt{8/3}$-LQG (see Section~\ref{sec-lqg-tbm}). This result was used in the proof of the uniqueness of the Brownian map in~\cite{legall-uniqueness,miermont-brownian-map}. 
Le Gall's proof was very different from the proof of Theorem~\ref{thm-clsce}.  
\end{remark}

Various extensions of the confluence property for $\gamma \in (0,2)$ are proven in~\cite{lqg-zero-one,gwynne-geodesic-network} and for $\gamma=\sqrt{8/3}$ in~\cite{akm-geodesics,mq-strong-confluence}. 

Little is known about the geometry of a single LQG geodesic. For example, we do not know the Hausdorff dimension of such a geodesic w.r.t.\ the Euclidean metric (the dimension w.r.t.\ the LQG metric is trivially equal to 1), and we do not have any exact description of its law. The strongest current results in this direction are an upper bound for the Euclidean dimension of an LQG geodesic~\cite[Corollary 1.10]{gp-kpz}, which is not expected to be optimal; and the fact LQG geodesics do not locally look like SLE$_\kappa$ curves for any value of $\kappa$~\cite{mq-geodesics}. We do not have a non-trivial lower bound for the Euclidean Hausdorff dimension of an LQG geodesic, but we expect that it is strictly greater than 1 (see~\cite{ding-zhang-geodesic-dim} for a closely related result for the geodesics for a version of LFPP). Finally, we mention the very recent work~\cite{bbg-lqg-geodesic}, which constructs a local limit of the GFF near a typical point of an LQG geodesic.

\subsection{Metric balls} 
\label{sec-ball}

From the simulations in Figure~\ref{fig-ball-sim}, one can see that LQG metric balls have a fractal-like geometry. Almost surely, the complement of each LQG metric ball has infinitely many connected components, in the subcritical, critical, and supercritical cases~\cite{lqg-zero-one,pfeffer-supercritical-lqg}. In fact, a.s.\ ``most" points on the boundary of the ball do not lie on any complementary connected component, but rather are accumulation points of arbitrarily small complementary connected components~\cite[Theorem 1.14]{lqg-zero-one}, \cite[Theorem 1.4]{dg-confluence}.  

In the subcritical and critical cases, i.e., when $\xi = \gamma/d_\gamma$ for $\gamma\in (0,2]$, the LQG metric induces the same topology as the Euclidean metric so a.s.\ each closed LQG metric ball is equal to the closure Euclidean interior. In contrast, in the supercritical case a.s.\ each LQG metric ball has empty Euclidean interior but positive Lebesgue measure. This is a consequence of the fact that the set of singular points from~\eqref{eqn-singular} is Euclidean-dense but has Lebesgue measure zero. 

In the subcritical case, it is shown in~\cite{gwynne-ball-bdy,lqg-zero-one} that a.s.\ the Hausdorff dimension of the boundary of a $\gamma$-LQG metric ball for $\gamma\in(0,2)$ w.r.t.\ the Euclidean (resp.\ LQG) metric is $2-\xi Q +\xi^2/2$ (resp.\ $d_\gamma-1$). 
We expect that these formulas are also valid for $\gamma =2$ (equivalently, $\xi = 2/d_2$). 

In the supercritical case $\xi > 2/d_2$, the LQG metric $D_h$ does not induce the Euclidean topology, so one has to make a distinction between the boundary with respect to the Euclidean topology or with respect to $D_h$. The boundary of a closed $D_h$-metric ball with respect to the Euclidean topology is equal to the ball itself (since the ball is Euclidean closed and has empty Euclidean interior), whereas the boundary with respect to $D_h$ is a proper subset of the ball~\cite[Section 1.2]{dg-confluence}. 
It is shown in~\cite[Proposition 1.14]{pfeffer-supercritical-lqg} that for $\xi > 2/d_2$, a.s.\ the Euclidean boundary of a $D_h$-metric ball (i.e., the whole $D_h$-metric ball) is not compact with respect to $D_h$ and has infinite Hausdorff dimension w.r.t.\ $D_h$.
We expect that the same is true for the $D_h$-boundary of a $D_h$-metric ball. 
The Hausdorff dimension of the Euclidean boundary of a $D_h$-metric ball with respect to the Euclidean metric is 2 since the metric ball has positive Lebesgue measure. 
The Hausdorff dimension of the $D_h$-boundary of a $D_h$-metric ball with respect to the Euclidean metric has not been computed rigorously. 

It is also of interest to consider the boundary of a single complementary connected component of an LQG metric ball. The Hausdorff dimension of such a boundary component w.r.t.\ the Euclidean or LQG metric is not known. However, it is known that, even in the supercritical case, each  boundary component is a Jordan curve and is compact and finite-dimensional w.r.t.\ $D_h$~\cite[Theorem 1.4]{dg-confluence}.

\subsection{KPZ formula} 
\label{sec-kpz}

The (geometric) Knizhnik-Polyakov-Zamolodchikov (KPZ) formula~\cite{kpz-scaling} is a formula which relates the ``Euclidean dimension" and the ``LQG dimension" of a deterministic set $X\subset \BB C$, or a random set independent from the GFF $h$. The first rigorous versions of the KPZ formula appeared in~\cite{shef-kpz,rhodes-vargas-log-kpz}. These papers defined the ``LQG dimension" in terms of the LQG area measure. There are several different versions of the KPZ formula in the literature which use different notions of dimension (see, e.g.,~\cite{aru-kpz,ghm-kpz,grv-kpz,bjrv-gmt-duality,benjamini-schramm-cascades}). Here, we state what is perhaps the most natural version of the KPZ formula, where we compare the Hausdorff dimensions of a set w.r.t.\ the LQG metric and the Euclidean metric. We start with the subcritical case, which is~\cite[Theorem 1.4]{gp-kpz}.

\begin{thm}[\!\!\cite{gp-kpz}] \label{thm-kpz} 
Let $\gamma \in (0,2)$ and recall that $\xi = \gamma/d_\gamma$ and $Q = 2/\gamma+\gamma/2$. 
Let $X\subset\BB C$ be a random Borel set which is independent from the GFF $h$ and let $\Delta_0$ be the Hausdorff dimension of $X$, equipped the Euclidean metric.
Also let $\Delta_h$ be the Hausdorff dimension of $X$, equipped with the $\gamma$-LQG metric $D_h$. 
Then a.s.\ 
\eqb  \label{eqn-kpz}
\Delta_h = \xi^{-1} (Q -  \sqrt{Q^2 - 2\Delta_0})   . 
\eqe 
\end{thm}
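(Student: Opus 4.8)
The plan is to prove the two inequalities $\Delta_h \leq \xi^{-1}(Q - \sqrt{Q^2 - 2\Delta_0})$ and $\Delta_h \geq \xi^{-1}(Q - \sqrt{Q^2 - 2\Delta_0})$ separately, via a covering argument that relates Euclidean balls to LQG balls. The fundamental input is a uniform estimate comparing the $D_h$-diameter of a Euclidean ball $B_r(z)$ to a deterministic power of $r$, corrected by the circle-average field: using the scaling relation~\eqref{eqn-metric-law}, for a whole-plane GFF one has that $e^{-\xi h_r(z)} r^{-\xi Q} D_h(r\cdot + z, r\cdot + z) \eqD D_h$, so the $D_h$-diameter of $B_r(z)$ is, up to a random factor of order $1$ (with good tail bounds, from the moment estimates in Section~\ref{sec-estimates}), comparable to $e^{\xi h_r(z)} r^{\xi Q}$. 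Since $h_r(z)$ is Gaussian with variance $\log(1/r) + O(1)$, for a fixed $z$ the dominant behavior of this quantity is $r^{\xi Q + o(1)}$, but over a net of $\sim r^{-2}$ points one sees fluctuations: the number of $r$-balls in a fixed bounded region whose $D_h$-diameter exceeds $r^{\xi(Q - u)}$ is governed by the Gaussian tail $\mathbb{P}[h_r(z) \geq u \log(1/r)] \approx r^{u^2/2}$.

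The first main step is to set up the \emph{multiscale covering} machinery. Cover $X$ (which is independent of $h$) by Euclidean balls of a common small radius $r = 2^{-k}$; condition on $X$ and work with a deterministic covering realizing the Euclidean Hausdorff content, so that for any $\Delta_0' > \Delta_0$ the number of $r$-balls needed is at most $r^{-\Delta_0'}$. Each such Euclidean ball $B$, when measured in $D_h$, is contained in a $D_h$-ball whose radius is the $D_h$-diameter of $B$, which by the estimate above is at most $r^{\xi(Q - u_B)}$ where $u_B$ records the (normalized) circle average. To compute the $\Delta$-dimensional $D_h$-Hausdorff content of $X$ we must sum $(r^{\xi(Q-u_B)})^\Delta$ over the covering balls. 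The key is to split the covering balls according to the dyadic value of $u_B$: there are at most $r^{-\Delta_0'} \cdot r^{u^2/2 + o(1)}$ balls with $u_B \approx u$ (union bound over the covering using the Gaussian tail, since $X \perp h$), each contributing $r^{\Delta \xi (Q - u)}$. So the content is bounded by $\sum_u r^{-\Delta_0' + u^2/2 + \Delta\xi(Q-u) + o(1)}$, and this tends to $0$ as $r \to 0$ provided $\min_u \big(u^2/2 - \Delta\xi u\big) + \Delta\xi Q - \Delta_0' > 0$. The minimum over $u \in \mathbb{R}$ of $u^2/2 - \Delta\xi u$ is $-\Delta^2\xi^2/2$, attained at $u = \Delta\xi$; substituting gives the condition $\Delta\xi Q - \Delta^2\xi^2/2 > \Delta_0'$, i.e.\ $\Delta_0' < \Delta\xi Q - \tfrac12 \Delta^2 \xi^2$. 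Solving for $\Delta$, the content vanishes whenever $\Delta > \xi^{-1}(Q - \sqrt{Q^2 - 2\Delta_0'})$; letting $\Delta_0' \downarrow \Delta_0$ yields the upper bound $\Delta_h \leq \xi^{-1}(Q - \sqrt{Q^2 - 2\Delta_0})$.

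For the matching lower bound, the standard route is a Frostman-type (mass distribution) argument: pushing forward a measure on $X$ witnessing Euclidean dimension $\Delta_0$ (or, more robustly, running the covering argument in reverse — any $D_h$-covering of $X$ by small $D_h$-balls induces a Euclidean covering, and one shows that a $D_h$-ball of radius $\rho$ centered near a point of $X$ cannot have too small Euclidean diameter unless the field is atypically negative there, again quantified by a Gaussian tail, now for the lower tail of $h_r(z)$). This gives that the $\Delta$-dimensional $D_h$-content of $X$ is positive for $\Delta < \xi^{-1}(Q - \sqrt{Q^2 - 2\Delta_0})$. The \textbf{main obstacle} is making the two-sided comparison between $D_h$-balls and Euclidean balls \emph{uniform} over the $r^{-2}$ (or more) points of the dyadic net, with tail bounds strong enough to survive the union bound against the $\sim r^{-\Delta_0'}$ covering balls and the sum over $\sim \log(1/r)$ dyadic values of $u$; in particular, controlling the probability that $B_r(z)$ has $D_h$-diameter much larger than $e^{\xi h_r(z)} r^{\xi Q}$, or that a $D_h$-ball fails to contain a Euclidean ball of the expected size, requires the quantitative metric estimates (Hölder continuity and polynomial moment bounds for diameters and point-to-point distances from Section~\ref{sec-estimates}), together with a Borel--Cantelli argument over dyadic scales $r = 2^{-k}$. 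A secondary subtlety is that the relevant range of $u$ near the optimizer $u = \Delta\xi$ must be shown to be attainable, i.e.\ the Gaussian tail estimate must hold for the values of $u$ that dominate the sum, which is where the independence $X \perp h$ is essential — it lets us apply the tail bound after conditioning on $X$.
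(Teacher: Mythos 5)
Your upper bound is the right idea and is essentially the strategy used in~\cite{gp-kpz}: cover $X$ by small Euclidean balls, stratify by the normalized circle average $u$, use the scaling relation~\eqref{eqn-metric-law} plus the diameter moment bounds to compare the $D_h$-diameter of a ball at a $u$-thick point to $r^{\xi(Q-u)}$, pay the Gaussian cost $r^{u^2/2}$ for each stratum, and optimize over $u$ (with first moment + Markov + Borel--Cantelli over dyadic scales to get the almost sure statement). One technical caveat you gloss over is that you cover by balls of a \emph{single} radius $r$, which controls Minkowski rather than Hausdorff dimension; the actual argument must allow coverings by balls of different radii and therefore needs the diameter comparison to hold uniformly over a range of scales simultaneously, which is where the independence-across-scales/annuli machinery is used.

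The lower bound is where there is a genuine gap. Your first route --- pushing forward a Euclidean Frostman measure $\mu$ on $X$ --- does \emph{not} give the KPZ exponent. For $\mu$-a.e.\ $z$ (independent of $h$) the normalized circle average tends to $0$, so a $D_h$-ball of radius $\rho$ about $z$ corresponds to a Euclidean ball of radius $r \approx \rho^{1/(\xi Q)}$, and the mass distribution principle yields only $\Delta_h \geq \Delta_0/(\xi Q)$. This is strictly smaller than $\xi^{-1}(Q-\sqrt{Q^2-2\Delta_0})$ for every $\Delta_0\in(0,2]$ (the two agree only at $\Delta_0=0$), so the naive Frostman push-forward provably falls short. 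The sharp lower bound requires a measure concentrated on $X\cap\mcl T_h^{u^*}$ for the optimal thickness $u^* = Q - \sqrt{Q^2-2\Delta_0}$ (which is strictly positive): one must first show that $X\cap\mcl T_h^{u}$ has Euclidean Hausdorff dimension $\Delta_0 - u^2/2$ (using the independence of $X$ and $h$), and then the Frostman argument applied to a measure on this intersection gives local exponent $(\Delta_0-u^2/2)/(\xi(Q-u))$, which maximizes over $u$ to exactly $\xi^{-1}(Q-\sqrt{Q^2-2\Delta_0})$. Your alternative ``reverse covering'' sketch also does not engage with the real adversary: you wrote that a $D_h$-ball ``cannot have too small Euclidean diameter unless the field is atypically negative,'' but the sign is reversed --- a $D_h$-ball at a thin point is Euclidean \emph{large}, and the danger is that an efficient $D_h$-covering uses a few balls at very thin points whose Euclidean footprints are huge. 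Ruling this out is precisely the content one would need to prove, and it again reduces to controlling the (independent) intersection of $X$ with thick/thin level sets of $h$. So the lower bound as written would not close, and the optimal thick-point intersection is the missing ingredient.
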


Theorem~\ref{thm-kpz} does not apply if $X$ is not independent from $h$. For example, the KPZ formula does not hold for the Hausdorff dimensions of LQG metric ball boundaries w.r.t.\ the Euclidean and LQG metrics, as discussed in Section~\ref{sec-ball}. However, one has inequalities relating the Hausdorff dimensions of an arbitrary set with respect to the Euclidean and LQG metrics, see~\cite[Theorem 1.8]{gp-kpz}. 

It is shown in~\cite[Theorem~1.15]{pfeffer-supercritical-lqg} that the KPZ formula of Theorem~\ref{thm-kpz} extends to the case when $\xi \geq 2/d_2$ (modulo some technicalities about the particular notion of ``fractal dimension" involved), with the following important caveat. When $\xi  > 2/d_2$, we have $Q  \in (0,2)$ and the right side of the formula~\eqref{eqn-kpz} is non-real when $\Delta_0 > Q^2/2$. The extension of the KPZ formula to the supercritical case coincides with~\eqref{eqn-kpz} when $\Delta_0 < Q^2/2$, and gives $\Delta_h = \infty$ when $\Delta_0 > Q^2/2$ (the case when $\Delta_0 =Q^2/2$ is not treated).

\section{Tools for studying the LQG metric}
\label{sec-proofs}

There are a few basic techniques which are the starting point of the majority of the proofs of statements involving the LQG metric.  In this subsection, we will discuss a few of the most important such techniques and provide some simple examples of their applications. Throughout, $h$ denotes a whole-plane GFF and $D_h$ denotes an LQG metric in the sense of Definition~\ref{def-metric}. For simplicity, we assume that we are in the subcritical case but our discussion applies in the critical and supercritical cases as well, with only minor modifications. 

\subsection{Adding a bump function} 
\label{sec-bump}

Suppose that $E$ is an event depending on the LQG metric $D_h$. For example, maybe we have two points $z,w\in\BB C$ and $E$ is the event that $D_h(z,w) > 100$, or that the $D_h$-geodesic from $z$ to $w$ stays in some specified open set. For many choices of $E$, it is straightforward to show that $\BB P[E] > 0$ via the following method. 
Let $\phi$ be a deterministic smooth, compactly supported function. It is easy to see from basic properties of the GFF that the laws of $h$ and $h+\phi$ are mutually absolutely continuous. See, e.g.,~\cite[Proposition 3.4]{ig1} for a proof. Using Weyl scaling (Axiom~\ref{item-metric-f}), we can choose $\phi$ so that with high probability, the event $E$ occurs with $h+\phi$ in place of $h$. The absolute continuity of the laws of $h+\phi$ and $h$ then implies that $\BB P[E] > 0$. Let us illustrate this idea by showing that an LQG geodesic stays in a specified open set with positive probability.

\begin{lem} \label{lem-geo-tube}
Let $z,w\in\BB C$ and let $U\subset\BB C$ be a connected open set which contains $z$ and $w$. 
With positive probability, every $D_h$-geodesic from $z$ to $w$ is contained in $U$.
\end{lem}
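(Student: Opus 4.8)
The plan is to apply the ``adding a bump function'' technique from the start of this section. If $U = \BB C$ there is nothing to prove, so assume $U\neq\BB C$. Since $U$ is open and connected it is path-connected, so fix a continuous path $\eta:[0,1]\to U$ from $z$ to $w$. Its image is a compact subset of the open set $U$, so $\rho := \op{dist}(\eta([0,1]),\BB C\setminus U) > 0$. For $i\in\{1,2,3\}$ let $B_i$ be the open Euclidean $(i\rho/4)$-neighborhood of $\eta([0,1])$; then each $B_i$ is connected, $z,w\in B_1$, $\overline{B_1}\subset B_2$, $\overline{B_2}\subset B_3$, and $\overline{B_3}\subset U$. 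The strategy is to choose a bump function that is very negative on the inner tube $B_1$ --- so that a path running along $B_1\subset U$ becomes extremely short --- but that vanishes outside the slightly larger tube $B_2$, so that any competing path which leaves $U$ must first cross the ``buffer region'' $B_3\setminus\overline{B_2}$, on which the metric is unchanged and hence whose crossing has a fixed positive $D_h$-cost.

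For $M > 0$, let $\phi = \phi_M:\BB C\to\BB R$ be a deterministic smooth function with $\phi\leq 0$, $\phi\equiv -M$ on $\overline{B_1}$, and $\phi\equiv 0$ on $\BB C\setminus B_2$; in particular $\phi$ is compactly supported, so the laws of $h$ and $h+\phi$ are mutually absolutely continuous (see~\cite[Proposition 3.4]{ig1}). Two a.s.\ estimates are needed. First, since $D_h$ is a.s.\ a length metric inducing the Euclidean topology, the internal distance $C := D_h(z,w;B_1)$ is a.s.\ finite (a standard property of internal metrics of the LQG metric; see~\cite{local-metrics,lqg-metric-estimates}), and since $\phi\equiv -M$ on $\overline{B_1}$, Weyl scaling (Axiom~\ref{item-metric-f}) gives $D_{h+\phi}(z,w)\leq D_{h+\phi}(z,w;B_1) = e^{-\xi M}C$. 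Second, $c_0 := D_h(\overline{B_2},\partial B_3) > 0$ a.s., since $D_h$ induces the Euclidean topology and $\overline{B_2}$, $\partial B_3$ are disjoint nonempty compact sets. Now let $\gamma$ be any $D_{h+\phi}$-geodesic from $z$ to $w$ with $\gamma\not\subset U$. Since $\overline{B_3}\subset U$, the path $\gamma$ exits $B_3$; let $T$ be its first exit time from $B_3$ (so $\gamma(T)\in\partial B_3$) and $S\in[0,T)$ the last time before $T$ at which $\gamma$ lies in $\overline{B_2}$. On $(S,T]$ the path avoids $\overline{B_2}\supseteq\op{supp}\phi$, so $\phi$ vanishes along $\gamma|_{(S,T]}$, and Weyl scaling gives $\op{len}(\gamma|_{[S,T]};D_{h+\phi}) = \op{len}(\gamma|_{[S,T]};D_h)\geq D_h(\gamma(S),\gamma(T))\geq c_0$; hence $D_{h+\phi}(z,w) = \op{len}(\gamma;D_{h+\phi})\geq c_0$.

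Combining the two bounds, on the event $\mcl G_M := \{e^{-\xi M}C < c_0\}$ no $D_{h+\phi}$-geodesic from $z$ to $w$ can exit $U$. Since $C < \infty$ and $c_0 > 0$ a.s., $\BB P[\mcl G_M]\to 1$ as $M\to\infty$, so we may fix $M$ with $\BB P[\mcl G_M] > 0$; then $\phi = \phi_M$ is a fixed deterministic function, and we have shown that with positive probability every $D_{h+\phi}$-geodesic from $z$ to $w$ is contained in $U$. Since the law of $h+\phi$ is mutually absolutely continuous with respect to the law of $h$, the same holds with $h$ in place of $h+\phi$, which is exactly the lemma. (The statement is non-vacuous: $D_{h+\phi}(z,w)<\infty$ and $D_{h+\phi}$ is a length metric inducing the Euclidean topology, so $D_{h+\phi}$-geodesics from $z$ to $w$ exist, as recalled at the start of Section~\ref{sec-geodesic}.)

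The step requiring genuine care --- the main obstacle --- is the lower bound for escaping geodesics. It is essential that $\phi$ return to $0$ on a fixed annular buffer lying strictly inside $U$: if instead $\phi$ were taken very negative all the way out to near $\partial U$, the cost of crossing that region would be multiplied by $e^{-\xi M}$ and would tend to $0$ together with the cost $e^{-\xi M}C$ of the good path, so no contradiction would ensue. The only other nontrivial input is the a.s.\ finiteness of the internal metric $D_h(\cdot,\cdot;B_1)$, which is standard.
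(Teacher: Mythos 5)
Your proof is correct and uses essentially the same strategy as the paper: add (or subtract) a compactly supported bump to $h$ that makes the internal distance in an inner tube around a path from $z$ to $w$ very small while leaving unchanged the $D_h$-distance needed to cross a fixed buffer annulus lying strictly inside $U$, then conclude via the mutual absolute continuity of the laws of $h$ and $h+\phi$. The only cosmetic differences are that the paper subtracts a nonnegative bump of carefully chosen height $\tfrac{2}{\xi}\log C$ on abstract sets $V\subset V'\subset U$ (choosing $C$ so the relevant regularity event has probability at least $1/2$), while you add a nonpositive bump of height $-M$ on an explicit tubular neighborhood and send $M\to\infty$; these are interchangeable.
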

\begin{proof}
Let $V \subset V' \subset U$ be bounded, connected open sets containing $z$ and $w$ such that $\ol V \subset V'$ and $\ol V'\subset U$. It is a.s.\ the case that internal distance $D_h(z,w;V)$ is finite and the distance $D_h( V' , \bdy U)$ is positive, so we can find $C>0$ such that 
\eqb \label{eqn-geo-tube-reg}
\BB P\left[ D_h(z,w;V) \leq C ,\: D_h( V' , \bdy U) > C^{-1} \right] \geq \frac12 .  
\eqe 

Let $\phi$ be a smooth, non-negative bump function which is identically equal to $\frac{2}{\xi}\log C$ on $V$ and is identically equal to zero outside of $V'$. 
By Weyl scaling (Axiom~\ref{item-metric-f}) and since $\phi\equiv \frac{2}{\xi}\log C  $ on $V$, the $D_{h -  \phi}$-internal metric on $V$ is equal to $C^{-2}$ times the $D_h$-internal metric on $V$. Furthermore, since $\phi\equiv 0$ outside $V'$, we have $D_h( V' , \bdy U) = D_{h-\phi}(  V' , \bdy U)$. 
Therefore, if the event in~\eqref{eqn-geo-tube-reg} occurs, then
\eqbn
D_{h -  \phi}(z,w ; V) 
= C^{-2} D_h(z,w;V) 
\leq C^{-1} 
< D_h(\bdy V' , \bdy U) 
= D_{h-\phi}( V' , \bdy U) .
\eqen
In particular, $D_{h-\phi}(z,w)  < D_{h-\phi}(z,\bdy U)$. Therefore, no $D_{h-\phi}$-geodesic from $z$ to $w$ can exit $U$. This happens with probability at least $1/2$. Since the laws of $h-\phi$ and $h$ are mutually absolutely continuous, the lemma statement follows.
\end{proof}

In a similar vein, it is sometimes useful to add a \emph{random} bump function to $h$ in order to show that $D_h$ has certain ``typical" behavior with probability 1. To be more precise, again let $\phi$ be a smooth compactly supported bump function and let $X$ be a random variable which is uniform on $[0,1]$, sampled independently from $h$. Then the laws of $h$ and $h+X\phi$ are mutually absolutely continuous. So, if $E $ is an event depending on $D_h$, then to show that $\BB P[E] = 0$ it suffices to show that the probability that $E$ occurs with $h+X\phi$ in place of $h$ is zero. To show this latter statement, it suffices to show that a.s.\ the Lebesgue measure of the set of $x \in [0,1]$ such that $E$ occurs with $h+x\phi$ in place of $h$ is zero. Usually, it is possible to show that this set consists of at most a single point. Let us illustrate this technique by proving the uniqueness of $D_h$-geodesics between typical points. 

\begin{lem} \label{lem-geo-unique}
Fix distinct points $z,w\in\BB C$. Almost surely, there is a unique $D_h$-geodesic from $z$ to $w$. 
\end{lem}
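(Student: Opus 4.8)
The plan is to use the random bump function technique sketched in the paragraph immediately before the lemma. Let $\phi$ be a smooth, non-negative bump function, supported in a small ball that is disjoint from $\{z,w\}$ but which any geodesic from $z$ to $w$ has a positive chance of entering, and let $X$ be uniform on $[0,1]$ independent of $h$. Since $h$ and $h + X\phi$ have mutually absolutely continuous laws, it suffices to show that a.s.\ there is a unique $D_{h+X\phi}$-geodesic from $z$ to $w$, and for this it is enough to show that a.s.\ the set of $x \in [0,1]$ for which $D_{h+x\phi}$ admits two distinct geodesics from $z$ to $w$ has Lebesgue measure zero. I would in fact argue that this set has at most one element.

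The key monotonicity input is Weyl scaling (Axiom~\ref{item-metric-f}): adding $x\phi$ to $h$ only increases the $e^{\xi(\cdot)}$-weight along any path, and the increase is strictly monotone in $x$ on any path that spends positive Lebesgue time in $\{\phi > 0\}$. So consider two values $x_1 < x_2$ in $[0,1]$ and suppose $P$ is a $D_{h+x_1\phi}$-geodesic from $z$ to $w$ that does \emph{not} enter the support of $\phi$; then $P$ has the same $D_{h+x_1\phi}$- and $D_{h+x_2\phi}$-length, while every competing path's length is non-decreasing in $x$, so $P$ is also a $D_{h+x_2\phi}$-geodesic. Conversely, if every $D_{h+x\phi}$-geodesic from $z$ to $w$ \emph{did} enter $\{\phi>0\}$ for two distinct parameters $x_1<x_2$, one derives a contradiction: let $P_i$ be a $D_{h+x_i\phi}$-geodesic; comparing the $D_{h+x_1\phi}$- and $D_{h+x_2\phi}$-lengths of $P_1$ and $P_2$ and using that $P_1$ spends positive time in $\{\phi>0\}$ gives a chain of strict inequalities showing $P_1$ cannot be $D_{h+x_2\phi}$-minimizing unless $P_2$ also avoids $\{\phi>0\}$, contradicting the assumption. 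Hence there is at most one ``bad'' parameter $x$, namely a threshold below which some geodesic avoids $\{\phi>0\}$ and at or above which none does; away from that single point the geodesic is unique. I would choose $\phi$ so that a.s.\ some $D_h$-geodesic enters $\{\phi>0\}$ and some does not for generic perturbations — more carefully, one picks $\phi$ small enough that for $x$ near $0$ there is a geodesic avoiding the support (by Lemma~\ref{lem-geo-tube}-type reasoning, geodesics are stable under small perturbations) and notes that uniqueness can only fail at the transition.

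The cleanest way to package the argument: fix $\phi \geq 0$ smooth with support disjoint from $\{z,w\}$. For $x \in [0,1]$ let $\mathcal G_x$ be the (nonempty, by the existence of geodesics recalled in Section~\ref{sec-geodesic}) set of $D_{h+x\phi}$-geodesics from $z$ to $w$, and let $A$ be the set of $x$ such that some element of $\mathcal G_x$ is disjoint from $\{\phi > 0\}$. By the monotonicity observation above, $A$ is a sub-interval of $[0,1]$ containing $0$ of the form $[0,x_*)$ or $[0,x_*]$, and for $x \in A$ any geodesic disjoint from $\{\phi>0\}$ is the \emph{unique} geodesic: a second geodesic would have to enter $\{\phi>0\}$, but then strictly increasing $x$ would strictly increase its length while leaving the first geodesic's length unchanged, so the first was already strictly shorter for $x' > x$ close to $x$ — pushing this back to $x$ by continuity of lengths in the parameter shows the first was already the unique minimizer. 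Similarly, for $x \notin A$, all geodesics meet $\{\phi>0\}$, and a comparison of two such geodesics at parameters $x$ and $x+\delta$ forces them to coincide. Thus non-uniqueness can occur only at the single endpoint $x_*$ (if at all), so $\{x : \#\mathcal G_x \geq 2\}$ has Lebesgue measure zero, and since $X$ is independent of $h$ with a density, a.s.\ $X \notin \{x : \#\mathcal G_x \geq 2\}$, i.e.\ a.s.\ $D_{h+X\phi}$ has a unique geodesic from $z$ to $w$. Absolute continuity of the laws of $h+X\phi$ and $h$ finishes the proof.

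The main obstacle is making the monotonicity/continuity comparison fully rigorous — in particular verifying that $D_{h+x\phi}(z,w)$ and geodesic lengths vary continuously (indeed, monotonically and with a quantitative strict gain) in $x$, and handling the possibility that a geodesic touches $\partial\{\phi>0\}$ in a Lebesgue-null but topologically nontrivial way. I would address this by choosing $\phi$ with $\{\phi > 0\}$ an open Euclidean ball $B$ with $\bar B \cap \{z,w\} = \emptyset$ and arguing that any path spending zero Lebesgue time in $B$ can be taken to avoid $B$ entirely without increasing its $D_{h+x\phi}$-length (since the weight on $B$ is only relevant on a positive-measure portion), which reduces everything to the clean dichotomy ``geodesic avoids $B$'' versus ``geodesic spends positive time in $B$.'' Everything else is soft metric-space theory plus Weyl scaling.
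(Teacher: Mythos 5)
Your instinct to perturb by a random bump function and exploit monotonicity in the perturbation parameter is the right one, but as written the argument has a genuine gap: a single fixed bump function $\phi$ cannot separate every pair of distinct geodesics. Concretely, if at some parameter $x$ there are two distinct $D_{h+x\phi}$-geodesics from $z$ to $w$ which \emph{both} avoid $\op{supp}\phi$, then both have $x$-independent length while all other paths' lengths are non-decreasing in $x$, so both remain geodesics for every $x' \ge x$ and the perturbation never distinguishes them. The complementary case is also unjustified: you assert that when all geodesics meet $\{\phi>0\}$, "a comparison at parameters $x$ and $x+\delta$ forces them to coincide," but no such comparison is supplied, and in general a $D_{h+(x+\delta)\phi}$-geodesic need not be either of the two $D_{h+x\phi}$-geodesics you started with. (A smaller slip: the set $A$ of parameters admitting a $B$-avoiding geodesic is up-closed, not down-closed, since increasing the weight inside $B$ only helps a $B$-avoiding competitor; so $A$ has the form $[x_*,1]$ or $(x_*,1]$, not $[0,x_*)$.)

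The paper closes this gap with one extra reduction. If there are two distinct geodesics $P \ne \wt P$, then by compactness of $P$ some point of $\wt P$ has a neighborhood disjoint from $P$, so one can find open sets $V \subset \ol V \subset U$, drawn from a fixed \emph{countable} family (finite unions of rational balls) away from $z,w$, with $P \cap U = \emptyset$ and $\wt P \cap V \ne \emptyset$. It therefore suffices to fix one such pair $(U,V)$ and show that the event $E(U,V)$ --- that there exist geodesics $P,\wt P$ with $P$ disjoint from $U$ and $\wt P$ meeting $V$ --- has probability zero. For this event the bump-function argument is clean: take $\phi$ smooth with $\phi \equiv 1$ on a neighborhood of $\ol V$ and $\phi \equiv 0$ off $U$, and let $E_x$ be $E(U,V)$ for $h+x\phi$. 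If $E_x$ occurs and $y>x$, the $U$-avoiding geodesic has unchanged $D_{h+y\phi}$-length, forcing $D_{h+y\phi}(z,w) = D_{h+x\phi}(z,w)$; but any path entering $V$ must traverse a positive-length stretch of $\phi^{-1}(1)$ and so has $D_{h+y\phi}$-length strictly exceeding $D_{h+y\phi}(z,w)$. Hence $E_y$ fails, $\{x\in[0,1] : E_x \text{ occurs}\}$ is at most a singleton, and absolute continuity finishes as you described. Your proposal has all the right ingredients; the missing step is the countable enumeration over $(U,V)$ pairs, which is what makes the "avoids/enters" dichotomy actually cover every instance of non-uniqueness.
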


Lemma~\ref{lem-geo-unique} was first established in~\cite[Theorem 1.2]{mq-geodesics} via an argument which is similar to, but more complicated than, the one we give here.
We emphasize that Lemma~\ref{lem-geo-unique} applies only for a fixed pair of points $z,w\in\BB C$. Almost surely, there are exceptional pairs of points which are joined by multiple $D_h$-geodesics. See~\cite{akm-geodesics,gwynne-geodesic-network,mq-strong-confluence} for a discussion of these exceptional pairs of points.

\begin{proof}[Proof of Lemma~\ref{lem-geo-unique}]
Let $U,V\subset\BB C$ be bounded open sets lying at positive distance from $z$ and $w$ such that $\ol V \subset U$. 
Let $E = E(U,V)$ be the event that the following is true: there are distinct $D_h$-geodesics $P,\wt P$ from $z$ to $w$ such that $P$ is disjoint from $U$ and $\wt P$ enters $V$. If there is more than one $D_h$-geodesic from $z$ to $w$, then $E(U,V)$ must occur for some choice of open sets $U, V$ which we can take to be finite unions of balls with rational centers and radii. Hence it suffices to fix $U$ and $V$ and show that $\BB P[E] = 0$. 

Let $\phi : \BB C \rta [0,1]$ be a smooth bump function which is identically equal to 1 on a neighborhood of $\ol V$ and which vanishes outside of $U$. 
For $x\in [0,1]$, let $E_x$ be the event that $E$ occurs with $h + x \phi$ in place of $h$. As explained above the lemma statement, it suffices to prove that a.s.\ the Lebesgue measure of the set of $x\in [0,1]$ for which $E_x$ occurs is 0. In fact, we will show that a.s.\ there is at most one values of $x\in [0,1]$ for which $E_x$ occurs. 

For this, it is enough to show that if $0 \leq x < y \leq 1$ and $E_x$ occurs, then $E_y$ does not occur. To see this, assume that $E_x$ occurs and let $P_x$ and $\wt P_x$ be the $D_{h-x\phi}$-geodesics as in the definition of $E_x$. By Weyl scaling (Axiom~\ref{item-metric-f}) and since $\phi$ is non-negative, we have $D_{h+y\phi}(u,v) \geq D_{h+x\phi}(u,v)$ for all $u,v\in\BB C$. 
Since $P_x$ does not enter $U$ and $\phi$ vanishes outside of $U$, we also have
\eqbn
D_{h+y\phi}(z,w)
\leq \op{len}\left( P_x ; D_{h+y\phi}\right)
= \op{len}\left( P_x ; D_{h+x\phi}\right)
= D_{h+x\phi}(z,w) ,
\eqen 
where here we recall the notation for length w.r.t.\ a metric from~\eqref{eqn-length}. 
Hence
\eqb \label{eqn-xy-dist}
D_{h+y\phi}(z,w) = D_{h+x\phi}(z,w).
\eqe 

Now suppose that $\wt P : [0,T]\rta \BB C$ is any path from $z$ to $w$ which enters $V$. We will show that $\wt P$ is not a $D_{h+y\phi}$-geodesic, which implies that $E_y$ does not occur. Indeed, there must be a positive-length interval of times $[a,b]$ such that $P([a,b]) \subset \phi^{-1}(1)$. We therefore have
\alb
\op{len}\left( \wt P ;  D_{h+y\phi} \right)
&= \op{len}\left( \wt P|_{[0,a] \cup [b,T]} ;  D_{h+y\phi} \right) + \op{len}\left( \wt P|_{[a,b]} ;  D_{h+y\phi} \right) \notag\\
&\geq \op{len}\left( \wt P|_{[0,a] \cup [b,T]} ;  D_{h+x\phi} \right) + e^{\xi (y-x)} \op{len}\left( \wt P|_{[a,b]} ;  D_{h+x\phi} \right)  \quad \text{(by Axiom~\ref{item-metric-f})} \notag\\
&\geq \op{len}\left( \wt P   ;  D_{h+x\phi} \right) + ( e^{\xi (y-x)}  -1 ) \op{len}\left( \wt P|_{[a,b]} ;  D_{h+x\phi} \right)  \notag\\
&> D_{h+x\phi}(z,w) \quad \text{(by Axiom~\ref{item-metric-length})}  \\
&= D_{h+y\phi}(z,w) \quad \text{(by~\eqref{eqn-xy-dist}) }.
\ale 
\end{proof}

\begin{remark}
If $\phi$ is a deterministic smooth bump function, then the proof of~\cite[Proposition 3.4]{ig1} shows that the Radon-Nikodym derivative of the law of $h+\phi$ w.r.t.\ the law of $h$ is given by
\eqbn
\exp\left((h,\phi)_\nabla - \frac12 (\phi,\phi)_\nabla \right) 
\eqen
where $(f,g)_\nabla := \int_{\BB C} \nabla f(z) \cdot \nabla g(z) \,d^2z$ is the Dirichlet inner product. One can use this explicit expression for the Radon-Nikodym derivative together with arguments of the sort discussed above to estimate the probabilities of certain rare events for the LQG metric. For example, this is the key idea in the computation of the dimension of a boundary of an LQG metric ball in~\cite{gwynne-ball-bdy}.
\end{remark}

\subsection{Independence across concentric annuli} 
\label{sec-annulus-iterate}

Another key tool in the study of the LQG metric is the fact that the restrictions of the GFF to disjoint concentric annuli (viewed modulo additive constant) are nearly independent. In particular, suppose that we have a sequence of events $\{E_{r_k}\}_{k\in\BB N}$ depending on the restrictions of $h$ to disjoint concentric annuli. If we have a lower bound for $\BB P[E_{r_k}]$ which is uniform in $k$, then for $K\in\BB N$ the number of $k\in \{1,\dots,K\}$ for which $E_{r_k}$ occurs can be compared to a binomial random variable. This leads to the following lemma, which is a special case of~\cite[Lemma~3.1]{local-metrics}.

\begin{lem}  \label{lem-annulus-iterate}
Fix $0 < s_1< s_2 < 1$. Let $z\in\BB C$ and let $\{r_k\}_{k\in\BB N}$ be a decreasing sequence of positive real numbers such that $r_{k+1} / r_k \leq s_1$ for each $k\in\BB N$. Let $\{E_{r_k} \}_{k\in\BB N}$ be events such that for each $k\in \BB N$, the event $E_{r_k} $ is a.s.\ determined by the restriction of $h - h_{r_k}(z)$ to the Euclidean annulus $B_{s_2 r_k}(z) \setminus B_{s_1 r_k}(z)$, where $h_{r_k}(z)$ denotes the circle average.  
\begin{enumerate}
\item \label{item-annulus-iterate} For each $a > 0$, there exists $p = p(a, s_1,s_2) \in (0,1)$ and $c = c(a, s_1,s_2) > 0$ such that if  
\eqb \label{eqn-annulus-iterate-prob}
\BB P\left[ E_{r_k}  \right] \geq p , \quad \forall k\in\BB N  ,
\eqe 
then 
\eqb \label{eqn-annulus-iterate}
\BB P\left[ \text{$\exists k\in \{1,\dots,K\}$ such that $E_{r_k}$ occurs} \right] \geq 1 - c e^{-a K} ,\quad\forall K \in \BB N. 
\eqe  
\item For each $p \in (0,1)$, there exists $a = a(p,s_1,s_2) > 0$ and $c = c(p, s_1,s_2) > 0$ such that if~\eqref{eqn-annulus-iterate-prob} holds, then~\eqref{eqn-annulus-iterate} holds. \label{item-annulus-iterate-pos}  
\end{enumerate}
\end{lem}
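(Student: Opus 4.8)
The plan is to reduce the statement to a comparison with a binomial random variable via the near-independence of the GFF across disjoint concentric annuli. The key point is that the events $E_{r_k}$ are determined by the restrictions of $h - h_{r_k}(z)$ to the annuli $A_k := B_{s_2 r_k}(z) \setminus B_{s_1 r_k}(z)$, and since $r_{k+1}/r_k \leq s_1 < s_2$, the annuli $\{A_k\}_{k\in\BB N}$ are pairwise disjoint. If the field restricted to these annuli (modulo additive constant) were exactly independent, then the indicators $\mathbbm 1_{E_{r_k}}$ would be independent, and the number of $k \in \{1,\dots,K\}$ for which $E_{r_k}$ occurs would stochastically dominate a $\mathrm{Binomial}(K, p)$ random variable under the hypothesis~\eqref{eqn-annulus-iterate-prob}; the probability that such a binomial is zero is $(1-p)^K$, which decays exponentially. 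So the real work is to replace exact independence by approximate independence with quantitative control.

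First I would recall the standard decomposition of the whole-plane GFF: one can write $h = \mathfrak h + \mathfrak h^0$ where $\mathfrak h^0$ is harmonic on $B_{s_2 r_k}(z)$ and $\mathfrak h$ is the sum of a GFF with zero boundary conditions on $B_{s_2 r_k}(z)$ plus the parts of the field coming from outside. The relevant statement, which is by now standard in this literature (see~\cite{local-metrics} and the references therein), is that the laws of the restrictions of $h - h_{r_k}(z)$ to the annuli $A_k$ can be coupled with a sequence of genuinely independent fields so that the total variation distance between the joint laws, restricted to the first $K$ annuli, is bounded — or more precisely, one conditions on the ``outermost'' data and uses that the conditional law of the innermost annulus is comparable to its unconditional law. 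Concretely, I would use the Markov property of the GFF iteratively from the outside in: conditionally on $h|_{\BB C \setminus B_{s_2 r_k}(z)}$, the field inside is a zero-boundary GFF plus the harmonic extension; the harmonic extension restricted to the much smaller annulus $A_{k+1} \subset B_{s_1 r_k}(z)$ is close to constant, hence has small effect on the event $E_{r_{k+1}}$, which only depends on the field modulo additive constant. This gives a one-step bound $\BB P[E_{r_{k+1}} \mid \mathcal F_k] \geq p'$ for a slightly smaller $p' = p'(p, s_1, s_2)$, uniformly, where $\mathcal F_k$ is the $\sigma$-algebra generated by $h$ outside $B_{s_2 r_{k+1}}(z)$.

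Given such a uniform conditional lower bound, I would conclude by a martingale/supermartingale argument: letting $N_K$ be the number of $k \in \{1,\dots,K\}$ with $E_{r_k}$ occurring, the process obtained from $N_K - p' K$ is a submartingale with bounded increments, and more simply, $\BB P[N_K = 0] = \BB P[\bigcap_{k=1}^K E_{r_k}^c] \leq (1-p')^K$ by iterating the conditional bound. Setting $c$ and $a$ appropriately (with $a = \log(1/(1-p'))$, or for part~\eqref{item-annulus-iterate} choosing $p$ close enough to $1$ that $p'$ makes $\log(1/(1-p')) \geq a$) yields~\eqref{eqn-annulus-iterate}. For part~\eqref{item-annulus-iterate-pos}, we just take $a = \log(1/(1-p')) > 0$ directly. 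The main obstacle is making the approximate-independence step quantitative and uniform in $k$: one must show that the harmonic part of the field on $B_{s_2 r_k}(z)$, restricted to the inner annulus $A_{k+1}$, is so close to a constant that conditioning on it changes the probability of $E_{r_{k+1}}$ by at most a fixed amount depending only on $s_1, s_2$ — this is where the ratio constraint $r_{k+1}/r_k \leq s_1$ and the scale-invariance of the GFF modulo constants are essential, and it is exactly the content of~\cite[Lemma~3.1]{local-metrics}, which we are permitted to invoke.
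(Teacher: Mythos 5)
The paper does not actually prove this lemma; it is stated as ``a special case of~\cite[Lemma~3.1]{local-metrics}'' and the proof is left to that reference. So strictly speaking there is no in-paper proof to compare against. Your overall framework is nonetheless aligned with the standard argument in the literature: observe the annuli $A_k = B_{s_2 r_k}(z)\setminus B_{s_1 r_k}(z)$ are pairwise disjoint and nested, iterate the GFF Markov property from the outside in to obtain uniform conditional lower bounds on $\BB P[E_{r_{k+1}}\mid \mathcal F_k]$, and then multiply the conditional failure probabilities to get exponential decay in $K$. That skeleton is correct.

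The gap is in the quantitative step that makes ``approximate independence'' precise. You write that the harmonic extension restricted to the inner annulus ``is close to constant, hence has small effect on the event.'' This is not accurate under the stated hypotheses: the ratio constraint is only $r_{k+1}/r_k\le s_1$ with $s_1\in(0,1)$ \emph{fixed}, so the inner annulus $A_{k+1}\subset B_{s_1 s_2 r_k}(z)$ is a bounded multiplicative distance inside $B_{s_2 r_k}(z)$, not vanishingly deep. Consequently the harmonic part $\mathfrak h^k(\cdot)-\mathfrak h^k(z)$ on $A_{k+1}$ has variance of order $1$ (depending on $s_1,s_2$), not $o(1)$; it is not ``close to constant,'' and adding an $O(1)$-variance Gaussian shift to the field can a priori change the probability of an event by a constant amount. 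The correct mechanism is not smallness but \emph{quantitative absolute continuity}: the conditional law of the field on $A_{k+1}$ modulo constants, given $\mathcal F_k$, is absolutely continuous with respect to the unconditional law, with a Radon–Nikodym derivative whose moments (and tails) are controlled by constants depending only on $s_1,s_2$. One then applies H\"older/Cauchy–Schwarz to convert the unconditional lower bound $\BB P[E_{r_{k+1}}]\ge p$ into a conditional lower bound $p'=p'(p,s_1,s_2)$, valid off a bad $\mathcal F_k$-measurable event of controlled probability. This Radon–Nikodym step also quietly resolves a second issue in your sketch: for part~\eqref{item-annulus-iterate} you need $p'\to 1$ as $p\to 1$ (so that $\log\frac{1}{1-p'}$ can be made larger than any prescribed $a$), which does not follow from a ``bounded effect'' alone but does follow from a H\"older-type inequality $1-p' \le C(1-p)^\theta$ coming from a finite-moment bound on the Radon–Nikodym derivative. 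Making these two points explicit would close the gap; as written, the proposal waves at the crucial estimate and attributes it to a property (smallness of the harmonic part) that is not the one actually used.
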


We emphasize that the numbers $p$ and $c$ in assertion~\ref{item-annulus-iterate} and the numbers $a$ and $c$ is assertion~\ref{item-annulus-iterate-pos} do \emph{not} depend on $z$ or on $\{r_k\}$ (except via $s_1,s_2$). 
The idea of Lemma~\ref{lem-annulus-iterate} was first used in~\cite{mq-geodesics}, and the general version stated here was first formulated in~\cite{local-metrics}. 
To illustrate the use of Lemma~\ref{lem-annulus-iterate}, we will explain a typical application: a polynomial upper bound for the probability that a $D_h$-geodesic gets near a point.

\begin{lem} \label{lem-geo-hit}
For each $\gamma \in (0,2)$, there exists $\alpha = \alpha(\gamma ) > 0$ and $c=c(\gamma) > 0$ such that the following is true. 
For each $z\in \BB C$ and each $\ep > 0$, the probability that there is a $D_h$-geodesic between two points in $\BB C\setminus B_{\ep^{1/2}}(z)$ which enters $B_\ep(z)$ is at most $c \ep^\alpha$.
\end{lem}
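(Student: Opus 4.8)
The plan is to use Lemma~\ref{lem-annulus-iterate} with a sequence of dyadic-type radii between $\ep$ and $\ep^{1/2}$, together with a "shielding" event on each annulus which, when it occurs, forces any geodesic passing through the inner disk to be too long. Concretely, fix $z\in\BB C$; by translation invariance (Axiom~VI$'$ of the weak metric, or Axiom~\ref{item-metric-coord}) we may take $z=0$. Set $r_k = 2^{-k}$ and look at the scales $k$ ranging roughly over $\{1,\dots,K\}$ with $K \asymp \log_2(\ep^{-1/2})$, so that $B_{\ep^{1/2}}(0) \setminus B_\ep(0)$ contains order $K$ disjoint concentric dyadic annuli $A_k := B_{2 r_k}(0)\setminus B_{r_k}(0)$. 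For each $k$ let $E_{r_k}$ be the event that the $D_h$-distance across the annulus $A_k$ — that is, the $D_h$-internal distance in $A_k$ between its inner and outer boundary circles — is at least $2^{-k}\,e^{\xi h_{r_k}(0)} r_k^{\xi Q}$ times a large constant $M$; equivalently, after the scaling/Weyl-scaling normalization of~\eqref{eqn-metric-law}, that a normalized copy of $D_h$ assigns distance at least $M$ between the two boundary circles of the unit annulus $B_2\setminus B_1$. Since the normalized metric induces the Euclidean topology and the two circles are disjoint compact sets, this distance is a.s.\ positive, so by choosing $M$ small we can guarantee $\BB P[E_{r_k}]\geq p$ for any desired $p\in(0,1)$, uniformly in $k$ by the exact scaling relation~\eqref{eqn-metric-law} (in the weak-metric setting one uses tightness across scales, Axiom~V$'$, which still gives a uniform-in-$k$ lower bound once $M$ is small enough). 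Crucially $E_{r_k}$ is determined by $h - h_{r_k}(0)$ restricted to $A_k$, by the locality axiom.

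Next I would feed these events into Lemma~\ref{lem-annulus-iterate}\ref{item-annulus-iterate}: taking $a = 10$ (say), there is $p\in(0,1)$ so that if $\BB P[E_{r_k}]\geq p$ for all $k$ then with probability at least $1 - c\,e^{-aK}$ at least one $E_{r_k}$ occurs among $k\in\{1,\dots,K\}$. Since $K \asymp \frac12\log_2(\ep^{-1})$, the failure probability $c\,e^{-aK}$ is at most $c\,\ep^{\alpha}$ for a suitable $\alpha = \alpha(\gamma)>0$. So with probability $\geq 1 - c\ep^\alpha$, some annulus $A_{k}$, $r_k\in[\ep,\ep^{1/2}]$, is "hard to cross." The remaining point is purely deterministic: if a $D_h$-geodesic $P$ joins two points $x,y\in\BB C\setminus B_{\ep^{1/2}}(0)$ and enters $B_\ep(0)$, then (since $\ep < r_k$ and $\ep^{1/2} \geq 2 r_k$, i.e.\ $B_\ep(0)\subset B_{r_k}(0)$ and $x,y\notin B_{2r_k}(0)$) the path $P$ must cross the annulus $A_k$ at least twice — once going in and once coming out — so $\op{len}(P;D_h)$ is at least twice the $D_h$-distance across $A_k$. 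On the other hand, $D_h(x,y)$ is at most the $D_h$-distance across the much larger annulus $B_{\ep^{1/2}}(0)$ going around $B_\ep(0)$... actually more simply: if $E_{r_k}$ holds and $M$ is chosen larger than a constant depending only on a comparison of scales, the crossing cost $2M\cdot 2^{-k}e^{\xi h_{r_k}(0)} r_k^{\xi Q}$ exceeds the cost of a competing path from $x$ to $y$ that detours around $B_{\ep^{1/2}/2}(0)$ through a slightly larger region, contradicting that $P$ is a geodesic. Thus on this event no geodesic between points outside $B_{\ep^{1/2}}(0)$ enters $B_\ep(0)$, giving the bound.

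To make the final deterministic comparison clean I would, in $E_{r_k}$, bundle together two sub-events on \emph{two} nearby annuli: (i) crossing the inner annulus $B_{2 r_k}(0)\setminus B_{r_k}(0)$ is expensive (cost $\geq M_1$ in normalized units), and (ii) there is a path around $B_{r_k}(0)$ inside $B_{4 r_k}(0)\setminus B_{r_k}(0)$ of normalized length $\leq M_2$ with $M_2 \ll M_1$; both have probability bounded below uniformly in $k$ by the scaling relation and the topology-inducing property, so their intersection does too after shrinking $p$, and it is still measurable w.r.t.\ $h - h_{r_k}(0)$ on a fixed annulus. Then a geodesic entering $B_\ep(0)\subset B_{r_k}(0)$ with endpoints outside $B_{\ep^{1/2}}(0)\supset B_{4 r_k}(0)$ could be shortcut using the around-path from (ii), saving at least $M_1 - M_2 > 0$ in normalized cost, contradicting minimality. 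The main obstacle is precisely calibrating the constants $M_1, M_2$ and verifying the uniform-in-$k$ lower bound on $\BB P[E_{r_k}]$ in the weak-metric generality (where one only has tightness across scales rather than exact scale invariance); one handles this by passing to subsequential limits in Axiom~V$'$, noting that every such limit induces the Euclidean topology, so the relevant crossing/around distances are a.s.\ positive resp.\ finite, hence bounded below/above with probability close to $1$ uniformly along the sequence.
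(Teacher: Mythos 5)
Your overall strategy is the right one and matches the paper's: define a sequence of shielding events at geometrically spaced scales $r_k \in [\ep, \ep^{1/2}]$, each measurable with respect to $h$ minus its circle average on a concentric annulus, invoke Lemma~\ref{lem-annulus-iterate}\ref{item-annulus-iterate-pos} to get that at least one occurs except on an event of probability $O(\ep^\alpha)$, and then run a deterministic shortcut argument. However, there are two gaps in the write-up, one fixable and one more fundamental.

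First, your around-path lives in $B_{4r_k}(0)\setminus B_{r_k}(0)$, which overlaps your crossing annulus $B_{2r_k}(0)\setminus B_{r_k}(0)$. The shortcut argument needs the geodesic segment between its first and last hits of the around-path $\pi$ to fully cross $B_{2r_k}\setminus B_{r_k}$; but if $\pi$ hugs $\partial B_{r_k}(0)$, the first and last hitting points may both lie well inside $B_{2r_k}(0)$, so that segment need not cross the annulus and you get no lower bound on its length. The fix is to force the around-path into an annulus strictly outside the crossing annulus, e.g.\ $B_{3r}\setminus B_{2r}$ while the crossing annulus is $B_{2r}\setminus B_r$, which is exactly what the paper's event~\eqref{eqn-hit-event} does: $D_h(\text{around }B_{3r}(z)\setminus B_{2r}(z)) < D_h(\text{across }B_{2r}(z)\setminus B_r(z))$. (Relatedly, with $r_{k+1}/r_k = 1/2$ and annuli of outer-to-inner radius ratio $4$, consecutive annuli overlap, which Lemma~\ref{lem-annulus-iterate} does not allow; use a ratio like $4^{-1}$ as in the paper.)

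Second, and more seriously, your justification that $\BB P[E_{r_k}]$ is bounded below is not correct. You want the conjunction of $\{\text{across}\ge M_1\}$ and $\{\text{around}\le M_2\}$ with $M_2 < M_1$, and you argue that each piece has high probability so the intersection does too. But those thresholds pull in opposite directions: making $\BB P[\text{across}\ge M_1]$ close to $1$ requires $M_1$ \emph{small}, while making $\BB P[\text{around}\le M_2]$ close to $1$ requires $M_2$ \emph{large} — giving $M_2 > M_1$, the opposite of what the shortcut needs. (Note that for an approximately Euclidean field one has $\text{across} < \text{around}$, so the event you need is genuinely "atypical" and its positivity is not automatic.) The paper sidesteps the normalization entirely by taking the event to be the ratio statement $\{\text{around} < \text{across}\}$, which is automatically invariant under adding a constant to $h$ and hence measurable with respect to $h - h_{4r}(z)$ on the annulus without any explicit scaling factor; the positivity $\BB P[E_1(0)]>0$ is then established by the bump-function trick of Section~\ref{sec-bump} (subtract a smooth bump supported in the outer annulus to make the around-distance as small as you like without affecting the across-distance). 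That bump-function step is the genuine idea your proposal is missing.
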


Roughly speaking, Lemma~\ref{lem-geo-hit} says that ``most" points in $\BB C$ are not hit by $D_h$-geodesics except at their endpoints. 
Lemma~\ref{lem-geo-hit} immediately implies that the Hausdorff dimension of every LQG geodesic w.r.t.\ the Euclidean metric is strictly less than 2. 
Similar (but more complicated) ideas to the ones in the proof of Lemma~\ref{lem-geo-hit} are used in the proof of confluence of geodesics in~\cite{gm-confluence,dg-confluence}.

Let us now proceed with the proof of Lemma~\ref{lem-geo-hit}.
The first step is to define the events for which we will apply Lemma~\ref{lem-annulus-iterate}.  
To lighten notation, we introduce the following terminology.

\begin{defn} \label{def-across-around}
For a Euclidean annulus $A\subset \BB C$, we define $D_h(\text{across $A$})$ to be the $D_h$-distance between the inner and outer boundaries of $A$. We define $D_h(\text{around $A$})$ to be the infimum of the $D_h$-lengths of paths in $A$ which separate the inner and outer boundaries of $A$.
\end{defn}

Both $D_h(\text{across $A$})$ and $D_h(\text{around $A$})$ are determined by the internal metric of $D_h$ on $A$, so by Axiom~\ref{item-metric-local} these quantities are a.s.\ determined by $h|_A$.  

For $z\in\BB C$ and $r> 0$, let
\eqb  \label{eqn-hit-event}
E_r(z) := \left\{ D_h\left(\text{around $B_{3r}(z) \setminus B_{2r}(z)$}\right)  <  D_h\left(\text{across $B_{2r}(z) \setminus B_r(z)$} \right) \right\} .
\eqe
As noted above, Axiom~\ref{item-metric-local} implies that $E_r(z)$ is a.s.\ determined by $h|_{B_{3r}(z) \setminus B_r(z)}$. In fact, adding a constant to $h$ results in scaling $D_h$-distances by a constant (Axiom~\ref{item-metric-f}), so adding a constant to $h$ does not affect whether $E_r(z)$ occurs. Hence $E_r(z)$ is a.s.\ determined by $(h - h_{4r}(z)) |_{B_{3r}(z) \setminus B_r(z)}$. 

\begin{lem} \label{lem-hit-prob}
There exists $\alpha = \alpha(\gamma) > 0$ and $c = c(\gamma ) > 0$ such that for each $z\in\BB C$ and each $\ep > 0$, 
\eqbn
\BB P\left[ \text{$\exists r \in \left[\ep , \frac14 \ep^{1/2}\right]$ such that $E_r(z)$ occurs}  \right] \geq 1 - c \ep^\alpha .
\eqen
\end{lem}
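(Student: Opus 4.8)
The plan is to apply Lemma~\ref{lem-annulus-iterate} with the events $E_{r_k}(z)$ defined in~\eqref{eqn-hit-event}, along a geometric sequence of radii $r_k$ between $\ep$ and $\frac14\ep^{1/2}$. First I would fix a scale ratio, say $s_1 = 1/9$ and $s_2 = 1/3$ (so that the annuli $B_{3r}(z)\setminus B_r(z)$ are contained in $B_{s_2 r_k'}(z)\setminus B_{s_1 r_k'}(z)$ for an appropriate reindexing $r_k' = 9 r_k$; one just needs the relevant annuli to be nested and disjoint, which forces $r_{k+1}/r_k$ to be a small enough constant). Set $r_k = 9^{-k} \cdot \frac14 \ep^{1/2}$ for $k = 1, \dots, K$ where $K \asymp \log(\ep^{-1/2})$ is the largest integer with $r_K \geq \ep$; then $K \geq c_0 \log(1/\ep)$ for a constant $c_0 = c_0(\gamma) > 0$. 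As already noted after~\eqref{eqn-hit-event}, $E_r(z)$ is a.s.\ determined by $(h - h_{4r}(z))|_{B_{3r}(z)\setminus B_r(z)}$, which is exactly the kind of annular, additive-constant-invariant event to which Lemma~\ref{lem-annulus-iterate} applies.

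The key input I need is a uniform lower bound $\BB P[E_r(z)] \geq p_0 > 0$ for all $z \in \BB C$ and all $r > 0$, with $p_0$ depending only on $\gamma$. By the exact scaling and translation invariance of the law of $h$ modulo additive constant (i.e.~\eqref{eqn-gff-law}) together with Axioms~\ref{item-metric-f} and~\ref{item-metric-coord} (equivalently the consequence~\eqref{eqn-metric-law}), the probability of $E_r(z)$ does not depend on $z$ or $r$, so it suffices to bound $\BB P[E_1(0)]$ from below. That is, I must show that with positive probability the $D_h$-distance around the annulus $B_3(0)\setminus B_2(0)$ is smaller than the $D_h$-distance across $B_2(0)\setminus B_1(0)$. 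This is precisely the kind of statement that the ``adding a bump function'' technique of Section~\ref{sec-bump} is designed to prove: both quantities are a.s.\ positive and finite, so one can find a constant $C$ with $\BB P[D_h(\text{around }B_3(0)\setminus B_2(0)) \leq C,\ D_h(\text{across }B_2(0)\setminus B_1(0)) \geq C^{-1}]$ — wait, that is not quite an event of the right form; instead pick $C$ with $\BB P[D_h(\text{around }B_3\setminus B_2) \leq C] \geq 3/4$ and $\BB P[D_h(\text{across }B_2\setminus B_1) \geq C] \geq 3/4$ after first decreasing the metric on the outer annulus via a bump function supported in $B_3\setminus B_2$ which is a large negative constant on $B_{3}\setminus B_2$ (shrinking ``around'' distances) and, if needed, a bump function supported in $B_2\setminus B_1$ which is a large positive constant there (inflating ``across'' distances). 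Since these two bump functions have disjoint supports, after adding their sum $\phi$ to $h$ the two events become simultaneously likely, so $\BB P[E_1(0) \text{ for } h+\phi] \geq 1/2$, and absolute continuity of the laws of $h+\phi$ and $h$ gives $\BB P[E_1(0)] \geq p_0$ for some $p_0 = p_0(\gamma) > 0$.

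With the uniform lower bound $\BB P[E_{r_k}(z)] \geq p_0$ in hand, assertion~\ref{item-annulus-iterate-pos} of Lemma~\ref{lem-annulus-iterate} (applied with $p = p_0$) produces constants $a = a(p_0, s_1, s_2) > 0$ and $c' = c'(p_0,s_1,s_2) > 0$ such that
\eqbn
\BB P\left[ \exists k \in \{1,\dots,K\} \text{ such that } E_{r_k}(z) \text{ occurs} \right] \geq 1 - c' e^{-a K}.
\eqen
Since $K \geq c_0 \log(1/\ep)$, we get $c' e^{-aK} \leq c' \ep^{a c_0}$, so the claimed bound holds with $\alpha := a c_0$ and $c := c'$, both depending only on $\gamma$ (note each $r_k$ lies in $[\ep, \frac14\ep^{1/2}]$ by construction). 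The main obstacle is the positive-probability estimate $\BB P[E_1(0)] \geq p_0$: one has to be a little careful that the bump-function perturbations used to shrink the ``around'' distance and inflate the ``across'' distance have disjoint supports and do not interfere, and that the resulting perturbed events are genuinely of the annular form required — but this is routine given Weyl scaling and the Markov/locality properties of the GFF and $D_h$. Everything else is bookkeeping with the geometric sequence of radii and a direct invocation of Lemma~\ref{lem-annulus-iterate}.
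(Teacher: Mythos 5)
Your proposal is correct and follows the same route as the paper: establish the uniform positive lower bound $\BB P[E_1(0)] > 0$ via a bump-function perturbation, use the scale/translation invariance~\eqref{eqn-metric-law} to deduce that $\BB P[E_r(z)]$ is independent of $z$ and $r$, and apply part~\ref{item-annulus-iterate-pos} of Lemma~\ref{lem-annulus-iterate} along a geometric sequence of roughly $\log(1/\ep)$ radii in $[\ep, \tfrac14 \ep^{1/2}]$. One small cleanup: a smooth function cannot be both supported in and constant on the open annulus $B_3\setminus B_2$, and you do not need two separate perturbations — a single non-negative bump $\phi$ supported in, say, $B_{2.9}\setminus B_{2.1}$ and equal to a large constant on a slightly smaller concentric annulus already shrinks $D_{h-\phi}(\text{around } B_3\setminus B_2)$ (take the near-minimal loop where $\phi$ is large) while leaving $D_{h-\phi}(\text{across } B_2\setminus B_1) = D_h(\text{across } B_2\setminus B_1)$ unchanged, which together with absolute continuity yields $\BB P[E_1(0)]>0$ exactly as the paper intends.
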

\begin{proof}
Using a ``subtracting a bump function" argument as discussed in Section~\ref{sec-bump}, one can show that $p := \BB P[E_1(0)] > 0$.
From~\eqref{eqn-metric-law}, we see $\BB P[E_r(z)]$ does not depend on $z$ or $r$. 
Hence $\BB P[E_r(z)] = p$ for each $z\in\BB C$ and $r>0$. We now apply Lemma~\ref{lem-annulus-iterate} with $r_k = 4^{-k} \ep^{1/2}$ and $K = \lfloor \frac12 \log_4 \ep^{-1} \rfloor$. Then $r_k  \in [\ep , \frac14 \ep^{1/2}]$ for each $k\in \{1,\dots,K\}$, so part~\ref{item-annulus-iterate-pos} of Lemma~\ref{lem-annulus-iterate} shows that there exists $a = a(\gamma) > 0$ and $c = c(\gamma) > 0$ such that 
\eqbn
\BB P\left[ \text{$\exists r \in [\ep , \ep^{1/2}]$ such that $E_r(z)$ occurs}  \right] \geq 1 - c p^{a K}  .
\eqen 
This last quantity is at least $1 - c \ep^\alpha$ for an appropriate $\alpha >0$ depending on $p,a$ (hence on $\gamma$).
\end{proof}

\begin{proof}[Proof of Lemma~\ref{lem-geo-hit}]
By Lemma~\ref{lem-hit-prob}, it suffices to show that if there is an $r \in [\ep , \frac14 \ep^{1/2}]$ such that $E_r(z)$ occurs, then no $D_h$-geodesic between two points in $\BB C\setminus B_{\ep^{1/2}}(z)$ can enter $B_\ep(z)$. 
Indeed, assume that $E_r(z)$ occurs, let $u,v\in\BB C\setminus B_{\ep^{1/2}}(z)$, and let $P$ be a path from $u$ to $v$ which hits $B_r(z) \supset B_\ep(z)$. We will show that $P$ is not a $D_h$-geodesic. By the definition~\eqref{eqn-hit-event} of $E_r(z)$, there is a path $\pi$ in $B_{3r}(z) \setminus B_{2r}(z)$ which disconnects the inner and outer boundaries of this annulus and has $D_h$-length strictly less than $D_h(\text{across $B_{2r}(z) \setminus B_r(z)$})$. Let $\sigma$ (resp.\ $\tau$) be the first (resp.\ last) time that $P$ hits $\pi$. Since $P$ hits $B_r(z)$ and $u,v \notin B_{3r}(z)$, the path $P$ crosses between the inner and outer boundaries of $B_{2r}(z) \setminus B_r(z)$ between times $\sigma$ and $\tau$. Hence 
\eqb
\left(\text{$D_h$-length of $P|_{[\sigma,\tau]}$}\right)  \geq D_h(\text{across $B_{2r}(z) \setminus B_r(z)$}) .
\eqe
But, since $P(\tau) , P(\sigma) \in \pi$, 
\allb
D_h(P(\sigma),P(\tau)) 
\leq \left(\text{$D_h$-length of $\pi$} \right)  
&< D_h(\text{across $B_{2r}(z) \setminus B_r(z)$}) \notag\\ 
&\leq \left(\text{$D_h$-length of $P|_{[\sigma,\tau]}$}\right) .
\alle
This implies that $P$ is not a $D_h$-geodesic since it is not the $D_h$-shortest path from $P(\sigma)$ to $P(\tau)$.
\end{proof}

\subsection{White noise decomposition}
\label{sec-perc}

A convenient way to approximate the GFF is by convolving the heat kernel with a space-time white noise. 
To explain this, let $W$ be a space-time white noise on $\BB C\times [0,\infty)$, i.e., $\{(W,f) : f\in L^2(\BB C\times [0,\infty))\}$ is a centered Gaussian process with covariances $\BB E[(W,f) (W,g) ]  = \int_\BB C\int_0^\infty f(z,s) g(z,s) \,ds \, dz$. For $f\in L^2(\BB C\times [0,\infty))$ and Borel measurable sets $A\subset\BB C$ and $I\subset [0,\infty)$, we slightly abuse notation by writing 
\eqbn
\int_A\int_I f(z,s) \, W(dz,ds) := (W , f \BB 1_{A\times I} ) .
\eqen

As in~\eqref{eqn-gff-convolve}, we denote the heat kernel by $p_t(z) := \frac{1}{2\pi t} e^{-|z|^2/2t}$.
Following~\cite[Section 3]{ding-goswami-watabiki}, we define the centered Gaussian process
\eqb \label{eqn-wn-decomp}
\wh h_t (z) := \sqrt\pi \int_{\BB C} \int_{t^2}^1 p_{s/2}(z-w) \, W(dw,ds)  ,\quad \forall t \in [0,1] , \quad \forall z\in \BB C .
\eqe 
We write $\wh h  := \wh h_0$. 
By~\cite[Lemma 3.1]{ding-goswami-watabiki} and Kolmogorov's criterion, each $\wh h_t$ for $t \in (0,1]$ admits a continuous modification.  
The process $\wh h$ does not admit a continuous modification, but makes sense as a distribution: indeed, it is easily checked that its integral against any smooth compactly supported test function is Gaussian with finite variance.
  
The process $\wh h$ is in some ways more convenient to work with than the GFF thanks to the following symmetries, which are immediate from the definition. 
\begin{itemize}
\item \textit{Rotation/translation/reflection invariance.} The law of $\{\wh h_t : t\in [0,1]  \}$ is invariant with respect to rotation, translation, and reflection of the plane.
\item \textit{Scale invariance.} For $\delta \in (0,1]$, one has $\{(\wh h_{\delta t } - \wh h_\delta)(\delta \cdot)  : t \in [0,1]  \} \eqD \{\wh h_t : t\in [0,1]\}$. 
\item \textit{Independent increments.} If $0 \leq t_1\leq t_2 \leq t_3 \leq t_4 \leq 1$, then $\wh h_{t_2} - \wh h_{t_1}$ and $\wh h_{t_4} - \wh h_{t_3}$ are independent. 
\end{itemize}
 
One property which $\wh h$ does not possess is spatial independence. To get around this, it is sometimes useful to work with a truncated variant of $\wh h $ where we only integrate over a ball of finite radius. To this end, we let $\phi : \BB C\rta[0,1]$ be a smooth bump function which is equal to 1 on the ball $B_{1/20}(0)$ and which vanishes outside of $B_{1/10}(0)$. For $t\in [0,1]$, we define 
\eqb \label{eqn-wn-truncate}
\wh h_t^\tr(z) := \sqrt\pi \int_{t^2}^1 \int_{\BB C} p_{s/2}(z-w) \phi(z-w) \, W(dw,dt) .
\eqe 
We also set $\wh h^\tr := \wh h^\tr_0$. 
As in the case of $\wh h$, it is easily seen from the Kolmogorov continuity criterion that each $\wh h^\tr_t$ for $t\in (0,1]$ a.s.\ admits a continuous modification.
The process $\wh h^\tr$ does not admit a continuous modification and is instead viewed as a random distribution.  

The key property enjoyed by $\wh h^\tr$ is spatial independence: if $A,B\subset \BB C$ with $\op{dist}(A,B) \geq 1/5$, then $\{\wh h^\tr_t|_A : t\in [0,1]\}$ and $\{\wh h^\tr_t|_B : t\in [0,1]\}$ are independent. Indeed, this is because $\{\wh h^\tr_t|_A : t\in [0,1]\}$ and $\{\wh h^\tr_t|_B : t\in [0,1]\}$ are determined by the restrictions of the white noise $W$ to the disjoint sets $B_{1/10}(A) \times \BB R_+$ and $B_{1/10}(B)\times \BB R_+$, respectively.  
Unlike $\wh h$, the distribution $\wh h^\tr$ does not possess any sort of scale invariance but its law is still invariant with respect to rotations, translations, and reflections of $\BB C$.   

The following lemma, which is proven in the same manner as~\cite[Lemma 3.1]{dg-lqg-dim}, tells us that the distributions $\wh h$ and $\wh h^\tr$ and the whole-plane GFF can all be compared up to constant-order additive errors. 

\begin{lem} \label{lem-gff-compare} 
Suppose $U\subset \BB C$ is a bounded open set. There is a coupling $(h ,  \wh h , \wh h^\tr)$ of a whole-plane GFF normalized so that $h_1(0) = 0$ and the fields from~\eqref{eqn-wn-decomp} and~\eqref{eqn-wn-truncate} such that the following is true. For any $h^1,h^2 \in \{h ,  \wh h , \wh h^\tr\}$, the distribution $(h^1-h^2)|_U$ a.s.\ admits a continuous modification and there are constants $c_0,c_1 > 0$ depending only on $U$ such that for $A>1$, 
\eqb \label{eqn-gff-compare}
\BB P\left[\max_{z\in U} |(h^1-h^2)(z)| \leq A \right] \geq 1 - c_0 e^{-c_1 A^2} .
\eqe 
\end{lem}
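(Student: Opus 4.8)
The plan is to realize all three objects on a common probability space and then compare the differences pairwise. The cleanest way to do this is to build everything from the same space-time white noise $W$ on $\BB C\times[0,\infty)$. The fields $\wh h$ and $\wh h^\tr$ are already defined as explicit functionals of $W$ in~\eqref{eqn-wn-decomp} and~\eqref{eqn-wn-truncate}. For the whole-plane GFF, one uses the standard fact (see, e.g.,~\cite{ding-goswami-watabiki}) that $\wh h' := \sqrt\pi \int_{\BB C}\int_0^\infty p_{s/2}(\cdot - w)\,W(dw,ds)$, suitably interpreted, is (a multiple of) a whole-plane GFF modulo additive constant; after fixing the normalization $h_1(0)=0$ one obtains an honest whole-plane GFF $h$ coupled to $W$. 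So in fact $h$ differs from $\wh h = \wh h_0$ only by the ``low-frequency'' part of the white noise integral over times $s\in[1,\infty)$ (plus the normalization constant), and $\wh h$ differs from $\wh h^\tr$ only by the spatial truncation $\phi$. The point is that in each case the difference is a Gaussian field built from $W$ whose covariance kernel is \emph{smooth} on the region of interest.

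Concretely, the first step is to write down, for each pair $h^1,h^2\in\{h,\wh h,\wh h^\tr\}$, the difference $h^1-h^2$ as $\sqrt\pi\int_{\BB C}\int_0^\infty K(z,w,s)\,W(dw,ds)$ for an appropriate kernel $K$: for $\wh h - \wh h^\tr$ one has $K(z,w,s)=p_{s/2}(z-w)(1-\phi(z-w))\BB 1_{[0,1]}(s)$; for $h-\wh h$ the kernel is $p_{s/2}(z-w)\BB 1_{[1,\infty)}(s)$ (modulo the additive normalization, which is itself Gaussian with finite variance); and for $h-\wh h^\tr$ one combines the two. In every case the resulting covariance function $G_{12}(z,z') = \pi\int_{\BB C}\int_0^\infty K(z,w,s)K(z',w,s)\,ds\,dw$ is finite and $C^\infty$ for $z,z'$ ranging over the bounded set $U$ (and in fact over any fixed compact set): the short-time singularity of the heat kernel is killed either by the cutoff $1-\phi$, which vanishes near the diagonal, or by the restriction to $s\ge 1$, which makes $p_{s/2}$ smooth and bounded. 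Hence $(h^1-h^2)|_U$ is a centered Gaussian field with smooth covariance, so by Kolmogorov's continuity criterion it admits a continuous (indeed a.s.\ $C^\infty$) modification, and $\sup_{z\in U}\BB E[(h^1-h^2)(z)^2]$ is bounded by a constant depending only on $U$.

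The second step is the tail bound~\eqref{eqn-gff-compare}. Having shown that $X:=(h^1-h^2)|_U$ is a continuous centered Gaussian field on the (pre)compact set $\ol U$ with uniformly bounded variance and smooth covariance, one applies the Borell--TIS inequality: $\BB E[\sup_{z\in U}|X(z)|]<\infty$ (finiteness follows from Dudley's entropy bound, using the Lipschitz-in-$L^2$ continuity of $z\mapsto X(z)$ coming from smoothness of the covariance), and $\BB P[\sup_{z\in U}|X(z)| > \BB E[\sup_{z\in U}|X(z)|] + \lambda] \le 2 e^{-\lambda^2/(2\sigma^2)}$ where $\sigma^2 = \sup_{z\in U}\BB E[X(z)^2]$. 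Choosing $A$ large enough that $A - \BB E[\sup|X|] \ge A/2$, this gives $\BB P[\sup_{z\in U}|X(z)| \le A] \ge 1 - 2e^{-A^2/(8\sigma^2)}$, which is~\eqref{eqn-gff-compare} with $c_0=2$ and $c_1 = 1/(8\sigma^2)$, both depending only on $U$ (through $\sigma$ and $\BB E[\sup|X|]$).

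The main obstacle, and the only place requiring genuine care, is the bookkeeping around the whole-plane GFF: one must check that the white-noise construction really does produce a whole-plane GFF with the stated covariance $G(z,w)$ after normalizing $h_1(0)=0$, and must correctly account for the additive normalization constant when writing $h - \wh h$ (this constant is a mean-zero Gaussian of finite variance, so it is harmless, but it must be tracked). Everything else — smoothness of the difference kernels, Kolmogorov, Borell--TIS — is routine once the kernels are written down, and indeed this is exactly the argument of~\cite[Lemma 3.1]{dg-lqg-dim}, which we are told we may follow.
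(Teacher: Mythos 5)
Your proposal is correct and matches the approach the paper points to via \cite[Lemma 3.1]{dg-lqg-dim}: realize $h$, $\wh h$, and $\wh h^\tr$ as functionals of a single space-time white noise $W$, note that each pairwise difference is a centered Gaussian field whose variance increments $\BB E[(X(z)-X(z'))^2]$ are smooth and vanish to second order along the diagonal (the on-diagonal heat-kernel singularity being removed either by the factor $1-\phi$ or by restricting to $s\ge 1$), and then invoke Kolmogorov for the continuous modification and Borell--TIS for the Gaussian tail. The only cosmetic fix is the range of $A$: your choice $c_0=2$ yields~\eqref{eqn-gff-compare} only once $A \ge 2\,\BB E\bigl[\sup_U|h^1-h^2|\bigr]$, and for $1<A$ below that threshold one simply enlarges $c_0$ so that $1-c_0 e^{-c_1 A^2}\le 0$, making the bound vacuous there.
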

 
Lemma~\ref{lem-gff-compare} implies that each of $\wh h$ and $\wh h^\tr$ is a GFF plus a continuous function. Hence we can define the LQG metrics $D_{\wh h}$ and $D_{\wh h^\tr}$. The metric $D_{\wh h^\tr}$ is particularly convenient to work with due to the aforementioned finite range of dependence property of $\wh h^\tr$. 
This property allows one to use percolation-style arguments in order to produce large clusters of Euclidean squares where certain ``good" events occur. We refer to~\cite{ding-goswami-watabiki,dzz-heat-kernel,dg-lqg-dim,gms-poisson-voronoi} for examples of this sort of argument. 

The white noise decomposition also plays a key role in the proofs of tightness of LFPP in~\cite{ding-dunlap-lqg-fpp,df-lqg-metric,dddf-lfpp,dg-supercritical-lfpp}. In fact, these papers first prove tightness of LFPP defined using the white noise decomposition~\eqref{eqn-wn-decomp} in place of the functions $h_\ep^*$, then transfer to $h_\ep^*$ using a comparison lemma which is similar in spirit to Lemma~\ref{lem-gff-compare} (see~\cite[Section 6.1]{dddf-lfpp}).

\section{Open problems}
\label{sec-open-problems}

Here we highlight some of the most important open problems concerning the LQG metric. Much more substantial lists of open problems can be found in~\cite{gm-uniqueness,ghpr-central-charge}.

\begin{prob} \label{prob-d}
For $\gamma \in (0,2)$, compute the Hausdorff dimension $d_\gamma$ of $\BB C$, equipped with the $\gamma$-LQG metric. 
More generally, for $\xi > 0$ determine the relationship between the parameters $Q$ and $\xi$ of~\eqref{eqn-Q-def}. 
\end{prob}

Due to~\eqref{eqn-xi} and~\eqref{eqn-Q-subcrit}, computing $d_\gamma$ for $\gamma \in (0,2)$ is equivalent to finding the relationship between $Q$ and $\xi$ for $\xi \in (0,2/d_2)$. As noted above, the only known case is $d_{\sqrt{8/3}}=4$, equivalently $Q(1/\sqrt 6) = 5/\sqrt 6$. One indication of the difficulty of computing $Q$ in terms of $\xi$ is that the relationship between $Q$ and $\xi$ is not universal for LFPP defined using different log-correlated Gaussian fields~\cite{dzz-nonuniversality}. 

Many quantities associated with LQG surfaces and random planar maps can be expressed in terms of $d_\gamma$ (or $\xi$ and $Q$), such as the optimal H\"older exponents relating the LQG metric and the Euclidean metric~\cite{lqg-metric-estimates}, the Hausdorff dimension of the boundary of an LQG metric ball~\cite{gwynne-ball-bdy}, and the ball volume exponent for certain random planar maps~\cite{dg-lqg-dim}. Solving Problem~\ref{prob-d} would lead to exact formulas for these quantities. 
 
We do not have a guess for the formula relating $Q$ and $\xi$, nor do we know whether an explicit formula exists. The best-known prediction from the physics literature, due to Watabiki~\cite{watabiki-lqg}, is equivalent to $ Q = 1/\xi - \xi$ for $\xi \in (0,2/d_2)$. The prediction was proven to be false in~\cite{ding-goswami-watabiki}, at least for small values of $\xi$ (equivalently, small values of $\gamma$). An alternative proposal, put forward in~\cite{dg-lqg-dim}, is that $Q = 1/\xi - 1/\sqrt 6$ for $\xi \in (0,2/d_2)$. This formula has not been disproven for any value of $\xi \in (0,2/d_2)$, but it (like Watabiki's prediction) is inconsistent with the result of~\cite{lfpp-pos}, which shows that $Q > 0$ for all $\xi > 0$. We expect that both of the above predictions are false for all but finitely many values of $\xi$. 

The best known rigorous bounds relating $\xi$ and $Q$ are obtained in~\cite{dg-lqg-dim,gp-lfpp-bounds,ang-discrete-lfpp}. See Figure~\ref{fig-d-bound} for a graph of these bounds.

\begin{figure}[ht!]
\begin{center}
\includegraphics[width=0.4\textwidth]{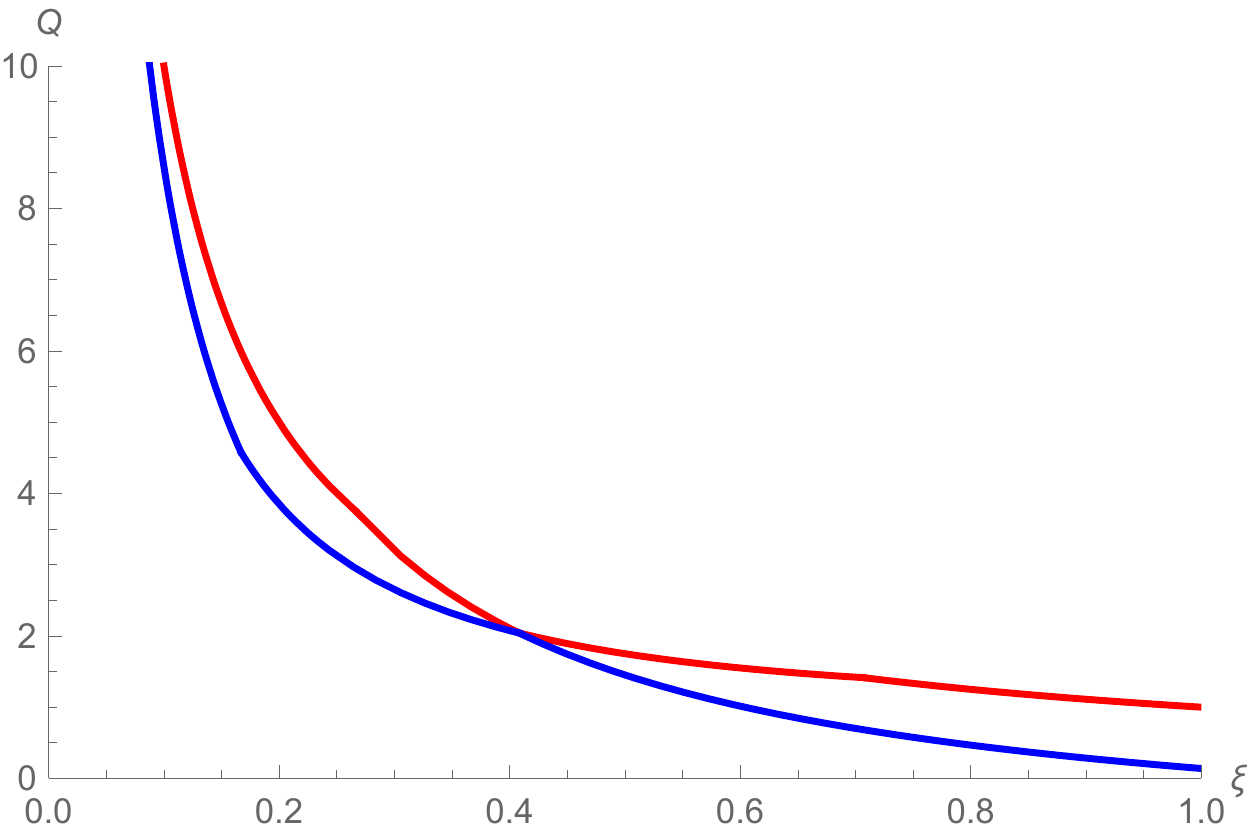} \hspace{10pt}
\includegraphics[width=0.4\textwidth]{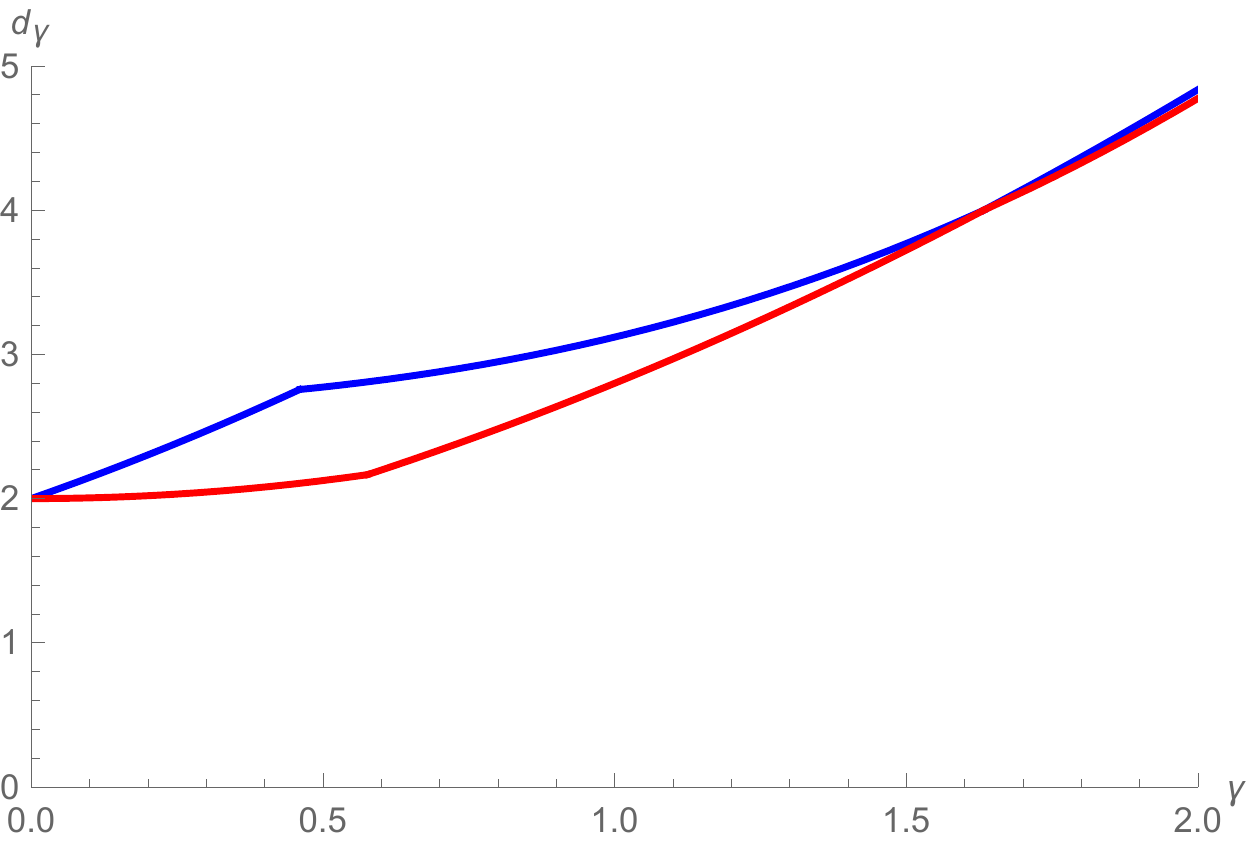} 
\caption{\label{fig-d-bound} \textbf{Left:} Plot of the best known upper (blue) and lower (red) bounds for $Q$ as a function of $\xi$. 
\textbf{Right:} Plot of the best-known bounds for $d_\gamma$ as a function of $\gamma$. 
}
\end{center}
\end{figure}

Our next open problem concerns the relationship between LQG surfaces and random planar maps. 

\begin{prob} \label{prob-map}
Show that for each $\gamma\in (0,2]$, appropriate types of random planar maps, equipped with their graph distance (appropriately rescaled), converge in the Gromov-Hausdorff sense to $\gamma$-LQG surfaces equipped with the $\gamma$-LQG metric.
\end{prob}

As discussed in Section~\ref{sec-motivation}, the value of $\gamma$ depends on the type of random planar map under consideration. For example, uniform random planar maps correspond to $\gamma=\sqrt{8/3}$, planar maps weighted by the number of spanning trees they admit correspond to $\gamma=\sqrt 2$, and planar maps weighted by the partition function of the critical Ising model on the map correspond to $\gamma=\sqrt 3$. So far, Problem~\ref{prob-map} has only been solved for $\gamma =\sqrt{8/3}$, see Section~\ref{sec-lqg-tbm}. 

Problem~\ref{prob-map} can be made more precise by specifying the scaling factor for the planar maps as well as the particular types of LQG surfaces one should get in the limit.
For concreteness, for $n\in\BB N$ consider the case of a random planar map $M_n$ with the topology of the sphere, having $n$ total edges. Then $M_n$, equipped with its graph distance re-scaled by $n^{-1/d_\gamma}$, should converge in the Gromov-Hausdorff sense to the quantum sphere, a special type of LQG surface which is defined in~\cite{wedges,dkrv-lqg-sphere} (the definitions are proven to be equivalent in~\cite{ahs-sphere}). Similar statements apply for random planar maps with other topologies, such as the disk, plane, or half-plane. 

Finally, we mention a third open problem which has not appeared elsewhere. For $\alpha \in\BB R$, let $\mcl T_h^\alpha$ be the set of $\alpha$-thick points of $h$, i.e., the points $z\in\BB C$ for which $\limsup_{\ep\rta 0} h_\ep(z)  / \log\ep^{-1} = \alpha$. Such points exist if and only if $\alpha \in [-2,2]$~\cite{hmp-thick-pts} For a set $X$, the function which takes $\alpha$ to the Hausdorff dimension of $X\cap \mcl T_h^\alpha$ (w.r.t.\ the LQG metric or the Euclidean metric) can be thought of as a sort of ``quantum multifractal spectrum" of $X$. 

\begin{prob} \label{prob-geodesic}
Let $\xi > 0$ and let $P  $ be a $D_h$-geodesic. 
Is it possible to compute the Hausdorff dimensions of $P\cap T_h^\alpha$ for each $\alpha \in [-2,2]$ with respect to the $D_h$ (resp.\ the Euclidean metric)? 
More weakly, as there a unique value of $\alpha$ which maximizes this dimension? 
In other words, is there a ``typical" thickness for a point on an LQG geodesic? 
\end{prob}

It is known that the Hausdorff dimensions considered in Problem~\ref{prob-geodesic} are a.s.\ equal to deterministic constants, see~\cite[Remark 1.12]{lqg-zero-one}. 
The analog of Problem~\ref{prob-geodesic} for a subcritical LQG metric ball boundary has been solved in~\cite{dg-lqg-dim,lqg-zero-one}. In that case, the maximizing value of $\alpha$ with respect to the Euclidean (resp.\ LQG) metric is $\alpha = \xi$ (resp.\ $\alpha = \gamma$).
One can also ask the analog of Problem~\ref{prob-geodesic} with Minkowski dimension instead of Hausdorff dimension. We expect that the answers will be the same.

\bibliography{cibib}
\bibliographystyle{hmralphaabbrv}

\end{document}